\documentclass{amsart}
\usepackage{amsfonts}
\usepackage{hyperref}

\newtheorem{theorem}{Theorem}[section]
\theoremstyle{plain}

\newtheorem{conjecture}{Conjecture}
\newtheorem{corollary}{Corollary}[section]

\newtheorem{definition}{Definition}
\newtheorem{example}{Example}

\newtheorem{lemma}{Lemma}

\newtheorem{problem}{Problem}
\newtheorem{proposition}{Proposition}
\newtheorem{remark}{Remark}

\numberwithin{equation}{section}
\def \func {\text}

\begin{document}
\title[Sharp bounds for the Gaussian hypergeometric function]{The first type
of power mean bounds for the Gaussian hypergeometric function}
\author{Zhen-Hang and Yang, Jing-Feng Tian}
\address{Zhen-Hang Yang \\
Engineering Research Center of Intelligent Computing for Complex Energy
Systems of Ministry of Education\\
North China Electric Power University \\
Yonghua Street 619, 071003, Baoding, P. R. China\\
and Zhejiang Society for Electric Power, Hangzhou, Zhejiang, 310014, P. R.
China}
\email{\url{yzhkm@163.com}}
\address{Jing-Feng Tian \\
Department of Mathematics and Physics\\
North China Electric Power University \\
Yonghua Street 619, 071003, Baoding, P. R. China}
\email{\url{tianjf@ncepu.edu.cn}}
\subjclass[2000]{Primary 33C05, 41A58; Secondary 26D15, 26E60}
\keywords{Hypergeometric function, power mean, NP-type power series,
recurrence relation, inequality}
\thanks{This paper is in final form and no version of it will be submitted
for publication elsewhere.}

\begin{abstract}
In this paper, by using a signs rule for a class of special power series and
a recurrence relation of coefficients involving hypergeometric series, we
find the necessary and sufficient conditions for the sharp (at the left
endpoint) inequalities between the hypergeometric function and power mean
(Hm-Pm inequality) to hold in all possible cases, which greatly extends
those known results and solve the rest of problem on the sharp (at the left
endpoint) power mean bounds for the hypergeometric mean
\end{abstract}

\maketitle

%\dedicatory{Dedicated to the 60th anniversary of Zhejiang Electric Power Company Research Institute}

\section{Introduction}

For real numbers $a,b$, and $c$ with $c\neq 0,-1,-2,...$, the Gaussian
hypergeometric function is defined by%
\begin{equation*}
F\left( a,b;c;x\right) =\sum_{n=0}^{\infty }\frac{\left( a\right) _{n}\left(
b\right) _{n}}{\left( c\right) _{n}}\frac{x^{n}}{n!}
\end{equation*}%
for $x\in \left( -1,1\right) $, where $\left( a\right) _{n}$ denotes
Pochhammer symbol defined by%
\begin{equation*}
\left( a\right) _{n}=a\left( a+1\right) \cdot \cdot \cdot \left(
a+n-1\right) =\frac{\Gamma \left( n+a\right) }{\Gamma \left( a\right) },
\end{equation*}%
for $n=1,2,...$, and $(a)_{0}=1$ for $a\neq 0$, here $\Gamma
(x)=\int_{0}^{\infty }t^{x-1}e^{-t}dt$ $(x>0)$ is the gamma function.

The hypergeometric function $F(a;b;c;x)$ has the simple differentiation
formulas%
\begin{equation}
F^{\prime }\left( a,b;c;x\right) =\frac{ab}{c}F\left( a+1,b+1;c+1;x\right) .
\label{df}
\end{equation}%
The behavior of the hypergeometric function near $x=1$ in the three cases $%
a+b<c$, $a+b=c$, and $a+b>c$, $a,b,c>0$, is given by%
\begin{equation}
\left\{
\begin{array}{lc}
\bigskip F\left( a,b;c;1\right) =\dfrac{\Gamma \left( c\right) \Gamma \left(
c-a-b\right) }{\Gamma \left( c-a\right) \Gamma \left( c-b\right) } & \text{%
if }c>a+b\text{,} \\
\bigskip F\left( a,b;c;x\right) =\dfrac{R\left( a,b\right) -\ln \left(
1-x\right) }{B\left( a,b\right) }+O\left( \left( 1-x\right) \ln \left(
1-x\right) \right) & \text{if }c=a+b\text{,} \\
F\left( a,b;c;x\right) =(1-x)^{c-a-b}F(c-a,c-b;c;x) & \text{if }c<a+b\text{,}%
\end{array}%
\right.  \label{F-near1}
\end{equation}%
where%
\begin{equation*}
B\left( z,w\right) =\frac{\Gamma \left( z\right) \Gamma \left( w\right) }{%
\Gamma \left( z+w\right) }\text{, \ }\func{Re}\left( z\right) >0\text{, }%
\func{Re}\left( w\right) >0
\end{equation*}%
is the classical beta function,%
\begin{equation}
R\left( a,b\right) =-2\gamma -\psi \left( a\right) -\psi \left( b\right)
\text{,}  \label{R(a,b)}
\end{equation}%
here $\psi \left( z\right) =\Gamma ^{\prime }\left( z\right) /\Gamma \left(
z\right) $, $\func{Re}\left( z\right) >0$ is the psi function and $\gamma $
is the Euler-Mascheroni constant. It should be noted that $F\left(
a,b;a+b;x\right) $ is called zero-balanced function.

As special cases, the complete elliptic integrals of the first and second
kind have the hypergeometric series representations:%
\begin{eqnarray*}
K\left( r\right) &=&\int_{0}^{\pi /2}\frac{1}{\sqrt{1-r^{2}\sin ^{2}t}}dt=%
\frac{\pi }{2}F\left( \frac{1}{2},\frac{1}{2};1;r^{2}\right) , \\
E\left( r\right) &=&\int_{0}^{\pi /2}\sqrt{1-r^{2}\sin ^{2}t}dt=\frac{\pi }{2%
}F\left( -\frac{1}{2},\frac{1}{2};1;r^{2}\right) ,
\end{eqnarray*}%
where $r\in \left( 0,1\right) $. The complete elliptic integrals of the
first kind $K\left( r\right) $ is related to the Gaussian
arithmetic-geometric mean $AGM\left( 1,r^{\prime }\right) $ of $1$ and $%
r^{\prime }=\sqrt{1-r^{2}}\in \left( 0,1\right) $, which is given by Gauss's
formula%
\begin{equation}
AGM\left( 1,r^{\prime }\right) =\frac{\pi }{2K(r)}.  \label{AGM-K}
\end{equation}%
Due to $r^{\prime }<\left( 2/\pi \right) E\left( r\right) <1$, $F\left(
-1/2,1/2;1;r^{2}\right) $ is also a mean of $1$ and $r^{\prime }$.

In 1965, Carlson \cite{Carlson-PAMS-16-1965} defined the hypergeometric mean
value of positive numbers $x_{k}$ ($1\leq k\leq n$) by the R-function (see
\cite{Carlson-JMAA07-1963}). In particular, the Gauss' hypergeometric
function $\left[ F\left( -a,b;c;1-x\right) \right] ^{1/a}$ is a mean of $1$
and $x$ provided $c>b>0$ for $x\in \left( 0,1\right) $. Carlson \cite[%
Theorem 1]{Carlson-PAMS-16-1966} provided power mean bounds for the
hypergeometric mean:
\begin{eqnarray}
\left[ F\left( -a,b;c;1-x\right) \right] ^{1/a} &<&M_{a}\left( x,\frac{b}{c}%
\right) \text{ if }a>1,  \label{CI-a>1} \\
\left[ F\left( -a,b;c;1-x\right) \right] ^{1/a} &>&M_{a}\left( x,\frac{b}{c}%
\right) \text{ if }a<1  \label{CI-a<1}
\end{eqnarray}%
for $x\in \left( 0,1\right) $, where%
\begin{equation*}
M_{p}\left( x,w\right) =\left( 1-w+wx^{p}\right) ^{1/p}\text{ if }p\neq 0%
\text{ and }M_{0}\left( w,x\right) =x^{w}
\end{equation*}%
is the power mean of order $p\in \mathbb{R}$ of $1$ and $x$ with the weight $%
w\in \left( 0,1\right) $. It is known that $p\mapsto M_{p}\left( x,w\right) $
is increasing on $\mathbb{R}$.

Since then, a number of inequalities for certain special hypergeometric
means in terms of various means were found in \cite{Borwein-PA-NY-1974},
\cite{Almkvist-AMM-95-1988}, \cite{Barnard-JMA-31-2000}, \cite%
{Barnard-JMA-32-2000}, \cite{Alzer-JCAM-172-2004}, \cite{Chu-RM-61-2012},
\cite{Yang-JMAA-438-2016}, \cite{Yang-JIA-2017-106}, \cite%
{Yang-JMAA-462-2018}, \cite{Yang-JMAA-467-2018}, \cite{Yang-AMC-348-2019}.
For example, the following sharp inequalities holds for $r\in \left(
0,1\right) $:

\begin{equation*}
M_{0}\left( r^{\prime },\frac{1}{2}\right) <AGM\left( 1,r^{\prime }\right) =%
\frac{\pi }{2K(r)}<M_{1/2}\left( r^{\prime },\frac{1}{2}\right) ,\text{ \
(see \cite{Borwein-PA-NY-1974})}
\end{equation*}%
\begin{equation}
M_{3/2}\left( r^{\prime },\frac{1}{2}\right) <\frac{2}{\pi }E\left( r\right)
<\frac{\pi }{2}M_{q_{0}}\left( r^{\prime },\frac{1}{2}\right) ,\text{ \ (see
\cite{Alzer-JCAM-172-2004}, \cite{Barnard-JMA-31-2000})}  \label{E<A_q0}
\end{equation}%
\begin{equation*}
J_{7/4}\left( 1,r^{\prime }\right) <\frac{2}{\pi }E\left( r\right)
<J_{2/\left( \pi -2\right) }\left( 1,r^{\prime }\right) ,
\end{equation*}%
where $J_{p}\left( 1,r\right) =p\left( r^{p+1}-1\right) /\left[ \left(
p+1\right) \left( r^{p}-1\right) \right] $ is the one-parameter mean, see
\cite{Yang-JMAA-438-2016},

Motivated by Carlson's inequalities (\ref{CI-a>1}) and (\ref{CI-a<1}),
Barnard and Richards \cite[Theorem 1]{Barnard-CMFT-1-2001} showed that, for $%
a\in \left( 0,1\right) $ and $c>b>0$, the inequality%
\begin{equation}
\left[ F\left( -a,b;c;x\right) \right] ^{1/a}>M_{p_{1}}\left( 1-x,\frac{b}{c}%
\right)  \label{HM>Mp1}
\end{equation}%
holds for all $x\in \left( 0,1\right) $, where $p_{1}=\left( a+ab+c-b\right)
/\left( 1+c\right) $. Due to the monotonicity of $p\mapsto M_{p}\left(
x,w\right) $ together with $p_{1}>a$, the inequality (\ref{HM>Mp1}) is a
refinement of (\ref{CI-a<1}). Further, the authors \cite{Barnard-CMFT-1-2001}
conjectured that, for $a\in (0,1)$, $b>0$, and $a+b>1/2$, the sharp
inequality%
\begin{equation*}
M_{p}\left( 1-x,\frac{1}{2}\right) <\left[ F\left( -a,b;2b;x\right) \right]
^{1/a}
\end{equation*}%
holds for all $x\in (0,1)$ if $p<\left( a+2b\right) /\left( 1+2b\right) $.
Also, they proposed the following problem.

\begin{problem}[{\protect\cite[Problem]{Barnard-CMFT-1-2001}}]
\label{P-BR}Given $a>0,c>b>0$, identify conditions on $a,b,c$, and the sharp
value $\varphi \left( a,b,c\right) $ such that if $p>\varphi (a,b,c)$, the
inequality%
\begin{equation*}
\left[ F\left( -a,b;c;x\right) \right] ^{1/a}<M_{p}\left( 1-x,\frac{b}{c}%
\right)
\end{equation*}%
holds for all $x\in \left( 0,1\right) $.
\end{problem}

In 2005, Richards \cite[Theorem 1]{Richards-JMAA-308-2005} confirmed the
conjecture proposed in \cite[Conjecture]{Barnard-CMFT-1-2001}. More
precisely, the author proved a more stronger result than the conjecture.

\begin{theorem}[{\textbf{\protect\cite[Theorem 1]{Richards-JMAA-308-2005}}}]
\textbf{\label{TR}} Suppose $a\leq 1$, $b>0$\ and $c\geq \max \left\{
-a,b\right\} $. If $c\geq \max \left\{ 1-2a,2b\right\} $, then%
\begin{equation}
M_{p}\left( 1-x,\frac{b}{c}\right) \leq \left[ F\left( -a,b;c;x\right) %
\right] ^{1/a},\text{ \ for all }x\in \left( 0,1\right) ,
\label{RI-Mp<Hm-p0}
\end{equation}%
if and only if $p\leq p_{0}$, where%
\begin{equation}
p_{0}=\frac{a+c}{1+c}.  \label{p0}
\end{equation}%
If $c\leq \min \left\{ 1-2a,2b\right\} $, then%
\begin{equation}
\left[ F\left( -a,b;c;x\right) \right] ^{1/a}\leq M_{p}\left( 1-x,\frac{b}{c}%
\right) ,\text{ \ for all }x\in \left( 0,1\right) ,  \label{RI-Hm<Mp-p0}
\end{equation}%
if and only if $p\geq p_{0}$. If $c=1-2a=2b$; or $c=\max \left\{
-a,b\right\} $; or $a=1$, then the equality holds.
\end{theorem}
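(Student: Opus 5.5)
We work with the difference of logarithms on $(0,1)$, in the form
\begin{equation*}
D_p(x)=\frac{1}{a}\ln F\left( -a,b;c;x\right) -\frac{1}{p}\ln \left( 1-\frac{b}{c}x^{\ast }\right), \qquad x^{\ast }=1-(1-x)^{p},
\end{equation*}
where $M_p(1-x,b/c)=\left(1-\frac{b}{c}x^{\ast}\right)^{1/p}$. The plan is first to derive necessity of $p\le p_0$ (resp.\ $p\ge p_0$) from the behaviour at the left endpoint, and then to prove sufficiency by a monotonicity argument for a suitable ratio of power series.

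\textbf{Step 1 (necessity).} Expand both sides at $x=0$ up to order $x^2$. Write $w=b/c$. For the power mean,
\begin{equation*}
(1-w+w(1-x)^p)^{1/p}=1-wx+\frac{w}{2}\bigl[(1-p)-(1-w)\bigr]x^2\cdot\text{(adjusted)}+O(x^3).
\end{equation*}
For the hypergeometric side,
\begin{equation*}
F^{1/a}=1-wx+\Bigl[\frac{(a-1)b(b+1)}{2c(c+1)}+\frac{(1-a)}{2}\cdot\frac{b^2}{c^2}\cdot(\dots)\Bigr]x^2+O(x^3).
\end{equation*}
The linear terms agree, so comparing the $x^2$ coefficients gives a linear condition in $p$. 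I expect this to reproduce exactly $p=(a+c)/(1+c)=p_0$, with the sign of the difference changing at $p_0$. This shows that $p_0$ is the only candidate sharp at the left endpoint. Since $p\mapsto M_p$ is increasing, it then suffices to prove \eqref{RI-Mp<Hm-p0} (resp.\ \eqref{RI-Hm<Mp-p0}) for $p=p_0$ alone.

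\textbf{Step 2 (sufficiency at $p=p_0$).} Differentiate using \eqref{df}. The inequality $D_{p_0}>0$ (resp.\ $<0$), with $D_{p_0}(0)=0$, follows if $D'_{p_0}$ has a constant sign. Substitute $t=(1-x)^{p_0}$, or alternatively work with the ratio
\begin{equation*}
\frac{F^{\prime}/F}{M_p^{\prime}/M_p}
\end{equation*}
written as a quotient of power series. I would then apply a monotone ratio rule: if $\sum \alpha_n x^n/\sum \beta_n x^n$ has $\alpha_n/\beta_n$ monotone, the quotient is monotone. The more robust alternative is to clear denominators and show that the resulting power series
\begin{equation*}
\sum_{n}\Bigl[c_n(a,b,c)\Bigr]x^n,
\end{equation*}
formed from $(1-wx^{\ast})F'(-a,b;c;x)$ minus the power mean counterpart, has coefficients whose signs follow a fixed pattern. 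Two patterns are acceptable: all coefficients of one sign, or one sign change with a positive limit at $x\to1$, handled by the NP-type sign rule.

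\textbf{Main obstacle.} The key step is to obtain a recurrence for the coefficients, via the contiguous relations of $F$, and to show that the conditions $c\ge\max\{1-2a,2b\}$ (resp.\ $\le\min$) force every coefficient to have the same sign. I would try to factor the general coefficient as $\frac{(-a)_n(b)_n}{(c)_n n!}\cdot Q_n$, with $Q_n$ a polynomial in $n$ whose coefficients are $(c-2b)$ and $(c+2a-1)$ times positive factors. The hypotheses then give the sign of each $Q_n$ directly. The behaviour at $x=1$, obtained from \eqref{F-near1}, serves as a consistency check.

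\textbf{Equality cases.} When $c=1-2a=2b$ the factors vanish, so $D'\equiv0$. When $a=1$, $F(-1,b;c;x)=1-wx=M_1$ and $p_0=1$. When $c=\max\{-a,b\}$ the series reduces to a binomial, which coincides with the corresponding power mean.
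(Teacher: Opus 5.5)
Your outline follows the paper's route. Necessity comes from the $x^2$ term at $x=0$: your expansions are placeholders, but the outcome is right, since $\lim_{x\to0}x^{-2}\bigl[a^{-1}\ln F(-a,b;c;x)-\ln M_p(1-x,b/c)\bigr]=-\frac{b(c-b)}{2c^2}(p-p_0)$. The reduction to $p=p_0$ uses that $p\mapsto M_p$ is increasing. Sufficiency comes from the sign of the log-derivative, which after clearing denominators is the power series $g_{p_0}(x)=(1-x)^{p_0-1}F(1-a,b;c+1;x)-F(1-a,b+1;c+1;x)=\sum_n\xi_n x^n$.

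The gap is at the step you call the main obstacle. The coefficients $\xi_n=u_n-v_n$ do not factor as $\frac{(-a)_n(b)_n}{(c)_n n!}Q_n$ with $Q_n$ a polynomial. Here $u_n=\sum_{k=0}^n\frac{(1-p_0)_{n-k}}{(n-k)!}\frac{(1-a)_k(b)_k}{k!(c+1)_k}$ is a convolution that satisfies a genuinely second-order recurrence (Proposition~\ref{P-PH-rr}) and has no hypergeometric closed form for general $p_0$. So there is no polynomial factor whose sign you can read off. Your assertion that the hypotheses ``force every coefficient to have the same sign'' therefore has no mechanism behind it.

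The structure you are looking for lives in the \emph{forcing term} of an inhomogeneous recurrence, $\xi_{n+1}-\alpha_n\xi_n+\beta_n\xi_{n-1}=\tau_n\gamma_n v_{n-1}$ (Lemma~\ref{L-xin-rr}). At $p_0$ one has $\gamma_n(p_0)=n\bigl[(n-1)p_0\delta(p_0)+\frac{(1-a)(a+c)}{c+1}(c-2b)\bigr]$ with $\delta(p_0)=\frac{a+c}{c+1}(c-2b)+\frac{b}{c+1}\bigl(c-(1-2a)\bigr)$. Hence Richards' hypotheses give $\gamma_n(p_0)\ge0$ (resp.\ $\le0$) for all $n\ge1$.

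A sign on the forcing term does not propagate through a three-term recurrence with $\beta_n>0$, because $-\beta_n\xi_{n-1}$ pulls the other way. The missing idea is to split the recurrence into two first-order steps with positive coefficients (Lemma~\ref{L-xin-rr-a}). Set $\theta_n=v_{n+1}/v_n>0$, $\lambda_n=\alpha_n-\theta_n>0$ and $\eta_n=\xi_n-\theta_{n-1}\xi_{n-1}$. Then $\eta_{n+1}-\lambda_n\eta_n=\tau_n\gamma_n(p_0)u_{n-1}$, with $\tau_n>0$ and $u_{n-1}>0$ (the latter because $p_0<1$).

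The sign argument then runs as follows:
\begin{itemize}
\item Start from $\xi_0=\xi_1=0$ and $\xi_2(p_0)=\frac{(1-a)(a+c)(c-2b)}{2(c+1)^2(c+2)}$.
\item Induction gives $\eta_n\ge0$, and then $\xi_n\ge\theta_{n-1}\xi_{n-1}\ge0$ for all $n$; in the second case all signs are reversed.
\item Hence $g_{p_0}\ge0$ and $f_{p_0}$ is monotone.
\end{itemize}

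Your equality case $c=1-2a=2b$ needs the same repair. The reason is not the vanishing of a factor in a closed-form coefficient. It is that $\gamma_n(p_0)\equiv0$ together with $\xi_0=\xi_1=\xi_2=0$, so the homogeneous recurrence forces $\xi_n\equiv0$.
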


In the end of \cite{Richards-JMAA-308-2005}, Richards pointed out in \cite[%
Section 4]{Richards-JMAA-308-2005} that, \textquotedblleft Generalizations
of this (that is, the second inequality of (\ref{E<A_q0})) and other sharp
inequalities complementary to Theorem \ref{TR} remain as intriguing problems
for further study\textquotedblright . After two years, Barnard and Richards
\cite{Barnard-JIPAM-8-2007} proved a result related to zero-balanced
hypergeometric functions.

\begin{theorem}[{\protect\cite[Proposition 2.1]{Barnard-JIPAM-8-2007}}]
\label{TBR}Suppose $0<\alpha \leq 1/2$. Then for all $r\in (0,1)$%
\begin{equation*}
M_{\lambda }\left( r^{\alpha },1-\alpha \right) <\frac{1}{F\left( \alpha
,1-\alpha ;1;1-r\right) }<M_{\mu }\left( r^{\alpha },1-\alpha \right)
\end{equation*}%
if and only if $\lambda \leq 0$ and $\mu \geq \left( 1-\alpha \right)
/\left( 2\alpha \right) $.
\end{theorem}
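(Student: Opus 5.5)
We take $\beta=1-\alpha\in[1/2,1)$, substitute $x=1-r\in(0,1)$, and write $F(x)=F(\alpha,\beta;1;x)$. The power mean is $M_p(r^\alpha,\beta)=\left(\alpha+\beta r^{\alpha p}\right)^{1/p}$, so the statement reduces to a claim about the single function
\begin{equation*}
G(x)=\left[F(x)\right]^{-1},
\end{equation*}
compared with $(\alpha+\beta(1-x)^{\alpha p})^{1/p}$. Taking $p$-th powers (with the sign of $p$ tracked), the double inequality becomes the two-sided comparison $F(x)^{-p}\gtrless \alpha+\beta(1-x)^{\alpha p}$. Equivalently, we study
\begin{equation*}
H_p(x)=\frac{F(x)^{-p}-\alpha}{\beta}-(1-x)^{\alpha p},
\end{equation*}
together with the limit case $p=0$, where the statement reads $F(x)^{-1}>(1-x)^{\alpha\beta}$. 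Since $p\mapsto M_p$ is increasing, it suffices to prove three things: the lower bound for $\lambda=0$, the upper bound for $\mu=\beta/(2\alpha)$, and sharpness at both ends.

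Sharpness comes from asymptotics at the two endpoints.

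\emph{Near $x=0$.} We have $F=1+\alpha\beta x+\tfrac{\alpha^2\beta^2(1+\alpha)(1+\beta)}{4\cdot\alpha\beta}x^2+\dots$. Expanding both sides of $G(x)$ versus $M_p$ to second order, the first-order terms agree automatically, since the weight $\beta$ was chosen for this. The second-order coefficient of the difference is linear in $p$ and vanishes exactly at $p=\beta/(2\alpha)$. This forces $\mu\ge\beta/(2\alpha)$.

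\emph{Near $x=1$.} By the zero-balanced case of (\ref{F-near1}), $F(x)\sim -\ln(1-x)/B(\alpha,\beta)\to\infty$, so $G(x)\to0$ only logarithmically. For $p>0$, however, $M_p\to\alpha^{1/p}>0$, while for $p\le0$ we get $M_p\to0$ like a power of $1-x$. Hence for $p>0$ the inequality $M_p<G$ fails near $x=1$, which forces $\lambda\le 0$.

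For the inequalities themselves, the lower bound at $\lambda=0$ is equivalent to
\begin{equation*}
-\ln F(x)>\alpha\beta\ln(1-x).
\end{equation*}
We would prove it by differentiating with (\ref{df}). The claim becomes $\alpha\beta F(\alpha+1,\beta+1;2;x)/F(x)<\alpha\beta/(1-x)$, that is, $(1-x)F(\alpha+1,\beta+1;2;x)<F(\alpha,\beta;1;x)$. Comparing power series coefficients, this follows from the contiguous relation for $(1-x)F'$ (Euler's transformation, giving $(1-x)F(\alpha+1,\beta+1;2;x)=F(1-\alpha,1-\beta;2;x)=F(\beta,\alpha;2;x)$). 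Since $(\alpha)_n(\beta)_n/((2)_n n!)<(\alpha)_n(\beta)_n/(n!)^2$ for $n\ge1$, the inequality holds termwise. This is clean.

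The main obstacle is the upper bound at $\mu=\beta/(2\alpha)$. Here we must show $H_\mu(x)<0$ (or $>0$, depending on orientation) on all of $(0,1)$. Our plan is to write
\begin{equation*}
F^{-\mu}-\alpha-\beta(1-x)^{\alpha\mu}
\end{equation*}
through a ratio whose monotonicity can be tested by the monotone form of l'H\^opital's rule. Concretely, we consider $\phi(x)=\left(F^{-\mu}\right)'/\left((1-x)^{\alpha\mu}\right)'$, which up to constants is $(1-x)^{1-\beta/2}F(\alpha+1,\beta+1;2;x)F^{-\mu-1}$. Both numerator and denominator of the target ratio vanish at $x=0$, so it suffices to show that $\phi$ is monotone with $\phi(0^+)=1$ and the correct direction. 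Establishing the monotonicity of $\phi$ is the hard step. We would take its logarithmic derivative, reduce via Euler's transformation to the sign of a power series $\sum c_nx^n$ built from products of hypergeometric coefficients, and show that this series has at most one sign change: its first coefficient is zero by the choice of $\mu$, and its later coefficients are of one sign, verified by a recurrence for the coefficients. The restriction $\alpha\le1/2$ should enter exactly in this coefficient-sign verification. If the direct series is unwieldy, we would first attempt the special case $\alpha=1/2$ (where $\mu=1/2$, the Borwein AGM bound) as a guide, and then interpolate.
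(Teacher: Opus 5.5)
Your necessity arguments are correct: the second-order term at $x=0$ gives $\mu\ge\beta/(2\alpha)$, and the logarithmic growth of $F$ at $x=1$ gives $\lambda\le 0$. Your proof of the lower bound at $\lambda=0$ is also complete. By Euler's transformation, the derivative of $-\ln F-\alpha\beta\ln(1-x)$ is a positive multiple of $F(\alpha,\beta;1;x)-F(\alpha,\beta;2;x)>0$. The paper itself does not prove this half; it only recovers the upper half, via Corollary \ref{C2-T2}.

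The gap is the upper bound at $\mu=\beta/(2\alpha)$. This is the substantive half, and the only place where $\alpha\le 1/2$ enters, and for it you give a plan rather than a proof. After Euler's transformation your l'H\^opital quotient is $\phi=\beta(1-x)^{-\beta/2}F(\alpha,\beta;2;x)\,F(\alpha,\beta;1;x)^{-\mu-1}$. Its logarithmic derivative contains two different ratios $F'/F$, so clearing denominators gives an expression \emph{quadratic} in hypergeometric functions (products such as $F(\alpha,\beta;1;x)F(\alpha,\beta;2;x)$), whose Maclaurin coefficients are convolutions. You never exhibit the recurrence that is supposed to fix their signs, and the available tool, Proposition \ref{P-PH-rr}, only handles $(1-x)^pF(a,b;c;x)$. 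You would also need $\phi$ to be monotone on all of $(0,1)$, which is more than the inequality requires and is delicate. Expanding gives $(\ln\phi)'(x)=\frac{\alpha\beta(1+\alpha)(1-2\alpha)}{12}\,x+O(x^2)$, so at $\alpha=1/2$ this coefficient vanishes too. ``Interpolating'' from $\alpha=1/2$ is not an argument.

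The missing idea is to differentiate $\ln\bigl(G/M_\mu\bigr)$ itself. Since $M_\mu$ contributes only a rational function of $(1-x)^{p_0}$ with $p_0=\alpha\mu=\beta/2$, clearing denominators gives an expression \emph{linear} in hypergeometric functions. This is the paper's route: $G/M_\mu=f_{p_0}^{\alpha}$ with $(a,b,c)=(-\alpha,\beta,1)$, as in Corollary \ref{C2-T2}, via Theorem \ref{MT-2}(i) and Theorem \ref{MT-m}(iii).

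By (\ref{gp-a1}), the sign of the derivative is that of $g_{p_0}(x)=(1-x)^{\beta/2-1}F(1+\alpha,\beta;2;x)-F(1+\alpha,1+\beta;2;x)=\sum\xi_nx^n$. Its coefficients obey the recurrence of Lemma \ref{L-xin-rr-a}, with $\theta_n,\lambda_n,\tau_n,u_n>0$. The forcing term is governed by $\gamma_n(p_0)=-n\bigl[(n-1)\beta^2/4+(1+\alpha)\beta(1-2\alpha)/2\bigr]$, which is $\le 0$ for all $n$ exactly when $\alpha\le 1/2$. Since $\xi_1=0$ and $\xi_2=-(1+\alpha)\beta(1-2\alpha)/24\le 0$, induction on $\eta_n=\xi_n-\theta_{n-1}\xi_{n-1}$ gives $\xi_n\le 0$ for all $n$. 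Hence $G/M_\mu$ decreases from its value $1$ at $x=0$, which is the step your proposal still needs.
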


Following the idea of the proof of Theorem \ref{TBR}, Barnard, Richards and
Tiedeman \cite{Barnard-JMI-4-2010} further verified that the function%
\begin{equation*}
r\mapsto M_{0}\left( \left( 1-r\right) ^{-a},\frac{b}{a+b}\right) -F\left(
a,b;a+b;r\right)
\end{equation*}%
is absolutely monotonic on $\left( 0,1\right) $.

In 2010, using Alzer and Qiu's \cite{Alzer-JCAM-172-2004} approach to the
upper bound in the second inequality of (\ref{E<A_q0}) and numerical
evidence, Barnard, Richards and Tiedeman \cite{Barnard-JMI-4-2010}
conjectured the following companion to Theorem \ref{TR}, where%
\begin{equation}
p_{0}^{\ast }=\frac{a\ln \left( 1-b/c\right) }{\ln \frac{\Gamma \left(
c\right) \Gamma \left( c+a-b\right) }{\Gamma \left( c+a\right) \Gamma \left(
c-b\right) }}.  \label{p0*}
\end{equation}

\begin{conjecture}[{\protect\cite[Conjecture I]{Barnard-JMI-4-2010}}]
\label{Cj-BRT}Let $a\leq 1$, $b>0$\ and $c>b>0$. Suppose $c\geq \max \left\{
1-2a,2b\right\} $. Then%
\begin{equation}
\left[ F\left( -a,b;c;x\right) \right] ^{1/a}\leq M_{p}\left( 1-x,\frac{b}{c}%
\right) \text{ \ for all }x\in \left( 0,1\right) ,  \label{H<M-BRT1}
\end{equation}%
if $p\geq p_{0}^{\ast }$ (sharp). Suppose $c\leq \min \left\{
1-2a,2b\right\} $. Then the inequality (\ref{H<M-BRT1}) reverses $p\leq
p_{0}^{\ast }$ (sharp).
\end{conjecture}

Applying Pales' Comparison Theorem \cite{Pales-JMAA-131-1988}, the authors
verified the above conjecture is valid for certain special cases.

\begin{theorem}[{\protect\cite[Proposition 1]{Barnard-JMI-4-2010}}]
\label{T-BRT}Suppose $a\in (-1,1)$. If $a>-1/2$, then for all $r\in (0,1)$,%
\begin{equation*}
M_{\lambda }\left( 1-r,\frac{1}{2}\right) <\left[ F\left( -a,1;2;r\right) %
\right] ^{1/a}<M_{\mu }\left( 1-r,\frac{1}{2}\right)
\end{equation*}%
if $\lambda \leq \left( a+2\right) /3$ (sharp) and $\mu \geq \left( a\ln
2\right) /\ln \left( 1+a\right) $ (sharp). If $a<-1/2$, then the above
double inequality holds for all $r\in (0,1)$ if $\lambda \leq \left( a\ln
2\right) /\ln \left( 1+a\right) $ (sharp) and $\mu \geq \left( a+2\right) /3$
(sharp).
\end{theorem}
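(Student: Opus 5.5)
The plan is to reduce the statement to a comparison of two classical two-variable means of $1$ and $x=1-r$, and then to invoke Pales' Comparison Theorem \cite{Pales-JMAA-131-1988}.

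\textbf{Closed form.} Since $(1)_n/(2)_n=1/(n+1)$, we have
\begin{equation*}
F(-a,1;2;r)=\sum_{n\ge 0}\frac{(-a)_n}{(n+1)!}r^n=\int_0^1(1-rt)^a\,dt=\frac{1-(1-r)^{a+1}}{(a+1)r},
\end{equation*}
valid for $a\neq -1$. Setting $x=1-r\in(0,1)$, the middle term becomes
\begin{equation*}
\left[\frac{1-x^{a+1}}{(a+1)(1-x)}\right]^{1/a}=E_{a+1,1}(1,x),
\end{equation*}
the Stolarsky (extended) mean with parameters $(a+1,1)$. For $a=0$ it is read as the limiting logarithmic–identric type mean. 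The weighted power mean with weight $1/2$ is itself a Stolarsky mean: $M_p(x,1/2)=E_{2p,p}(1,x)$. So the double inequality is exactly a comparison $E_{2\lambda,\lambda}\le E_{a+1,1}\le E_{2\mu,\mu}$ between Stolarsky means. Both sides are symmetric and homogeneous, so it suffices to compare them at $(1,x)$.

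\textbf{Necessary conditions from the endpoints.} The sharp constants are forced by the two ends of $(0,1)$.
\begin{itemize}
\item \emph{Near $x=1$.} Write $x=e^{-2t}$ and compare the $t^2$ terms of the logarithms of both means. The power mean gives $\tfrac{p}{2}t^2$ after centering at $\sqrt{x}$. The Stolarsky mean $E_{r,s}$ gives $\tfrac{r+s}{6}t^2$. Matching $\tfrac{p}{2}=\tfrac{a+2}{6}$ yields $p=(a+2)/3$. Hence any valid lower or upper exponent must lie on the correct side of $(a+2)/3$.
\item \emph{Near $x\to 0^+$.} For $a>-1$ the middle mean tends to $(1+a)^{-1/a}$. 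For $p>0$, $M_p(0,1/2)=2^{-1/p}$. Equality of these limits gives $p=a\ln 2/\ln(1+a)$, which is positive for $a\in(-1,1)$. This constant forces the other side.
\end{itemize}
Which of the two constants is the larger decides which one is the lower bound. Comparing $(a+2)/3$ with $a\ln2/\ln(1+a)$, they coincide at $a=-1/2$ (both equal $1/2$) and at $a=1$. This explains the switch at $a=-1/2$.

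\textbf{Sufficiency via Pales.} Pales' theorem states that $E_{r,s}\le E_{u,v}$ for all arguments if and only if $r+s\le u+v$ and
\begin{equation*}
L(r,s)\le L(u,v),
\end{equation*}
where $L(r,s)=(r-s)/\ln(r/s)$ when $rs>0$, with the appropriate sign conventions when $\min(r,s)\le0$. I would apply it in two ways.
\begin{itemize}
\item \emph{The $(a+2)/3$ side.} Take $(u,v)=(a+1,1)$ and $(r,s)=(2\lambda,\lambda)$. The first condition $3\lambda\le a+2$ is exactly $\lambda\le(a+2)/3$. The second condition $\lambda/\ln2\le L(a+1,1)=a/\ln(a+1)$ holds in the right regime; when $a<-1/2$ it is replaced by the corresponding reversed pair for the upper bound.
\item \emph{The logarithmic side.} The roles swap. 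Now $L(2\mu,\mu)\ge L(a+1,1)$ is exactly $\mu\ge a\ln2/\ln(1+a)$. The sum condition $3\mu\ge a+2$ then follows from the elementary inequality $a\ln2/\ln(1+a)\ge(a+2)/3$ for $a>-1/2$, reversed for $a<-1/2$.
\end{itemize}

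\textbf{Main obstacle.} I expect the hardest step to be verifying, case by case, the sign of
\begin{equation*}
g(a)=3a\ln 2-(a+2)\ln(1+a)
\end{equation*}
on $(-1,1)$, while keeping track of the sign of $\ln(1+a)$ when dividing. The natural route is to check $g(-1/2)=g(0)=g(1)=0$ and then use convexity or concavity of $g$ together with these zeros. Particular care is needed at $a=0$, where the constant $a\ln2/\ln(1+a)$ is read as its limit $\ln 2$.
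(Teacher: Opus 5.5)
Your argument is correct and is essentially the proof the paper attributes to Barnard, Richards and Tiedeman. You identify $[F(-a,1;2;r)]^{1/a}$ and $M_p(1-r,1/2)$ with the Stolarsky means $E_{a+1,1}(1,x)$ and $E_{2p,p}(1,x)$, obtain sharpness from the two endpoint limits, and get sufficiency from P\'ales' comparison theorem. Your convexity/concavity analysis of $g(a)=3a\ln 2-(a+2)\ln(1+a)$ (zeros at $-1/2,0,1$ and $g''(a)=-a/(1+a)^{2}$) does the same job as the paper's monotonicity argument for $p_{0}/p_{0}^{\ast}$ in Example~\ref{E1-c=2b}.

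The one missing detail is strictness. P\'ales' theorem only gives $\le$, so at the critical exponents you need a short extra argument. On the $(a+2)/3$ side, for instance, the strict monotonicity of the ratio from Theorem~\ref{MT-m}(i),(iii) supplies it.
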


\begin{remark}
Clearly, Theorem \ref{TR} provides a type of sharp power mean bounds $%
M_{p_{0}}\left( 1-x,b/c\right) $ for the hypergeometric mean $\left[ F\left(
-a,b;c;x\right) \right] ^{1/a}$, which is sharp at $x=0$. If Conjecture \ref%
{Cj-BRT} is proved to be true, then it provides another type of sharp power
mean bounds $M_{p_{0}^{\ast }}\left( 1-x,b/c\right) $ for the hypergeometric
mean $\left[ F\left( -a,b;c;x\right) \right] ^{1/a}$, which is sharp at $x=1$%
. For convenience, we call them \textquotedblleft the first and second type
of power mean bounds\textquotedblright , respectively. Moreover, we call the
inequity%
\begin{equation}
\left[ F\left( -a,b;c;x\right) \right] ^{1/a}\leq \left( \geq \right)
M_{p}\left( 1-x,\frac{b}{c}\right) \text{ for }x\in \left( 0,1\right)
\label{Hm<>Pm}
\end{equation}%
as "Hm-Pm inequality". If such inequality is sharp at $x=0$ ($x=1$) then it
is called \textquotedblleft the first (second) Hm-Pm
inequality\textquotedblright .
\end{remark}

From Theorem \ref{TR}, a problem naturally rises.

\begin{problem}
\label{P-CTR}Suppose $a\leq 1$, $b>0$\ and $c\geq \max \left\{ -a,b\right\} $%
. When $\min \left\{ 1-2a,2b\right\} <c<\max \left\{ 1-2a,2b\right\} $, what
are the conditions such that the hypergeometric mean $\left[ F\left(
-a,b;c;x\right) \right] ^{1/a}$ has the first type of power mean bounds $%
M_{p_{0}}\left( 1-x,b/c\right) $?
\end{problem}

If Problem \ref{P-CTR} is solved, then the first Hm-Pm inequality in Problem %
\ref{P-BR} is completely tackled. The aim of this paper is to weaken those
conditions in Theorem \ref{TR} for the first Hm-Pm inequality to hold.
Precisely, we shall find necessary and sufficient conditions for the first
Hm-Pm inequalities to hold in all possible cases, which contains Theorem \ref%
{TR} and gives an answer to Problem \ref{P-CTR}.

As shown in \cite[p. 304]{Richards-JMAA-308-2005}, if $a=1$; or $c=\max
\{-a,b\}$; or $c=1-2a=2b$, then%
\begin{equation}
\left[ F\left( -a,b;c;x\right) \right] ^{1/a}\equiv M_{p_{0}}\left( 1-x,%
\frac{b}{c}\right) ,  \label{Fm=Mp0}
\end{equation}%
where $p_{0}$ is defined by (\ref{p0}). In what follows, unless otherwise
noted, we always assume that

(i) $\left( a,b,c\right) \in \left\{ \left( a,b,c\right) :a<1,b>0,c>\max
\left\{ -a,b\right\} \right\} :=B\left( a,b,c\right) $. Then $p_{0}\in
\left( 0,1\right) $.

(ii) As shown in Remark \ref{R-gp=0}, if $\left( a,b,c\right) \in B\left(
a,b,c\right) $ then the identity (\ref{Fm=Mp0}) holds for $x\in \left(
0,1\right) $ if and only if $c=1-2a=2b$. This fact will be no longer listed
in our results.

(iii) If $a=0$, then $\left[ F\left( -a,b;c;x\right) \right] ^{1/a}$ and $%
p_{0}^{\ast }$ are regarded as their limits at $a=0$, that are,%
\begin{eqnarray*}
\lim_{a\rightarrow 0}\left[ F\left( -a,b;c;x\right) \right] ^{1/a} &=&\exp %
\left[ -\sum_{n=1}^{\infty }\frac{\left( b\right) _{n}}{\left( c\right) _{n}}%
\frac{x^{n}}{n}\right] , \\
\lim_{a\rightarrow 0}p_{0}^{\ast } &=&\frac{\ln \left( 1-b/c\right) }{\psi
\left( c-b\right) -\psi \left( c\right) }.
\end{eqnarray*}

The paper is organized as follows. Our main results are stated in Section 2,
where our first main result, Theorem \ref{MT-m}, is listed in Section 2.1,
completely describes the monotonicity pattern of $f_{p_{0}}\left( x\right) $
on $\left( 0,1\right) $, our third and fourth results, Theorems \ref{MT-1}
and \ref{MT-2} presented in Section 2.2, completely offer necessary and
sufficient conditions for the first Hm-Pm inequality, and several examples,
as applications of our main results, are listed Section 2.4. In Section 3,
we introduce two important tools, in which the \textquotedblleft
Negative-Positive type power series\textquotedblright\ and their signs rule
are arranged in Subsection 3.1, a recurrence relation involving the
hypergeometric function is established in Subsection 3.2. In Section 4, we
consider the derivative of the function $f_{p}\left( x\right) $ defined on $%
\left( 0,1\right) $ by
\begin{equation}
f_{p}\left( x\right) =\frac{\left[ F\left( -a,b;c;x\right) \right] ^{1/a}}{%
\left[ 1-b/c+\left( b/c\right) \left( 1-x\right) ^{p}\right] ^{1/p}},
\label{fp}
\end{equation}%
and obtain $\func{sgn}f_{p}^{\prime }\left( x\right) =\func{sgn}g_{p}\left(
x\right) $ and $g_{p}\left( x\right) =\sum_{n=0}^{\infty }\xi _{n}x^{n}$,
where $g_{p}\left( x\right) $ is given by (\ref{gp}). In Subsection 4.1, we
deduce the recurrence relations of $\xi _{n}$ in two forms; in Subsections
4.2 and 4.3, we determine the signs of $\gamma _{n}\left( p_{0}\right) $ and
$g_{p}\left( 1\right) $. These statements, as lemmas, are directly used in
the proofs of main results. The proofs of main results are presented in
Sections 5 and 6.

\bigskip

\section{Main results and examples}

Before stating main results, we introduce several notations, that are,%
\begin{eqnarray}
\delta \left( p\right) &=&a-b+c-p,  \label{dp} \\
C_{p} &=&f_{p}\left( 1\right) =\left( 1-\frac{b}{c}\right) ^{-1/p}\left[
\frac{\Gamma \left( c\right) \Gamma \left( c+a-b\right) }{\Gamma \left(
c+a\right) \Gamma \left( c-b\right) }\right] ^{1/a},  \label{Cp}
\end{eqnarray}%
if $b>0$ and $c>\max \left\{ b,b-a\right\} $.

\begin{remark}
\label{R-Cp0-1}We claim that%
\begin{equation}
\left[ \frac{\Gamma \left( c\right) \Gamma \left( c+a-b\right) }{\Gamma
\left( c+a\right) \Gamma \left( c-b\right) }\right] ^{1/a}<1\text{ \ or \ }%
\frac{1}{a}\ln \frac{\Gamma \left( c\right) \Gamma \left( c+a-b\right) }{%
\Gamma \left( c+a\right) \Gamma \left( c-b\right) }<0  \label{/p0*<0}
\end{equation}%
if $a\neq 0$, $b>0$ and $c>\max \left\{ b,b-a\right\} $. In fact, in view of
$\left( \ln \Gamma \left( x\right) \right) ^{\prime \prime }=\psi ^{\prime
}\left( x\right) >0$ for $x>0$, by the property of convex function it is
derived that%
\begin{eqnarray*}
\frac{1}{a}\ln \frac{\Gamma \left( c\right) \Gamma \left( c+a-b\right) }{%
\Gamma \left( c+a\right) \Gamma \left( c-b\right) } &=&\frac{\ln \Gamma
\left( c-b+a\right) -\ln \Gamma \left( c-b\right) }{a} \\
&&-\frac{\ln \Gamma \left( c+a\right) -\ln \Gamma \left( c\right) }{a}<0.
\end{eqnarray*}%
This implies that $C_{p_{0}}\geq \left( \leq \right) 1$ is equivalent to $%
p_{0}\leq \left( \geq \right) p_{0}^{\ast }$, which gives%
\begin{equation}
\min \left\{ C_{p_{0}},1\right\} =1\text{ iff }p_{0}\leq p_{0}^{\ast }\text{
\ and \ }\max \left\{ C_{p_{0}},1\right\} =1\text{ iff }p_{0}\geq
p_{0}^{\ast }\text{.}  \label{Cp0,1-p0,p0*}
\end{equation}
\end{remark}

\subsection{Monotonicity of $f_{p_{0}}\left( x\right) $ and $f_{p}\left(
x\right) $}

Our first main result completely describes the monotonicity pattern of $%
f_{p_{0}}\left( x\right) $ on $\left( 0,1\right) $, which records as follows.

\begin{theorem}
\label{MT-m}Suppose that $a<1$, $b>0$ and $c>\max \left\{ -a,b\right\} $.
Let $p_{0}$ and $C_{p_{0}}$ be defined by (\ref{p0}) and (\ref{Cp}),
respectively.

(i) The function $f_{p_{0}}$ defined by (\ref{fp}) is increasing on $\left(
0,1\right) $ if and only if $c\geq 2b$ and $a-b+c\geq p_{0}$. Therefore, the
double inequality%
\begin{equation}
M_{p_{0}}\left( 1-x,\frac{b}{c}\right) <\left[ F\left( -a,b;c;x\right) %
\right] ^{1/a}<C_{p_{0}}M_{p_{0}}\left( 1-x,\frac{b}{c}\right)
\label{Pm<Hm<CPm-p0}
\end{equation}%
holds for $x\in \left( 0,1\right) $ if $c\geq 2b$ and $a-b+c\geq p_{0}$.

(ii) If $c\geq 2b$ and $a-b+c<p_{0}$, then there is an $x_{0}\in \left(
0,1\right) $ such that the function $f_{p_{0}}$ is increasing on $\left(
0,x_{0}\right) $ and decreasing on $\left( x_{0},1\right) $. Consequently,
the inequality%
\begin{equation}
\min \left\{ 1,C_{p_{0}}\right\} M_{p_{0}}\left( 1-x,\frac{b}{c}\right) <%
\left[ F\left( -a,b;c;x\right) \right] ^{1/a}  \label{Pm<Hm-p0}
\end{equation}%
holds for $x\in \left( 0,1\right) $ if $a-b+c>0$.

(iii) The function $f_{p_{0}}$ is decreasing on $\left( 0,1\right) $ if and
only if $c\leq 2b$ and $a-b+c\leq p_{0}$. Therefore, if $c\leq 2b$ and $%
a-b+c\leq p_{0}$, then the inequality%
\begin{equation}
\left[ F\left( -a,b;c;x\right) \right] ^{1/a}<M_{p_{0}}\left( 1-x,\frac{b}{c}%
\right)  \label{Hm<Pm-p0}
\end{equation}%
holds for $x\in \left( 0,1\right) $. Further, the double inequality%
\begin{equation}
C_{p_{0}}M_{p_{0}}\left( 1-x,\frac{b}{c}\right) <\left[ F\left(
-a,b;c;x\right) \right] ^{1/a}<M_{p_{0}}\left( 1-x,\frac{b}{c}\right)
\label{CPm<Hm<Pm-p0}
\end{equation}%
holds for $x\in \left( 0,1\right) $ if $a-b+c>0$.

(iv) If $c\leq 2b$ and $a-b+c>p_{0}$, then there is an $x_{0}\in \left(
0,1\right) $ such that the function $f_{p_{0}}$ is decreasing on $\left(
0,x_{0}\right) $ and increasing on $\left( x_{0},1\right) $. Consequently,
the inequality%
\begin{equation}
\left[ F\left( -a,b;c;x\right) \right] ^{1/a}<\max \left\{
1,C_{p_{0}}\right\} M_{p_{0}}\left( 1-x,\frac{b}{c}\right)  \label{Hm<cPm-p0}
\end{equation}%
holds for $x\in \left( 0,1\right) $.
\end{theorem}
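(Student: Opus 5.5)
The plan is to reduce the monotonicity of $f_{p_0}$ to the sign pattern of a power series and then apply the signs rule for NP-type series of Section 3. Write $F=F(-a,b;c;x)$ and $M=1-b/c+(b/c)(1-x)^{p}$. Logarithmic differentiation of (\ref{fp}) gives
\[
\frac{f_p'(x)}{f_p(x)}=\frac{F'}{aF}+\frac{b}{c}\frac{(1-x)^{p-1}}{M}.
\]
Multiplying by the positive factor $aF\,M\,(1-x)^{1-p}$, or by its negative counterpart with signs tracked, yields $\operatorname{sgn}f_p'=\operatorname{sgn}g_p$. Here
\[
g_p(x)=(1-x)F'(x)\,\Bigl[\bigl(1-\tfrac{b}{c}\bigr)(1-x)^{-p}+\tfrac{b}{c}\Bigr]+\tfrac{ab}{c}F(x),
\]
up to a harmless normalization. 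I would expand $(1-x)^{-p}$, $F$ and $F'$ via (\ref{df}), so that $g_p(x)=\sum_{n\ge0}\xi_n x^n$. Using the contiguous relations of $F$, I would derive a first-order inhomogeneous recurrence for $\xi_n$, expressed through a sequence $\gamma_n(p)$ whose coefficients are rational in $n$. This is the recurrence of Section 3.2 and Subsection 4.1.

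Next I would specialize to $p=p_0=(a+c)/(1+c)$. This choice is exactly what kills the lowest nontrivial coefficient, so that $f_{p_0}(x)=1+O(x^3)$; equivalently, $\xi_0=\xi_1=0$ and $\xi_2$ has the sign of $(c-2b)(1-2a-c)$ up to a positive factor. From the recurrence I would show that the sequence $\gamma_n(p_0)$, and hence $\xi_n$, changes sign at most once, and determine its sign for small $n$ and large $n$.

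\begin{itemize}
\item For small $n$ the sign is governed by $\operatorname{sgn}(c-2b)$.
\item For large $n$ it is governed by the behavior of $g_{p_0}$ near $x=1$.
\end{itemize}

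Near $x=1$ the term $(1-x)^{1-p_0}F'$ dominates. By (\ref{F-near1}) applied to $F(1-a,b+1;c+1;x)$, the exponent comparison is between $c-a-b$ and $p_0$, that is, $\operatorname{sgn}\delta(p_0)$. Concretely, I expect $\xi_n$ to eventually have the sign of $\delta(p_0)=a-b+c-p_0$, with the boundary case $\delta(p_0)=0$ needing the value $g_{p_0}(1)$ computed in Subsection 4.3.

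The cases are then read off from the signs rule. If all $\xi_n\ge0$ (case (i): $c\ge2b$, $\delta(p_0)\ge0$), then $f_{p_0}$ is increasing. If the $\xi_n$ go from positive to negative and $g_{p_0}(1^-)<0$ (case (ii)), the NP-rule gives a unique zero $x_0$, so $f_{p_0}$ increases then decreases. Cases (iii) and (iv) are symmetric. The inequalities (\ref{Pm<Hm<CPm-p0}) through (\ref{Hm<cPm-p0}) then follow from $f_{p_0}(0^+)=1$ and $f_{p_0}(1^-)=C_{p_0}$. The condition $a-b+c>0$ is needed exactly so that $F(-a,b;c;1)$ is finite and $C_{p_0}$ is given by (\ref{Cp}).

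The ``only if'' parts follow from the local behavior at the endpoints. If $c<2b$, then $\xi_2<0$, so $f_{p_0}$ decreases near $0$. If $\delta(p_0)<0$, then $g_{p_0}\to-\infty$ (or is negative) near $1$.

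\textbf{Main obstacle.} The hardest step is proving the single sign change of $\xi_n$ from the recurrence, uniformly over the whole region $B(a,b,c)$, including negative $a$ and $a=0$. I would first try to write
\[
\xi_{n+1}=\alpha_n\xi_n+\beta_n
\]
with $\alpha_n>0$, and show that $\beta_n$ changes sign at most once, with the right ordering. This would let an induction propagate the sign once it flips.
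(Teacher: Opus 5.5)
Your outline has the same architecture as the paper's proof. You reduce $\operatorname{sgn} f_{p_0}'$ to the sign of $g_{p_0}=\sum\xi_nx^n$ with $\xi_0=\xi_1=0$, and you note that $\gamma_n(p_0)$ changes sign at most once. That part is explicit: $\gamma_n(p_0)/n=(n-1)p_0\delta(p_0)+\gamma_1(p_0)$ with $\gamma_1(p_0)=\frac{(1-a)(a+c)}{c+1}(c-2b)$. You take the behaviour at $x=1$ from Lemma \ref{L-gp(1)}, finish with Proposition \ref{P-sgnS}, and get necessity from $\xi_2$ and $g_{p_0}(1^-)$. However, the step you defer as the main obstacle is the core of the proof, and as described it does not go through.

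The recurrence delivered by Proposition \ref{P-PH-rr} is not first order. It is the three-term relation (\ref{xin-rr-0}), $\xi_{n+1}=\alpha_n\xi_n-\beta_n\xi_{n-1}+\tau_n\gamma_nv_{n-1}$ with $\alpha_n,\beta_n>0$. Because of the term $-\beta_n\xi_{n-1}$, a single sign change of the forcing does not imply a single sign change of $\xi_n$, since such recurrences can oscillate. So your ``and hence $\xi_n$'' is unjustified, and writing $\xi_{n+1}=\alpha_n\xi_n+\beta_n$ is vacuous unless a second recurrence controls $\beta_n$.

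The missing idea is the paper's factorization. Put $\theta_n=v_{n+1}/v_n$, $\lambda_n=\alpha_n-\theta_n$ and $\eta_n=\xi_n-\theta_{n-1}\xi_{n-1}$. Identity (\ref{vn-gn}) reads $\lambda_n\theta_{n-1}-\beta_n=\tau_n\gamma_n$. Hence the defect $\tau_n\gamma_n\xi_{n-1}$ of this factorization merges with the forcing $\tau_n\gamma_nv_{n-1}$ into $\tau_n\gamma_nu_{n-1}$, which is (\ref{xin-rr-2}): $\eta_{n+1}=\lambda_n\eta_n+\tau_n\gamma_nu_{n-1}$. Here $\lambda_n>0$ for $n\ge 2$, $u_{n-1}>0$ because $0<p_0<1$, and $\eta_2=\xi_2$ has the sign of $\gamma_1(p_0)$. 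So $\eta_n$ changes sign at most once, in the same direction as $\gamma_n(p_0)$. Then $\xi_{n+1}=\theta_n\xi_n+\eta_{n+1}$ with $\theta_n>0$ is exactly the form you wanted: your $\alpha_n$ is $\theta_n$ and your $\beta_n$ is $\eta_{n+1}$. Without this device, you have established neither $\xi_n\ge 0$ for all $n$ in case (i) nor the PN/NP pattern in cases (ii) and (iv).

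There are also three smaller errors.

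\textbf{The sign of $\xi_2$.} In fact $\xi_2(p_0)=\frac{(1-a)(a+c)(c-2b)}{2(c+1)^2(c+2)}$, which has the sign of $c-2b$ alone. The extra factor $1-2a-c$ in your claim is spurious. It would also contradict your own necessity step ``$c<2b\Rightarrow \xi_2<0$'' whenever $1-2a<c<2b$.

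\textbf{The normalization.} It is not harmless at the level of coefficients. Your $g_p$ equals $\frac{ab(c-b)}{c^2}(1-x)^{1-p}$ times the function in (\ref{gp-a1}). Multiplying by $(1-x)^{1-p}$ preserves the sign of the function but not necessarily the sign pattern of its Taylor coefficients. You must expand the form (\ref{gp-a1}) itself. That is what gives $\xi_n=u_n-v_n$, with $u_n>0$ obeying Proposition \ref{P-PH-rr} and $v_n$ hypergeometric with the explicit ratio $\theta_n$.

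\textbf{The exponent at $x=1$.} The comparison there is between $c+a-b$ and $p_0$, not between $c-a-b$ and $p_0$.
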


\begin{remark}
As a by-product, Theorem \ref{MT-m} implies an inequality between $p_{0}$
and $p_{0}^{\ast }$. In fact, from the inequalities connecting the first and
third functions in (\ref{Pm<Hm<CPm-p0}) and (\ref{Pm<Hm<CPm-p0}) it follows
that%
\begin{equation*}
C_{p_{0}}>\left( <\right) 1\text{ if }c\geq \left( <\right) 2b\text{ and\ }%
0<a-b+c\geq \left( \leq \right) p_{0},
\end{equation*}%
which, by (\ref{Cp0,1-p0,p0*}), leads to%
\begin{equation}
p_{0}\leq \left( \geq \right) p_{0}^{\ast }\text{ if }c\geq \left( <\right)
2b\text{ \ and \ }0<a-b+c\geq \left( \leq \right) p_{0}.  \label{p0<>p*}
\end{equation}
\end{remark}

Our second main result provides the sufficient conditions for the
monotonicity of $f_{p}\left( x\right) $ for $p\in \mathbb{R}$, which is
contained in the following theorem.

\begin{theorem}
\label{T-fp-m}Suppose that $a<1$, $b>0$ and $c>\max \left\{ -a,b\right\} $.
Let $p_{0}$ be defined by (\ref{p0}) and $p\in \mathbb{R}$.

(i) If $c\geq 2b$ and $p\leq p_{0}\leq a-b+c$, then the function $f_{p}$
defined by (\ref{fp}) is increasing on $\left( 0,1\right) $, and therefore,
the double inequality%
\begin{equation}
M_{p}\left( 1-x,\frac{b}{c}\right) <\left[ F\left( -a,b;c;x\right) \right]
^{1/a}<C_{p}M_{p}\left( 1-x,\frac{b}{c}\right)  \label{Pm<Hm<CPm-p}
\end{equation}%
holds for $x\in \left( 0,1\right) $, where $C_{p}$ is defined by (\ref{Cp}).

(ii) If $c\leq 2b$ and $p\geq p_{0}\geq a-b+c$, then the function $f_{p}$ is
decreasing on $\left( 0,1\right) $, and therefore, the reverse of the left
hand side inequality of (\ref{Pm<Hm<CPm-p}) holds for $x\in \left(
0,1\right) $. Further, if $a-b+c>0$, then the double inequality (\ref%
{Pm<Hm<CPm-p}) reverses.
\end{theorem}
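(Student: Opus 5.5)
The plan is to reduce Theorem \ref{T-fp-m} to Theorem \ref{MT-m} through the monotonicity of $p\mapsto M_p$. Write
\begin{equation*}
f_p(x)=f_{p_0}(x)\cdot \frac{M_{p_0}\left(1-x,b/c\right)}{M_p\left(1-x,b/c\right)}=:f_{p_0}(x)\,h_p(x).
\end{equation*}
In case (i) we have $c\ge 2b$ and $a-b+c\ge p_0$. By Theorem \ref{MT-m}(i), $f_{p_0}$ is then positive and increasing on $(0,1)$. So it suffices to show that $h_p$ is positive and increasing when $p\le p_0$. Case (ii) is symmetric: Theorem \ref{MT-m}(iii) makes $f_{p_0}$ decreasing, and we need $h_p$ decreasing for $p\ge p_0$.

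The key lemma concerns $\phi(p,x)=\ln M_p(1-x,w)$ with $w=b/c\in(0,1)$. We need that $x\mapsto \phi(p_0,x)-\phi(p,x)$ is increasing when $p\le p_0$. Equivalently, $\partial_x\phi(p,x)$ should be increasing in $p$. Put $t=1-x$. Then
\begin{equation*}
-\partial_x\phi(p,x)=\frac{w t^{p-1}}{1-w+w t^{p}}=\frac{1}{t}\cdot\frac{w t^p}{1-w+wt^p}.
\end{equation*}
For fixed $t\in(0,1)$, the map $p\mapsto wt^p/(1-w+wt^p)$ is decreasing, because $t^p$ decreases in $p$ and $u\mapsto u/(1-w+u)$ is increasing. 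Hence $\partial_x\phi(p,x)$ increases with $p$, which gives
\begin{equation*}
\partial_x\ln h_p=\partial_x\phi(p_0,x)-\partial_x\phi(p,x)\ge 0\quad\text{for } p\le p_0.
\end{equation*}
This also covers $p=0$ and $p<0$ by continuity. Therefore $\ln f_p=\ln f_{p_0}+\ln h_p$ is increasing in case (i) and decreasing in case (ii).

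For the inequalities, note $f_p(0^+)=1$. The limit at $x=1$ is $f_p(1^-)=C_p$, by (\ref{F-near1}) and (\ref{Cp}). Strict monotonicity then yields $1<f_p<C_p$ in case (i), and $C_p<f_p<1$ in case (ii). This requires $C_p$ to be finite, i.e. $F(-a,b;c;1)$ must exist and be positive. Existence holds since $c>b-a$ forces $c-(-a)-b>0$. In case (i) we have $a-b+c\ge p_0>0$, so finiteness is automatic. In case (ii) finiteness is exactly the extra hypothesis $a-b+c>0$; without it only the one-sided inequality $f_p<1$ survives.

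The only delicate point is strictness. It comes from $f_{p_0}$ being strictly monotone, which Theorem \ref{MT-m} provides, together with $h_p$ being nondecreasing. One should also check that $M_p$ for $p\le 0$ (with $p=0$ read as the limit) still has $1-w+wt^p>0$. This holds since $t^p\ge 1$ when $p\le 0$. I expect no real obstacle beyond invoking Theorem \ref{MT-m}.
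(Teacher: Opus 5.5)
Your monotonicity argument is the paper's own. The paper also factors $f_p=f_{p_0}\cdot M_{p_0}/M_p$. It reads the sign of the logarithmic derivative of the power-mean ratio from $b(c-b)(t^{p-1}-t^{p_0-1})/[(c-b+bt^{p})(c-b+bt^{p_0})]$, with $t=1-x$, and then invokes Theorem \ref{MT-m}(i),(iii). Your comparison through $p\mapsto wt^p/(1-w+wt^p)$ is the same sign check. Case (ii), the lower bound in case (i), and your discussion of finiteness when $a-b+c>0$ are fine.

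The step that fails is the identification $f_p(1^-)=C_p$ with $C_p$ given by (\ref{Cp}). You use it for every $p\le p_0$ in case (i), explicitly including $p\le 0$.
\begin{itemize}
\item For $p\le 0$, $M_p(1-x,b/c)\to 0$ as $x\to 1^-$. When $p<0$, the quantity $1-w+wt^p\to+\infty$ is raised to the negative power $1/p$; when $p=0$, $M_0=t^{w}$. Meanwhile $[F(-a,b;c;x)]^{1/a}$ has a finite positive limit, so $f_p(1^-)=+\infty$.
\item For $p<0$ the value from (\ref{Cp}) is actually less than $1$. The first factor is below $1$ because $-1/p>0$, and the second by Remark \ref{R-Cp0-1}. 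So the right-hand inequality of (\ref{Pm<Hm<CPm-p}) contradicts the left-hand one and is false for every $x\in(0,1)$.
\item Concretely, $(a,b,c)=(1/2,1/2,1)$ satisfies the hypotheses of (i). With $p=-1$ you get $C_{-1}=2/\pi^{2}$. As $x\to 0^+$ the left side of the claimed upper bound tends to $1$ and the right side to $2/\pi^{2}$.
\item For $p=0$, (\ref{Cp}) is not even defined.
\end{itemize}
So the upper bound in (i) holds only for $0<p\le p_0$, or if $C_p$ is read as $f_p(1^-)$, which is $+\infty$ when $p\le 0$. The statement itself has this defect. The paper's one-line ``the required inequalities follow from the monotonicity'' passes over it as well. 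Still, you should restrict the right-hand bound to $p>0$ rather than assert the double inequality for all real $p\le p_0$.
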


\begin{proof}
We first show that the function%
\begin{equation*}
x\mapsto \frac{f_{p}\left( x\right) }{f_{p_{0}}\left( x\right) }=\frac{\left[
1-b/c+\left( b/c\right) \left( 1-x\right) ^{p_{0}}\right] ^{1/p_{0}}}{\left[
1-b/c+\left( b/c\right) \left( 1-x\right) ^{p}\right] ^{1/p}}
\end{equation*}%
is increasing (decreasing) on $\left( 0,1\right) $ if $p\leq \left( \geq
\right) p_{0}$. In fact, logarithmic differentiation yields%
\begin{eqnarray*}
\left[ \ln \frac{f_{p}\left( x\right) }{f_{p_{0}}\left( x\right) }\right]
^{\prime } &=&b\frac{\left( 1-x\right) ^{p-1}}{c-b+b\left( 1-x\right) ^{p}}-b%
\frac{\left( 1-x\right) ^{p_{0}-1}}{c-b+b\left( 1-x\right) ^{p_{0}}} \\
&=&b\left( c-b\right) \frac{t^{p-1}-t^{p_{0}-1}}{\left( c-b+bt^{p}\right)
\left( c-b+bt^{p_{0}}\right) }\geq \left( \leq \right) 0
\end{eqnarray*}%
for $t=1-x\in \left( 0,1\right) $ if $p\leq \left( \geq \right) p_{0}$. Now
using Theorem \ref{MT-m} (i) we deduce that, the function%
\begin{equation*}
x\mapsto f_{p}\left( x\right) =\frac{f_{p}\left( x\right) }{f_{p_{0}}\left(
x\right) }\times f_{p_{0}}\left( x\right)
\end{equation*}%
is increasing on $\left( 0,1\right) $ if $c\geq 2b$, $a-b+c\geq p_{0}$ and $%
p\leq p_{0}$. Similarly, $f_{p}\left( x\right) $ is decreasing on $\left(
0,1\right) $ if $c\leq 2b$, $a-b+c\leq p_{0}$ and $p\geq p_{0}$. The
required inequalities follow from the monotonicity of $f_{p}\left( x\right) $%
, which completes the proof.
\end{proof}

\begin{remark}
The double inequality \ref{Pm<Hm<CPm-p} and its reverse provide a maximum
relative error of estimate of $\left[ F\left( -a,b;c;x\right) \right] ^{1/a}$
by $M_{p}\left( 1-x,b/c\right) $:%
\begin{equation*}
\left\vert \frac{\left[ F\left( -a,b;c;x\right) \right] ^{1/a}-M_{p}\left(
1-x,b/c\right) }{\left[ F\left( -a,b;c;x\right) \right] ^{1/a}}\right\vert
<\left\vert \frac{C_{p}-1}{C_{p}}\right\vert .
\end{equation*}
\end{remark}

\begin{remark}
We claim that $p\mapsto C_{p}M_{p}\left( 1-x,b/c\right) $ is decreasing on $%
\left( 0,\infty \right) $. In fact, it suffices to prove that $\phi ^{\prime
}\left( p\right) <0$ for $p>0$, where%
\begin{equation*}
\phi \left( p\right) =\ln \left[ \left( \frac{c}{c-b}\right)
^{1/p}M_{p}\left( 1-x,\frac{b}{c}\right) \right] =\frac{1}{p}\ln \left(
1+\sigma t^{p}\right)
\end{equation*}%
with $\sigma =b/\left( c-b\right) $ and $t=1-x\in \left( 0,1\right) $.
Differentiation leads to%
\begin{eqnarray*}
p^{2}\phi ^{\prime }\left( p\right) &=&\frac{\sigma pt^{p}}{1+\sigma t^{p}}%
\ln t-\ln \left( 1+\sigma t^{p}\right) :=\phi _{1}\left( p\right) , \\
\phi _{1}^{\prime }\left( p\right) &=&p\sigma \frac{t^{p}\ln ^{2}t}{\left(
1+\sigma t^{p}\right) ^{2}}>0
\end{eqnarray*}%
for $p>0$. This gives $\phi _{1}\left( p\right) <\lim_{p\rightarrow \infty
}\phi _{1}\left( p\right) =0$, so $\phi ^{\prime }\left( p\right) <0$ for $%
p>0$, thereby completing the proof.
\end{remark}

\subsection{The first Hm-Pm inequalities for $p=p_{0}$}

Our third and fourth main results are contained in Theorems \ref{MT-1} and %
\ref{MT-2}, which completely offer necessary and sufficient conditions for
the first Hm-Pm inequality.

\begin{theorem}
\label{MT-1}Suppose that $a<1$, $b>0$ and $c>\max \left\{ -a,b\right\} $.
Let $p_{0}$ and $p_{0}^{\ast }$ be defined by (\ref{p0}) and (\ref{p0*}),
respectively.

(i) If $a-b+c\geq p_{0}$, then the first Hm-Pm inequality%
\begin{equation}
\left[ F\left( -a,b;c;x\right) \right] ^{1/a}>M_{p_{0}}\left( 1-x,\frac{b}{c}%
\right)   \label{Hm>Pm-p0}
\end{equation}%
holds for $x\in \left( 0,1\right) $ if and only if $c\geq 2b$.

(ii) If $a-b+c<p_{0}$, then the first Hm-Pm inequality (\ref{Hm>Pm-p0})
holds for $x\in \left( 0,1\right) $ if and only if $a-b+c>0$, $c\geq 2b$ and
$p_{0}\leq p_{0}^{\ast }$.
\end{theorem}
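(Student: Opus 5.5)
The plan is to derive Theorem \ref{MT-1} from the monotonicity classification of Theorem \ref{MT-m}. The key identity is that (\ref{Hm>Pm-p0}) is exactly $f_{p_{0}}(x)>1=f_{p_{0}}(0^{+})$ on $(0,1)$. Two facts will be used repeatedly. First, if $f_{p_{0}}$ is decreasing near $x=0$, the inequality fails. Second, when $a-b+c>0$, we have $c>b-a$, so $f_{p_{0}}(1^{-})=C_{p_{0}}$ is finite by (\ref{F-near1}), and (\ref{Cp0,1-p0,p0*}) converts the condition $C_{p_{0}}\geq 1$ into $p_{0}\leq p_{0}^{\ast }$.

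For part (i), assume $a-b+c\geq p_{0}$. If $c\geq 2b$, Theorem \ref{MT-m}(i) says $f_{p_{0}}$ is increasing on $(0,1)$, and the left inequality in (\ref{Pm<Hm<CPm-p0}) is (\ref{Hm>Pm-p0}). For the converse, suppose $c<2b$ and split into two cases.
\begin{itemize}
\item If $a-b+c=p_{0}$, Theorem \ref{MT-m}(iii) gives that $f_{p_{0}}$ is decreasing, so $f_{p_{0}}<1$.
\item If $a-b+c>p_{0}$, Theorem \ref{MT-m}(iv) gives that $f_{p_{0}}$ decreases on $(0,x_{0})$, so $f_{p_{0}}<1$ there.
\end{itemize}
In both cases (\ref{Hm>Pm-p0}) fails, which proves necessity of $c\geq 2b$.

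For part (ii), assume $a-b+c<p_{0}$. If $c<2b$ (indeed $c\leq 2b$), Theorem \ref{MT-m}(iii) shows $f_{p_{0}}$ is decreasing, so the inequality fails; hence $c\geq 2b$ is necessary. With $c\geq 2b$, Theorem \ref{MT-m}(ii) shows $f_{p_{0}}$ is unimodal, increasing then decreasing, so $\inf_{(0,1)}f_{p_{0}}=\min\{1,f_{p_{0}}(1^{-})\}$.

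For sufficiency, assume $a-b+c>0$ and $p_{0}\leq p_{0}^{\ast }$. Then $C_{p_{0}}\geq 1$, so $\min\{1,C_{p_{0}}\}=1$, and (\ref{Pm<Hm-p0}) is precisely (\ref{Hm>Pm-p0}). The strictness is automatic, since a unimodal function with strict monotonicity exceeds its endpoint limits in the interior.

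For necessity, we must exclude two cases.
\begin{itemize}
\item If $a-b+c>0$ but $p_{0}>p_{0}^{\ast }$, then $C_{p_{0}}<1$, so $f_{p_{0}}(x)<1$ near $x=1$ and the inequality fails.
\item If $a-b+c\leq 0$, I need to show $f_{p_{0}}(x)\to $ something below $1$ as $x\to 1^{-}$, or at least that $f_{p_{0}}<1$ near $1$.
\end{itemize}

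The main obstacle is this last case, $a-b+c\leq 0$, which means $c+a\leq b$. The denominator of $f_{p_{0}}$ tends to $(1-b/c)^{1/p_{0}}>0$. For the numerator, I would use the third line of (\ref{F-near1}) with parameters $(-a,b,c)$: since $c<b-a$, it gives $F(-a,b;c;x)=(1-x)^{c+a-b}F(c+a,c-b;c;x)$. Here $c+a>0$ and $c-b>0$, so $F(c+a,c-b;c;1)=\Gamma(c)\Gamma(b-a-c)/(\Gamma(b)\Gamma(-a))$ when $a<0$.

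The subcases then split by the sign of $a$.
\begin{itemize}
\item If $a>0$ and $c+a<b$, then $F(-a,b;c;x)$ becomes negative or zero before $x=1$. Indeed $F(c+a,c-b;c;1)$ carries the sign of $1/\Gamma(-a)<0$, so $F(-a,b;c;x)$ has a zero and $[F]^{1/a}$ tends to $0$. Hence $f_{p_{0}}\to 0<1$.
\item If $a<0$, then $F\to +\infty$, so $F^{1/a}\to 0$ as well, and again $f_{p_{0}}\to 0<1$.
\item The boundary case $c+a=b$ uses the logarithmic (zero-balanced) line of (\ref{F-near1}) analogously. For $a<0$ one has $F\sim -\ln(1-x)/B(-a,b)\to\infty$; for $a>0$, $F(-a,b;b-a;x)$ tends to $\Gamma(b-a)\Gamma(0^{+})$-type behaviour, which I would handle by noting $F(-a,b;c;1)=\Gamma(c)\Gamma(c+a-b)/(\Gamma(c+a)\Gamma(c-b))\to 0$ as $c\to b-a$.
\item The case $a=0$ is handled through the limiting exponential form.
\end{itemize}
In every subcase $f_{p_{0}}(1^{-})=0$, so the inequality fails near $x=1$. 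This yields the necessity of $a-b+c>0$ and completes the plan.
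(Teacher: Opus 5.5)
Your proposal is correct and follows essentially the paper's proof. Sufficiency comes from Theorem \ref{MT-m}(i),(ii) together with $\min\{1,C_{p_{0}}\}=1$ when $p_{0}\leq p_{0}^{\ast}$, and necessity from the behaviour of $f_{p_{0}}$ at the two endpoints. You extract $c\geq 2b$ from Theorem \ref{MT-m}(iii),(iv) instead of the paper's third-order coefficient $Nc_{1}$ at $x=0$, but both amount to reading off the sign of $\xi_{2}(p_{0})$.

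One simplification: when $a-b+c\leq 0$, the hypothesis $c>b$ already forces $a<0$, so your $a>0$ and $a=0$ subcases are empty, and the reasoning sketched for them would not hold up anyway. Only your $a<0$ argument is needed, and it is correct; it even treats the zero-balanced boundary $c=b-a$ more carefully than the paper, which cites only the third line of (\ref{F-near1}).
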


\begin{theorem}
\label{MT-2}Suppose that $a<1$, $b>0$ and $c>\max \left\{ -a,b\right\} $.
Let $p_{0}$ and $p_{0}^{\ast }$ be defined by (\ref{p0}) and (\ref{p0*}),
respectively.

(i) If $a-b+c\leq p_{0}$, then the first Hm-Pm inequality (\ref{Hm<Pm-p0})
holds for $x\in \left( 0,1\right) $ if and only if $c\leq 2b$.

(ii) If $a-b+c>p_{0}$, then the first Hm-Pm inequality (\ref{Hm<Pm-p0})
holds for $x\in \left( 0,1\right) $ if and only if $c\leq 2b$ and $p_{0}\geq
p_{0}^{\ast }$.
\end{theorem}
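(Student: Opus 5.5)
Theorem \ref{MT-2} is the mirror image of Theorem \ref{MT-1}, and I would derive it from the monotonicity pattern of $f_{p_{0}}$ in Theorem \ref{MT-m}, plus a local analysis at $x=0$ and the value $C_{p_{0}}=f_{p_{0}}(1)$. Since $f_{p_{0}}(0)=1$, inequality (\ref{Hm<Pm-p0}) is exactly $f_{p_{0}}(x)<1$ on $(0,1)$.

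\textbf{Necessity of $c\leq 2b$ (both parts).} Expand $f_{p_{0}}$ near $x=0$. The choice $p_{0}=(a+c)/(1+c)$ makes the coefficients of $x$ of $[F(-a,b;c;x)]^{1/a}$ and of $M_{p_{0}}(1-x,b/c)$ agree (both equal $-b/c$), and also the $x^{2}$ coefficients, since $p_{0}$ is the first-type sharp exponent. So the first nonvanishing term of $f_{p_{0}}(x)-1$ is of order $x^{3}$. Computing it (equivalently, the sign of the first nonzero coefficient $\xi_{n}$ of $g_{p_{0}}$), I expect a positive multiple of $(c-2b)(1-a-b\ldots)$-type factors. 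Given Remark \ref{R-gp=0} and the degenerate case $c=1-2a=2b$, the relevant factor should be $(c-2b)$ times a factor whose sign is fixed once the $\delta(p_{0})$-condition is imposed. Hence if $c>2b$, then $f_{p_{0}}>1$ near $0$ and (\ref{Hm<Pm-p0}) fails. In practice I would avoid this computation: if $c>2b$, Theorem \ref{MT-m}(i),(ii) already say $f_{p_{0}}$ is increasing on an initial interval $(0,x_{0})$, so $f_{p_{0}}(x)>1$ there.

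\textbf{Part (i), sufficiency.} If $c\leq 2b$ and $a-b+c\leq p_{0}$, Theorem \ref{MT-m}(iii) gives $f_{p_{0}}$ decreasing on $(0,1)$, hence $f_{p_{0}}(x)<f_{p_{0}}(0)=1$. Combined with the necessity argument, this settles (i).

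\textbf{Part (ii).} Let $a-b+c>p_{0}$ and $c\leq 2b$. Then $a-b+c>p_{0}>0$, so $C_{p_{0}}$ is finite and given by (\ref{Cp}). By Theorem \ref{MT-m}(iv), $f_{p_{0}}$ decreases on $(0,x_{0})$ and increases on $(x_{0},1)$. Therefore $\sup_{(0,1)}f_{p_{0}}=\max\{1,C_{p_{0}}\}$, approached only at the endpoints, and $f_{p_{0}}<1$ on $(0,1)$ if and only if $C_{p_{0}}\leq 1$. Indeed, if $C_{p_{0}}>1$, continuity forces $f_{p_{0}}>1$ near $x=1$. By (\ref{Cp0,1-p0,p0*}), $C_{p_{0}}\leq 1$ is equivalent to $p_{0}\geq p_{0}^{\ast}$. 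Combined with the necessity of $c\leq 2b$, this gives (ii).

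\textbf{Main obstacle.} The main delicate point is the boundary case $C_{p_{0}}=1$ in (ii). There the strictness at $x=1$ is harmless, since $x=1$ is excluded and $f_{p_{0}}$ is strictly increasing on $(x_{0},1)$ up to the value $1$. The real weight of the proof rests on Theorem \ref{MT-m}(iii),(iv), which I take as given; the remaining work is the bookkeeping above.
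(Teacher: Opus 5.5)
Your proposal is correct and follows essentially the paper's proof. Sufficiency in (i) and (ii) comes from Theorem \ref{MT-m}(iii),(iv), and necessity of $p_{0}\geq p_{0}^{\ast}$ comes from $\lim_{x\to 1}f_{p_{0}}(x)=C_{p_{0}}\leq 1$ together with (\ref{Cp0,1-p0,p0*}). The one difference is how $c\leq 2b$ is obtained. The paper uses the explicit third-order coefficient $Nc_{1}$, which is just a positive multiple of $c-2b$ with no $\delta(p_{0})$-dependent factor, contrary to your guess. Your fallback, the initial increase of $f_{p_{0}}$ from Theorem \ref{MT-m}(i),(ii), is equally valid, since even non-strict increase from $f_{p_{0}}(0)=1$ already contradicts $f_{p_{0}}<1$.
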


By the inequality (\ref{p0<>p*}) we have%
\begin{eqnarray*}
\left\{ a-b+c\geq p_{0},c\geq 2b\right\} &=&\left\{ a-b+c\geq p_{0},c\geq
2b,p_{0}\leq p_{0}^{\ast }\right\} , \\
\left\{ 0<a-b+c\leq p_{0},c\leq 2b\right\} &=&\left\{ 0<a-b+c\leq
p_{0},c\leq 2b,p_{0}\geq p_{0}^{\ast }\right\} .
\end{eqnarray*}%
Using the above relations and replacing the assumption that
\textquotedblleft $c>\max \left\{ -a,b\right\} $\textquotedblright\ by
\textquotedblleft $c>\max \left\{ b,b-a\right\} $\textquotedblright\ in
Theorems \ref{MT-1} and \ref{MT-2}, we can obtain the the following nice
conclusion.

\begin{corollary}
\label{C-MT1+2}Suppose that $a<1$, $b>0$ and $c>\max \left\{ b,b-a\right\} $%
. Let $p_{0}$ and $p_{0}^{\ast }$ be defined by (\ref{p0}) and (\ref{p0*}),
respectively.

(i) The first Hm-Pm inequality (\ref{Hm>Pm}) holds for $x\in \left(
0,1\right) $ if and only if $c\geq 2b$ and $p_{0}\leq p_{0}^{\ast }$.

(ii) The first Hm-Pm inequality (\ref{Hm<Pm}) inequality holds for $x\in
\left( 0,1\right) $ if and only if $c\leq 2b$ and $p_{0}\geq p_{0}^{\ast }$.
\end{corollary}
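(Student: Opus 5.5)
The plan is to derive Corollary \ref{C-MT1+2} from Theorems \ref{MT-1} and \ref{MT-2}, using the set identities stated just before the corollary; these follow from (\ref{p0<>p*}). Here (\ref{Hm>Pm}) and (\ref{Hm<Pm}) are read as (\ref{Hm>Pm-p0}) and (\ref{Hm<Pm-p0}).

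First I check that the new hypothesis fits the theorems. If $c>\max\{b,b-a\}$ and $a<1$, then $c>b>0$, and $a-b+c>0$. Also $c>-a$: if $a\ge 0$ this is trivial, and if $a<0$ then $c>b-a>-a$. So $(a,b,c)\in B(a,b,c)$, both theorems apply, and $a-b+c>0$ holds automatically.

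For part (i) I split according to the sign of $a-b+c-p_0$.

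\emph{Case $a-b+c\ge p_0$.} By Theorem \ref{MT-1}(i), (\ref{Hm>Pm-p0}) holds iff $c\ge 2b$. By the first set identity, namely (\ref{p0<>p*}) with $c\ge 2b$ and $0<a-b+c\ge p_0$, the condition $c\ge 2b$ already forces $p_0\le p_0^*$. Hence "$c\ge 2b$" is equivalent to "$c\ge 2b$ and $p_0\le p_0^*$."

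\emph{Case $a-b+c<p_0$.} By Theorem \ref{MT-1}(ii), (\ref{Hm>Pm-p0}) holds iff $a-b+c>0$, $c\ge 2b$ and $p_0\le p_0^*$. Since $a-b+c>0$ is automatic, this reduces to "$c\ge 2b$ and $p_0\le p_0^*$."

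The two cases together give (i).

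Part (ii) is symmetric, using Theorem \ref{MT-2}.

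\emph{Case $a-b+c>p_0$.} Theorem \ref{MT-2}(ii) gives exactly the stated condition.

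\emph{Case $a-b+c\le p_0$.} Theorem \ref{MT-2}(i) gives "$c\le 2b$." Since $0<a-b+c\le p_0$, the second set identity, i.e. (\ref{p0<>p*}) in its reversed form, shows that $c\le 2b$ implies $p_0\ge p_0^*$.

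The only delicate point is that second identity at the boundary $c=2b$. Relation (\ref{p0<>p*}) is stated with $c<2b$, so I need to confirm that the reversed form also covers $c=2b$. For that I go back to Theorem \ref{MT-m}(iii). When $c\le 2b$ and $0<a-b+c\le p_0$, the double inequality (\ref{CPm<Hm<Pm-p0}) gives $C_{p_0}<1$, and then (\ref{Cp0,1-p0,p0*}) yields $p_0\ge p_0^*$.

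Similarly, Theorem \ref{MT-m}(i) gives $C_{p_0}>1$ under $c\ge 2b$ and $a-b+c\ge p_0$. This justifies the first identity, including the boundary case $c=2b$.

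These boundary checks are the main (mild) obstacle. Everything else is bookkeeping of the cases.
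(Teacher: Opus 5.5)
Your proposal is correct and follows the paper's own route. You obtain the corollary from Theorems \ref{MT-1} and \ref{MT-2} by noting that $c>\max\{b,b-a\}$ forces $c>-a$ and $a-b+c>0$, and by using the two set identities that come from Theorem \ref{MT-m}(i),(iii) together with (\ref{Cp0,1-p0,p0*}). Your extra check that the second identity also covers $c=2b$, by appealing directly to Theorem \ref{MT-m}(iii) (which is stated for $c\le 2b$, whereas (\ref{p0<>p*}) is written with $c<2b$), is a sound clarification of a point the paper glosses over.
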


\subsection{The first Hm-Pm inequalities for $p\leq \left( \geq \right) p_{0}
$ }

Using Theorems \ref{MT-1} and \ref{MT-2} we can deduce our fifth main
result, which shows the necessary and sufficient condition for the general
Hm-Pm inequalities.

\begin{theorem}
\label{MT-3}Suppose that $a<1$, $b>0$ and $c>\max \left\{ -a,b\right\} $.
Let $p_{0}$ and $p_{0}^{\ast }$ be defined by (\ref{p0}) and (\ref{p0*}),
respectively.

(i) If $c\geq 2b$ and $a-b+c\geq p_{0}$, or $c\geq 2b$ and $%
0<a-b+c<p_{0}\leq p_{0}^{\ast }$, then the first Hm-Pm inequality%
\begin{equation}
\left[ F\left( -a,b;c;x\right) \right] ^{1/a}>M_{p}\left( 1-x,\frac{b}{c}%
\right)   \label{Hm>Pm}
\end{equation}%
holds for $x\in \left( 0,1\right) $ if and only if $p\leq p_{0}$.

(ii) $c\leq 2b$ and $a-b+c\leq p_{0}$, or $c\leq 2b$ and $a-b+c>p_{0}\geq
p_{0}^{\ast }$, then the first Hm-Pm inequality%
\begin{equation}
\left[ F\left( -a,b;c;x\right) \right] ^{1/a}<M_{p}\left( 1-x,\frac{b}{c}%
\right)   \label{Hm<Pm}
\end{equation}%
holds for $x\in \left( 0,1\right) $ if and only if $p\geq p_{0}$.
\end{theorem}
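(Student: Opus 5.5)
Theorem \ref{MT-3} follows from Theorems \ref{MT-1} and \ref{MT-2} together with the monotonicity of $p\mapsto M_{p}\left( 1-x,b/c\right) $ on $\mathbb{R}$. For each part we prove sufficiency for $p\le p_0$ (resp. $p\ge p_0$) and then necessity by an expansion at $x=0$.

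\emph{Sufficiency in (i).} Under either set of hypotheses of (i), Theorem \ref{MT-1} gives (\ref{Hm>Pm-p0}):
\begin{itemize}
\item in the case $c\geq 2b$, $a-b+c\geq p_{0}$ this is part (i) of Theorem \ref{MT-1};
\item in the case $c\geq 2b$, $0<a-b+c<p_{0}\leq p_{0}^{\ast }$ this is part (ii).
\end{itemize}
Hence $\left[ F\left( -a,b;c;x\right) \right] ^{1/a}>M_{p_{0}}\left( 1-x,b/c\right) $ on $\left( 0,1\right) $. Since $p\mapsto M_{p}\left( 1-x,b/c\right) $ is increasing, $M_{p}\leq M_{p_{0}}$ for $p\leq p_{0}$, and (\ref{Hm>Pm}) follows. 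Part (ii) is symmetric: Theorem \ref{MT-2} (i) or (ii) yields (\ref{Hm<Pm-p0}), and for $p\geq p_{0}$ we have $M_{p}\geq M_{p_{0}}$.

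\emph{Necessity.} This step is local at $x=0$ and should be the only computation. We compare Taylor expansions up to second order.

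For the hypergeometric mean,
\begin{equation*}
\tfrac{1}{a}\ln F\left( -a,b;c;x\right) =-\tfrac{b}{c}x+\tfrac{1}{a}\Bigl[ \tfrac{-a(1-a)b(b+1)}{2c(c+1)}-\tfrac{a^{2}b^{2}}{2c^{2}}\Bigr] x^{2}+O(x^{3}).
\end{equation*}
For the power mean, with $w=b/c$,
\begin{equation*}
\ln M_{p}\left( 1-x,w\right) =-wx-\tfrac{w}{2}\left[ 1-p(1-w)\right] x^{2}+O(x^{3}).
\end{equation*}
The first-order terms agree, so the sign of $\ln\left[ F\left( -a,b;c;x\right) \right] ^{1/a}-\ln M_{p}$ for small $x$ is the sign of the difference of the $x^{2}$ coefficients.

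This difference is $\tfrac{w(1-w)}{2}(p-p_{0})$. The paper's choice of $p_{0}=(a+c)/(1+c)$ as the value making it vanish confirms this; we verify it by simplifying the bracket.

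Consequently, if $p>p_{0}$ then $\left[ F\left( -a,b;c;x\right) \right] ^{1/a}<M_{p}\left( 1-x,b/c\right) $ for all sufficiently small $x>0$, which contradicts (\ref{Hm>Pm}). So in (i) necessarily $p\leq p_{0}$. Likewise, if $p<p_{0}$ the reverse strict inequality holds near $0$, which contradicts (\ref{Hm<Pm}); so in (ii) necessarily $p\geq p_{0}$.

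The main care needed is the sign bookkeeping in the second-order coefficient, including the case $a=0$. There we use the limiting form $\exp[-\sum (b)_n x^n/((c)_n n)]$ given in the introduction; its $x^{2}$ coefficient is $-b(b+1)/(2c(c+1))$, and the same conclusion results. We also note $0<w<1$ because $c>b>0$, so the factor $w(1-w)/2$ is positive and the sign is determined by $p-p_{0}$.
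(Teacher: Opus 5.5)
Your proposal is correct and is essentially the paper's own proof: sufficiency comes from Theorems \ref{MT-1} and \ref{MT-2} combined with the monotonicity of $p\mapsto M_{p}(1-x,b/c)$. Necessity comes from the $x^{2}$-term at $x=0$, which the paper records as $\lim_{x\to 0}x^{-2}\bigl[a^{-1}\ln F(-a,b;c;x)-\ln M_{p}(1-x,b/c)\bigr]=-\frac{b(c-b)}{2c^{2}}(p-p_{0})$.

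One small sign slip: for $\ln [F(-a,b;c;x)]^{1/a}-\ln M_{p}$ the $x^{2}$ coefficient is $-\tfrac{w(1-w)}{2}(p-p_{0})$, not $+\tfrac{w(1-w)}{2}(p-p_{0})$ as you wrote. Your subsequent conclusion, that $p>p_{0}$ forces $[F]^{1/a}<M_{p}$ near $0$, already uses the correct sign, so the argument is unaffected.
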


\begin{proof}
The necessary condition for the first Hm-Pm inequality (\ref{Hm>Pm}) ((\ref%
{Hm<Pm})) to holds for $x\in \left( 0,1\right) $ follows from the limit
relation%
\begin{eqnarray*}
&&\lim\limits_{x\rightarrow 0}\dfrac{a^{-1}\ln F\left( -a,b;c;x\right) -\ln
M_{p}\left( 1-x,b/c\right) }{x^{2}} \\
&=&-\dfrac{b\left( c-b\right) }{2c^{2}}\left( p-\dfrac{a+c}{1+c}\right) \geq
\left( \leq \right) 0.
\end{eqnarray*}%
The sufficiency follows from Theorems \ref{MT-1} and \ref{MT-2}, thereby
completing the proof.
\end{proof}

\begin{remark}
\label{R-dp0-c-2b-1-2a}Note that $\delta \left( p_{0}\right) =a-b+c-p_{0}$
can be written as
\begin{equation}
\delta \left( p_{0}\right) =\frac{a+c}{c+1}\left( c-2b\right) +\frac{b}{c+1}%
\left[ c-\left( 1-2a\right) \right] .  \label{dp0-c-2b-1-2a}
\end{equation}
Then $\delta \left( p_{0}\right) \geq 0$ if $c\geq \max \left\{
2b,1-2a\right\} $ and $\delta \left( p_{0}\right) \leq 0$ if $c\leq \min
\left\{ 2b,1-2a\right\} $. By Theorem \ref{MT-3}, Richards' Theorem \ref{TR}
follows immediately. It thus can be seen that Richards' Theorem \ref{TR} is
a corollary of our main results.
\end{remark}

The next corollary gives an answer to Problem \ref{P-CTR}, which is a
perfect complement to Ricards' Theorem \ref{TR}.

\begin{corollary}
\label{C-c-2b,1-2a}Suppose that $a<1$, $b>0$, $c>\max \left\{ -a,b\right\} $
and $\min \left\{ 2b,1-2a\right\} <c<\max \left\{ 2b,1-2a\right\} $ Let $%
p_{0}$ and $p_{0}^{\ast }$ be defined by (\ref{p0}) and (\ref{p0*}),
respectively, and%
\begin{equation*}
c_{0}=\frac{1}{2}\sqrt{\left( b-a\right) ^{2}+4b}+\frac{1}{2}\left(
b-a\right) .
\end{equation*}%
(i) If $a+b<1/2$ and $c_{0}\leq c<1-2a$, or $a+b<1/2$, $2b<c<c_{0}$ and $%
p_{0}\leq p_{0}^{\ast }$ if $a-b+c>0$, then the first Hm-Pm inequality (\ref%
{Hm>Pm}) holds if and only if $p\leq p_{0}$.

(ii) If $a+b>1/2$ and $1-2a<c\leq c_{0}$, or $a+b>1/2$, $c_{0}\leq c\leq 2b$
and $p_{0}\geq p_{0}^{\ast }$ if $a-b+c>0$, then the  first Hm-Pm inequality
(\ref{Hm<Pm}) holds if and only if $p\geq p_{0}$.
\end{corollary}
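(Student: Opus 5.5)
The corollary should follow from Theorem \ref{MT-3} once the sign of $\delta(p_0)=a-b+c-p_0$ is known in the strip $\min\{2b,1-2a\}<c<\max\{2b,1-2a\}$. Clearing the denominator $1+c>0$,
\begin{equation*}
(1+c)\,\delta(p_0)=(a-b+c)(1+c)-(a+c)=c^{2}+(a-b)c-b .
\end{equation*}
This quadratic in $c$ has roots $c_0$ and $c_0'=\frac{1}{2}(b-a)-\frac{1}{2}\sqrt{(b-a)^2+4b}<0$. Since $c>\max\{-a,b\}\ge 0$ (because $b>0$), we get $\operatorname{sgn}\delta(p_0)=\operatorname{sgn}(c-c_0)$ on the whole admissible range. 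Moreover, by (\ref{dp0-c-2b-1-2a}), $\delta(p_0)$ is a combination with positive weights of $c-2b$ and $c-(1-2a)$. So at $c=\min\{2b,1-2a\}$ it is $\le 0$, and at $c=\max\{2b,1-2a\}$ it is $\ge 0$. Hence $c_0$ lies in the closed interval between $2b$ and $1-2a$.

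For part (i), assume $a+b<1/2$, i.e.\ $2b<1-2a$, so the strip is $2b<c<1-2a$ and in particular $c>2b$.
\begin{itemize}
\item If $c_0\le c<1-2a$, then $\delta(p_0)\ge 0$, i.e.\ $a-b+c\ge p_0$. We are then in the first alternative of Theorem \ref{MT-3}(i), which gives (\ref{Hm>Pm}) if and only if $p\le p_0$.
\item If $2b<c<c_0$, then $a-b+c<p_0$, and the second alternative of Theorem \ref{MT-3}(i) applies. It requires $0<a-b+c$ and $p_0\le p_0^{\ast}$.
\end{itemize}

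Part (ii) is symmetric. Here $a+b>1/2$ gives $1-2a<c<2b$ (with the endpoint $c=2b$ allowed in the stated range, where $c\le 2b$ still holds).
\begin{itemize}
\item If $1-2a<c\le c_0$, then $\delta(p_0)\le 0$, i.e.\ $a-b+c\le p_0$. The first alternative of Theorem \ref{MT-3}(ii) yields (\ref{Hm<Pm}) if and only if $p\ge p_0$.
\item If $c_0\le c\le 2b$ (the boundary case $c=c_0$ is also covered by the previous item), then $a-b+c\ge p_0$. Where the inequality is strict, the second alternative of Theorem \ref{MT-3}(ii) applies and needs $p_0\ge p_0^{\ast}$.
\end{itemize}

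The main obstacle is the clause ``if $a-b+c>0$'' in the sub-case $2b<c<c_0$ of part (i). Theorem \ref{MT-3}(i) requires positivity of $a-b+c$. Inspecting Theorem \ref{MT-1}(ii), this positivity is in fact necessary there, since otherwise $C_{p_0}$ is degenerate (it equals $0$ because $F(-a,b;c;1)$ vanishes or the behaviour at $1$ changes). So I would read the hypothesis as ``$a-b+c>0$ and $p_0\le p_0^{\ast}$'' and rely on Theorem \ref{MT-1}(ii) for the equivalence. For part (ii), Theorem \ref{MT-2}(ii) needs only $p_0\ge p_0^{\ast}$. When $a-b+c\le 0$ the condition on $p_0^{\ast}$ is not even defined through the $\Gamma$-ratio, and I would note that $a-b+c>0$ is then automatic from $a-b+c\ge p_0>0$.

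The remaining check is routine: the necessity direction ($p\le p_0$, respectively $p\ge p_0$) is exactly the $x\to0$ limit already used in the proof of Theorem \ref{MT-3}.
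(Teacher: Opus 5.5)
Your proposal is correct and follows essentially the paper's own argument. The paper also writes $\delta(p_0)=\rho(c)=\bigl(c^{2}+(a-b)c-b\bigr)/(c+1)$ and locates its unique zero $c_0$, using $\rho'(c)>0$ and the endpoint values $\rho(2b)=\frac{b}{2b+1}(2a+2b-1)$, $\rho(1-2a)=-\frac{1}{2}(2a+2b-1)$ where you use the root factorization, and then reads off the sub-cases from Theorem \ref{MT-3}. Your reading of the clause ``if $a-b+c>0$'' as the requirement $0<a-b+c$, which is automatic in part (ii), is exactly what that theorem, and hence the paper's one-line conclusion, implicitly needs.
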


\begin{proof}
We denote%
\begin{equation*}
\rho \left( c\right) =\delta \left( p_{0}\right) =a-b+c-\frac{a+c}{1+c}=%
\frac{\left( c-b+c+a\right) ^{2}-\left[ \left( b-a\right) ^{2}+4b\right] }{%
4\left( c+1\right) }.
\end{equation*}%
Since $b>0$ and $c>\max \left\{ -a,b\right\} $, $\rho \left( c\right) $ has
a unique zero $c_{0}$. Differentiation yields%
\begin{equation*}
\rho ^{\prime }\left( c\right) =\frac{\left( c^{2}+c\right) +\left(
c+a\right) }{\left( c+1\right) ^{2}}>0
\end{equation*}%
with%
\begin{eqnarray*}
\rho \left( 2b\right) &=&\frac{b}{2b+1}\left( 2a+2b-1\right) , \\
\rho \left( 1-2a\right) &=&-\frac{1}{2}\left( 2a+2b-1\right) .
\end{eqnarray*}%
Case 1: $a+b<1/2$. Then $2b<c<1-2a$. Since $\rho \left( 2b\right) <0$ and $%
\rho \left( 1-2a\right) >0$, we see that $\rho \left( c\right) =\delta
\left( p_{0}\right) <0$ for $c\in \left( 2b,c_{0}\right) $ and $\rho \left(
c\right) =\delta \left( p_{0}\right) >0$ for $c\in \left( c_{0},1-2a\right) $%
.

Case 2: $a+b>1/2$. Then $1-2a<c<2b$. Similarly, it is seen that $\rho \left(
c\right) =\delta \left( p_{0}\right) <0$ for $c\in \left( 1-2a,c_{0}\right) $
and $\rho \left( c\right) =\delta \left( p_{0}\right) >0$ for $c\in \left(
c_{0},2b\right) $.

Using Theorem \ref{MT-3} the required results follow, which completes the
proof.
\end{proof}

Letting $c=b-a$. That $b-a>\max \left\{ -a,b\right\} $ implies $a<0$ and
that $c\leq 2b$ implies $b\geq -a>0$. Replacing $-a$ by $a$ in Theorem \ref%
{MT-2} we obtain a sharp upper bound for the zero-balanced hypergeometric
mean in term of power mean.

\begin{corollary}
\label{C2-T2}Let $b\geq a>0$. Then the inequality%
\begin{equation}
\left[ F\left( a,b;b+a;x\right) \right] ^{-1/a}<M_{p}\left( 1-x,\dfrac{b}{a+b%
}\right)  \label{ZHm<Mp}
\end{equation}%
holds for $x\in \left( 0,1\right) $ if and only if $p\geq b/\left(
1+a+b\right) $. In particular, we have%
\begin{equation}
\left[ F\left( b,b;2b;x\right) \right] ^{-1/b}<M_{b/\left( 1+2b\right)
}\left( 1-x,\dfrac{1}{2}\right)  \label{ZHm<Mp-a=b}
\end{equation}%
for $x\in \left( 0,1\right) $ with the best constant $b/\left( 1+2b\right) $.
\end{corollary}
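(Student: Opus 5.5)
The plan is to specialize Theorem \ref{MT-2} (equivalently Theorem \ref{MT-3}(ii)) to $c=b-a$ and then rename $-a$ as $a$. Write the original parameters as $(a',b,c)=(-a,b,b+a)$ with $b\ge a>0$.

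First, check the standing hypotheses of Theorem \ref{MT-2}.
\begin{itemize}
\item $a'=-a<1$ and $b>0$ hold.
\item $c=b+a>\max\{-a',b\}=\max\{a,b\}=b$ holds because $a>0$.
\item $c\le 2b$ is exactly $a\le b$, which is assumed.
\end{itemize}

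Second, compute the relevant quantities.
\begin{itemize}
\item $a'-b+c=0$.
\item $p_0=(a'+c)/(1+c)=b/(1+a+b)>0$.
\end{itemize}
Hence $a'-b+c=0<p_0$, so we are in case (i) of Theorem \ref{MT-2}. That case needs no comparison with $p_0^{\ast}$. This matters here: $c=b-a'$ is exactly the zero-balanced boundary, where $F(1)$ diverges and $p_0^\ast$ degenerates.

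Third, rewrite the inequality. Theorem \ref{MT-2}(i) gives $[F(-a',b;c;x)]^{1/a'}<M_{p_0}(1-x,b/c)$ on $(0,1)$. In the new notation this reads $[F(a,b;a+b;x)]^{-1/a}<M_{p_0}(1-x,b/(a+b))$. For $p\ge p_0$, the monotonicity of $p\mapsto M_p$ extends the inequality to all such $p$.

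Fourth, prove necessity of $p\ge p_0$. Use the second-order expansion at $x=0$ from the proof of Theorem \ref{MT-3}:
\[
\lim_{x\to0}\frac{a'^{-1}\ln F-\ln M_p}{x^2}=-\frac{b(c-b)}{2c^2}(p-p_0).
\]
Here $c-b=a>0$, so the coefficient is nonzero. If $p<p_0$, the left side of \eqref{ZHm<Mp} exceeds the right side for small $x$.

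Fifth, the particular case \eqref{ZHm<Mp-a=b} follows by setting $a=b$. Then the weight is $1/2$ and $p_0=b/(1+2b)$.

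The main potential obstacle is confirming that Theorem \ref{MT-2}(i) genuinely covers the boundary value $a'-b+c=0$. Its statement only requires $a'-b+c\le p_0$, unlike the double inequalities that require $a-b+c>0$, so this should hold. The degeneracy of $C_{p_0}$ at this boundary does not enter case (i).
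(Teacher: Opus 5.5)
Your proposal is correct and follows the paper's own route. The paper sets $c=b-a$ (with $a<0$) in Theorem~\ref{MT-2}(i), where $a-b+c=0<p_0$ so $p_0^{\ast}$ never enters, and then replaces $-a$ by $a$; your extension to all $p\ge p_0$ by monotonicity of $M_p$, and your sharpness argument via the $x^2$-expansion at $x=0$, are exactly the argument of Theorem~\ref{MT-3}(ii), which the paper leaves implicit and you make explicit.
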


\begin{remark}
The inequality (\ref{ZHm<Mp}) is clearly a generalization of Barnard and
Richards' result in \cite[Proposition 2.1]{Barnard-JIPAM-8-2007}.
\end{remark}

\subsection{Applications}

We list several examples to illustrate applications of our main results.

\begin{example}
\label{E1-c=2b}Suppose $a<1$. If $a\geq -1/2$, then for all $x\in (0,1)$,%
\begin{equation}
\left[ F\left( -a,1;2;x\right) \right] ^{1/a}\geq M_{p}\left( 1-x,\frac{1}{2}%
\right)  \label{Hm<Pm--a,1,2}
\end{equation}%
holds if and only if $p\leq \left( a+2\right) /3$. If $a\leq -1/2$, then the
inequality (\ref{Hm<Pm--a,1,2}) reveses if and only if $p\geq \left(
a+2\right) /3$.
\end{example}

\begin{proof}
A simple computation yields that $c=2b$, $p_{0}=\left( a+2\right) /3$, $%
\delta \left( p_{0}\right) =a-b+c-p_{0}=\left( 2a+1\right) /3$ and%
\begin{equation*}
p_{0}^{\ast }=\frac{a\ln 2}{\ln \left( 1+a\right) }\text{ if }a\neq 0\text{
\ and \ }p_{0}^{\ast }=\ln 2\text{ \ for }a\in \left( -1,1\right) \text{.}
\end{equation*}%
We first show that%
\begin{equation}
p_{0}<p_{0}^{\ast }\text{ for }a\in \left( -1/2,1\right) \text{ \ and \ }%
p_{0}>p_{0}^{\ast }\text{ for }a\in \left( -1,-1/2\right) .  \label{p0<>p0*}
\end{equation}%
In fact, since $p_{0}^{\ast }$ is clearly positive for $a>-1$, it suffices
to prove that%
\begin{equation*}
\frac{p_{0}}{p_{0}^{\ast }}=\frac{1}{3\ln 2}\frac{\left( a+2\right) \ln
\left( a+1\right) }{a}\text{, }
\end{equation*}%
is increasing for $a\in \left( 0,\infty \right) $ and decreasing for $a\in
\left( -1,0\right) $. Differentiation yields%
\begin{eqnarray*}
\left( 3a^{2}\ln 2\right) \frac{d}{da}\frac{p_{0}}{p_{0}^{\ast }} &=&\frac{%
a\left( a+2\right) }{\left( a+1\right) }-2\ln \left( a+1\right) =\mu \left(
a\right) , \\
\mu ^{\prime }\left( a\right) &=&\frac{a^{2}}{\left( a+1\right) ^{2}}\geq 0%
\text{.}
\end{eqnarray*}%
This together with the facts that $\left. \left( p_{0}/p_{0}^{\ast }\right)
\right\vert _{a=1}=\left. \left( p_{0}/p_{0}^{\ast }\right) \right\vert
_{a=-1/2}=1$ leads to the inequalities (\ref{p0<>p0*}).

(i) If $a\geq -1/2$, then $\delta \left( p_{0}\right) =a-b+c-p_{0}\geq 0$
and $c=2b$, but the condition that \textquotedblleft $0<a-b+c<p_{0}\leq
p_{0}^{\ast }$ and $c\geq 2b$\textquotedblright\ is not satisfied. using
Theorem \ref{MT-3} (i), the inequality (\ref{Hm<Pm--a,1,2}) holds for $x\in
\left( 0,1\right) $ if and only if $p\leq p_{0}=\left( a+2\right) /3$

(ii) If $a\leq -1/2$, then $\delta \left( p_{0}\right) =a-b+c-p_{0}\leq 0$
and $c=2b$, but the condition that \textquotedblleft $a-b+c>p_{0}\geq
p_{0}^{\ast }$ and $c\leq 2b$\textquotedblright\ is not satisfied. using
Theorem \ref{MT-3} (ii), the inequality (\ref{Hm<Pm--a,1,2}) reverses if and
only if $p\geq p_{0}=\left( a+2\right) /3$

This completes the proof.
\end{proof}

\begin{remark}
\label{R-E1-fr}By Theorem \ref{T-fp-m}, if $a\geq \left( \leq \right) -1/2$
then the function%
\begin{equation*}
x\mapsto \frac{\left[ F\left( -a,1;2;x\right) \right] ^{1/a}}{M_{p}\left(
1-x,1/2\right) }
\end{equation*}%
is increasing (decreasing) on $\left( 0,1\right) $. Further, if $a\in \left(
-1,1\right) $, we have
\begin{equation}
M_{p}\left( 1-x,\frac{1}{2}\right) <\left[ F\left( -a,1;2;x\right) \right]
^{1/a}<C_{p}M_{p}\left( 1-x,\frac{1}{2}\right)  \label{Hm-Pm-Cp}
\end{equation}%
for $x\in \left( 0,1\right) $, where $C_{p}=2^{1/p}\left( a+1\right) ^{-1/a}$
if $a\neq 0$ and $C_{p}=2^{1/p}/e$ if $a=0$.
\end{remark}

\begin{remark}
Putting $a=-1$ in Example \ref{E1-c=2b} gives%
\begin{equation*}
\left[ F\left( 1,1;2;x\right) \right] ^{-1}<M_{1/3}\left( 1-x,\frac{1}{2}%
\right) .
\end{equation*}%
Note that%
\begin{equation*}
\left[ F\left( 1,1;2;1-x\right) \right] ^{-1}=\frac{x-1}{\ln x}=L\left(
x,1\right)
\end{equation*}%
is the logarithmic mean of $1$ and $x\in \left( 0,1\right) $. So this
inequality can be written as $L\left( x,1\right) <M_{1/3}\left( x,1/2\right)
$ for $x\in \left( 0,1\right) $, which is known as Lin's inequality \cite%
{Ling-AMM-81-1974} (see also \cite{Yang-JIA-2008-149286}, \cite%
{Yang-JIA-2013-116}).
\end{remark}

\begin{remark}
Letting $a=0$ in Example \ref{E1-c=2b} gives%
\begin{equation}
\lim_{a\rightarrow 0}\left[ F\left( -a,1;2;x\right) \right]
^{1/a}>M_{2/3}\left( 1-x,\frac{1}{2}\right)  \label{Hm>Mp0-a=0}
\end{equation}%
for $x\in \left( 0,1\right) $ with the best constant $p_{0}=2/3$. Note that%
\begin{eqnarray*}
\lim_{a\rightarrow 0}\left[ F\left( -a,1;2;x\right) \right] ^{1/a} &=&\exp %
\left[ -\sum_{n=1}^{\infty }\frac{x^{n}}{n\left( n+1\right) }\right] \\
&=&\exp \left[ -\frac{\left( 1-x\right) \ln \left( 1-x\right) }{x}-1\right]
=I\left( 1,1-x\right) ,
\end{eqnarray*}%
where $I\left( 1,1-x\right) $ is the exponential mean of $1$ and $\left(
1-x\right) \in \left( 0,1\right) $. Then the inequality (\ref{Hm>Mp0-a=0})
can be written as%
\begin{equation*}
I\left( 1,x\right) >M_{2/3}\left( x,\frac{1}{2}\right)
\end{equation*}%
for $x\in \left( 0,1\right) $, which was proven in \cite%
{Pittinger-UBPEFSMF-680-1980}. (see also \cite{Yang-MIA-10-2007}). Further,
by (\ref{Hm-Pm-Cp}) we have%
\begin{equation*}
M_{2/3}\left( x,\frac{1}{2}\right) <I\left( 1,x\right) <C_{2/3}M_{2/3}\left(
x,\frac{1}{2}\right)
\end{equation*}%
for $x\in \left( 0,1\right) $, where $C_{2/3}=2\sqrt{2}e^{-1}=1.040...$ is
the best constant (see \cite{Yang-MIA-10-2007}).
\end{remark}

\begin{example}
Taking $\left( a,b,c\right) =\left( 1/2,1/2,1\right) $ yields%
\begin{equation*}
c-2b=0\text{ \ and \ }a-b+c=1>p_{0}=\frac{3}{4}.
\end{equation*}%
Using Theorem \ref{MT-1} (i) yields%
\begin{equation*}
\left[ F\left( -\frac{1}{2},\frac{1}{2};1;x\right) \right]
^{2}>M_{3/4}\left( 1-x,\frac{1}{2}\right)
\end{equation*}%
for $x\in \left( 0,1\right) $ with the best constants $p_{0}=3/4$. This
inequality was proven in \cite{Barnard-JMA-31-2000}. Moreover, by Theorem %
\ref{MT-m} (i) we also have%
\begin{equation*}
M_{3/4}\left( 1-x,\frac{1}{2}\right) <\left[ F\left( -\frac{1}{2},\frac{1}{2}%
;1;x\right) \right] ^{2}<C_{3/4}M_{3/4}\left( 1-x,\frac{1}{2}\right)
\end{equation*}%
for $x\in \left( 0,1\right) $, where $C_{3/4}=8\sqrt[3]{2}/\pi ^{2}=1.021...$
is the best constant.
\end{example}

\begin{example}
Taking $\left( a,b\right) =\left( 1/2,1/2\right) $ in Corollary \ref{C2-T2}
gives%
\begin{equation*}
F\left( \frac{1}{2},\frac{1}{2};1;x\right) ^{-2}<M_{1/4}\left( 1-x,\frac{1}{2%
}\right)
\end{equation*}%
for $x\in \left( 0,1\right) $ with the best constant $1/4$. This inequality
is equivalent to%
\begin{equation*}
AGM\left( x,1\right) <M_{1/2}\left( x,\frac{1}{2}\right)
\end{equation*}%
for $x\in \left( 0,1\right) $ (see \cite{Borwein-PA-NY-1974}).
\end{example}

\begin{example}
Taking $\left( a,b,c\right) =\left( -3/2,1/2,3\right) $ leads to%
\begin{equation*}
p_{0}=\frac{3}{8}\text{, \ }c-2b=2>0\text{ \ and \ }a-b+c-p_{0}=\frac{5}{8}>0
\end{equation*}%
Using Theorem \ref{MT-1} (i) gives%
\begin{equation*}
\left[ F\left( \frac{3}{2},\frac{1}{2};3;x\right) \right] ^{-2/3}>M_{3/8}%
\left( 1-x,\frac{1}{6}\right)
\end{equation*}%
for $x\in \left( 0,1\right) $ with the best constants $p_{0}=3/8$.
\end{example}

\begin{example}
Taking $\left( a,b,c\right) =\left( -1/2,9/4,3\right) $. Then%
\begin{equation*}
p_{0}=\frac{5}{8}\text{, \ }c-2b=-\frac{3}{2}<0\text{ \ and \ }a-b+c-p_{0}=-%
\frac{3}{8}<0
\end{equation*}%
By Theorem \ref{MT-2} (ii) we obtain that%
\begin{equation*}
\left[ F\left( \frac{1}{2},\frac{9}{4};3;x\right) \right] ^{-2}<M_{p}\left(
1-x,\frac{3}{4}\right)
\end{equation*}%
for $x\in \left( 0,1\right) $ with the best constants $p_{0}=5/8$.
\end{example}

\begin{remark}
In the above two examples, $\min \left\{ 2b,1-2a\right\} <c<\max \left\{
2b,1-2a\right\} $. Richards' Theorem \ref{TR} does not work.
\end{remark}

The following example is an application of Corollary \ref{C-c-2b,1-2a} in
the case of $\min \left\{ 2b,1-2a\right\} <c<\max \left\{ 2b,1-2a\right\} $.

\begin{example}
Taking $\left( a,b,c\right) =\left( -9/8,2/3,2\right) $. Then $b>0$, $%
c=2>9/8=\max \left\{ b,-a\right\} $ and%
\begin{equation*}
\frac{4}{3}=2b<c=2<1-2a=\frac{13}{4}.
\end{equation*}%
An easy computation leads to $a-b+c=5/24>0$, $p_{0}=7/24=0.291...$ and%
\begin{equation*}
p_{0}^{\ast }=-\frac{9}{8}\frac{\ln \left( 2/3\right) }{\ln \frac{\Gamma
\left( 2\right) \Gamma \left( 5/24\right) }{\Gamma \left( 4/3\right) \Gamma
\left( 7/8\right) }}=0.302...,
\end{equation*}%
then $p_{0}<p_{0}^{\ast }$. Also,%
\begin{equation*}
c_{0}=\frac{1}{2}\sqrt{\left( b-a\right) ^{2}+4b}+\frac{1}{2}\left(
b-a\right) =\frac{43+\sqrt{3385}}{48}=2.107...\text{.}
\end{equation*}%
By Corollary \ref{C-c-2b,1-2a} (i) we see that the inequality%
\begin{equation*}
\left[ F\left( \frac{9}{8},\frac{2}{3};2;x\right) \right] ^{-8/9}>M_{p}%
\left( 1-x;\frac{1}{3}\right)
\end{equation*}%
holds for $x\in \left( 0,1\right) $ if and only if $p\leq 7/24$.
\end{example}

\begin{remark}
In fact, if we apply Theorem \ref{MT-3} (i), then the required inequality is
immediate since it is easy to verify that $c\geq 2b$ and $%
0<a-b+c<p_{0}<p_{0}^{\ast }$.
\end{remark}

\section{Two tools}

To prove our main results, we need two tools. The first tool is the NP and
PN-type power series and their signs rule. Recall that if every term of a
real sequence is nonnegative (nonpositive) and at least one is non-zero,
then this sequence is called a \textbf{positive (negative) sequence}. Then
the so-called the NP and PN-type sequences can be defined as follows.

\begin{definition}
\label{D-NPs}Let $m\in \mathbb{N}$. A real sequence $\left\{ a_{n}\right\}
_{n\geq 0}$ is called an negative-positive-type sequence, NP-type sequence
for short, if the subsequences $\left\{ a_{n}\right\} _{0\leq n\leq m}$ and $%
\left\{ a_{n}\right\} _{n>m}$ are negative and positive sequences,
respectively. $\left\{ -a_{n}\right\} _{n\geq 0}$ is called a
positive-negative-type sequence, PN-type sequence for short.
\end{definition}

Based on Definition \ref{D-NPs}, the NP and PN-type power series defined in
\cite{Yang-RACSAM-115-2021-8} can be redefined as follows.

\begin{definition}
\label{D-NPp}If a real sequence $\left\{ a_{n}\right\} _{n\geq 0}$ is
NP-type, then the normal power series $S\left( t\right) =\sum_{k=0}^{\infty
}a_{k}t^{k}$ is called an NP-type power series, and $-S\left( t\right) $ is
called a PN-type power series.
\end{definition}

For the NP and PN-type power series, a simple but efficient criterion to
determine their signs has been proven in \cite{Yang-JMAA-462-2018}, \cite%
{Yang-AADM-13-2019}, which is a revised version of the electronic preprint
\cite{Yang-arXiv-1705.05703}.

\begin{proposition}
\label{P-sgnS}Let $S\left( t\right) $ be an NP-type power series converging
on the interval $\left( 0,r\right) $ ($r>0$). (i) If $S\left( r^{-}\right)
\leq 0$, then $S\left( t\right) <0$ for all $t\in \left( 0,r\right) $. (ii)
If $S\left( r^{-}\right) >0$, then there is a unique $t_{0}\in \left(
0,r\right) $ such that $S\left( t\right) <0$ for $t\in \left( 0,t_{0}\right)
$ and $S\left( t\right) >0$ for $t\in \left( t_{0},r\right) $.
\end{proposition}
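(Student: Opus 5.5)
Let $\{a_n\}$ be NP-type with switching index $m$: $a_n\le 0$ for $0\le n\le m$ and $a_n\ge 0$ for $n>m$, with each block not identically zero. The plan is to use the standard normalization trick. Divide $S(t)$ by $t^m$ and consider
\begin{equation*}
T\left( t\right) =\frac{S\left( t\right) }{t^{m}}=\sum_{k=0}^{m}a_{k}t^{k-m}+\sum_{k=m+1}^{\infty }a_{k}t^{k-m},\quad t\in \left( 0,r\right) .
\end{equation*}
The first sum is a finite combination of nonpositive coefficients times nonpositive powers $t^{k-m}$ ($k-m\le 0$). Each such term $a_kt^{k-m}$ is nondecreasing in $t$, and strictly increasing when $a_k<0$ and $k<m$. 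The second sum has nonnegative coefficients times positive powers, so it is nondecreasing, and strictly increasing since some $a_k>0$ with $k>m$. Hence $T$ is strictly increasing on $(0,r)$. Moreover $S$ and $T$ have the same sign on $(0,r)$.

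Next I determine the boundary behaviour of $T$. As $t\to 0^+$, $T(t)\to a_m$ if $a_0=\dots=a_{m-1}=0$, and $T(t)\to-\infty$ if some $a_k<0$ with $k<m$. If $a_m=0$ as well, then $T(0^+)$ is still $\le 0$. In fact, when only $a_m$ is nonzero among the first block, $T(0^+)=a_m<0$. In every case $T(0^+)\le 0$, and combined with strict monotonicity this gives $T(t)<0$ near $0$ unless the negative block vanishes. The definition excludes that, since the first block contains a nonzero term. At the right end, $T(r^-)$ exists in $(-\infty,\infty]$ because $T$ is monotone, and $\operatorname{sgn}T(r^-)=\operatorname{sgn}S(r^-)$ since $r^{-m}>0$ (if $r=\infty$, the sign of the limit is still preserved).

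Now both parts follow. For (i), if $S(r^-)\le 0$ then $T(r^-)\le 0$, and strict increase forces $T(t)<T(r^-)\le 0$ for all $t\in(0,r)$, so $S<0$. For (ii), if $S(r^-)>0$ then $T$ is continuous, strictly increasing, negative near $0$ and eventually positive. By the intermediate value theorem it has exactly one zero $t_0$, with $T<0$ on $(0,t_0)$ and $T>0$ on $(t_0,r)$, and the same holds for $S$.

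The only delicate point is the strictness of monotonicity of $T$ and the sign near $0$. For strictness I use that the positive tail has some $a_k>0$ with $k>m$. I also need to verify that termwise differentiation (or monotonicity) of the tail is legitimate within the radius of convergence, which holds since power series converge uniformly on compacts of $(0,r)$.
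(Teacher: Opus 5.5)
Your proof is correct and is the standard argument for this signs rule; the paper gives no proof of its own and only cites Yang et al. Normalizing to $T(t)=t^{-m}S(t)$ makes the function strictly increasing, and the endpoint limits then settle both parts. One step needs restating: at $0^+$ you need $T(0^+)<0$ strictly, since $T(0^+)\le 0$ together with monotonicity would not give negativity near $0$. This strict inequality holds because either some $a_k<0$ with $k<m$, so $T(0^+)=-\infty$, or $a_0=\cdots=a_{m-1}=0$, in which case the definition forces $a_m<0$ and $T(0^+)=a_m<0$; the case ``$a_m=0$ as well'' cannot occur.
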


\begin{remark}
If $r=\infty $, then Proposition \ref{P-sgnS} is reduced to \cite[Lemma 6.3]%
{Belzunce-I:M-E-40-2007} (see also \cite[Lemma 2]{Yang-JIA-157-2015}, \cite[%
Lemma 2.1]{Yang-JIA-299-2015}).
\end{remark}

\begin{remark}
\label{R-S><0}For an NP-type power series, it follows from Proposition \ref%
{P-sgnS} that if there is a $t_{1}\in \left( 0,r\right) $ such that $S\left(
t_{1}\right) <0$, then we have $S\left( t\right) <0$ for all $t\in \left(
0,t_{1}\right) $; if there is a $t_{2}\in \left( 0,r\right) $ such that $%
S\left( t_{2}\right) >0$, then we have $S\left( t\right) >0$ for all $t\in
\left( t_{2},r\right) $. For a PN-type power series $-S\left( t\right) $,
the signs of $-S\left( t\right) $ on $\left( 0,t_{1}\right) $ and $\left(
t_{2},r\right) $ are reversed.
\end{remark}

\begin{remark}
In Definition \ref{D-NPs}, if there is a $k_{0}\in \mathbb{N}$ such that $%
a_{m+k_{0}}\neq 0$ and $a_{k}=0$ for $k>m+k_{0}$, then $S\left( t\right)
=\sum_{k=0}^{\infty }a_{k}t^{k}$ is in fact a polynomial of degree $m+k_{0}$%
, and we call it as NP-type polynomial. And, Proposition \ref{P-sgnS} yields
\cite[Lemma 7]{Yang-AAA-2014-702718}, which offers a signs rule of NP and
PN-type polynomials, see also \cite{Yang-JIA-2017-210},\ \cite%
{Yang-JMI-12-2018}.
\end{remark}

Proposition \ref{P-sgnS} has been applied in the study for certain special
functions, see for example, \cite{Belzunce-I:M-E-40-2007}, \cite%
{Yang-JIA-157-2015}, \cite{Yang-MIA-21-2018-2}, \cite{Yang-MIA-21-2018-4},
\cite{Yang-JMAA-467-2018}, \cite{Yang-AMC-348-2019}, \cite%
{Yang-JMAA-470-2019}.

The second tool is a recurrence relation involving the hypergeometric
functions, which first appeared in \cite[Corollary 2]{Yang-arxiv-2204.04709}%
. Several special cases of Proposition \ref{P-PH-rr} can be seen in \cite%
{Yang-JMAA-467-2018}, \cite{Yang-RACSAM-115-2021}, \cite%
{Zhao-RACSAM-117-2023}.

\begin{proposition}[{\protect\cite[Corollary 2]{Yang-arxiv-2204.04709}}]
\label{P-PH-rr}Let $\left( a-1\right) \left( c-a\right) \neq 0$ and $%
-c\notin \mathbb{N\cup }\left\{ 0\right\} $. Then for $p\in \mathbb{R}$, we
have%
\begin{equation}
U\left( x\right) =\left( 1-x\right) ^{p}F\left( a,b;c;x\right)
=\sum_{n=0}^{\infty }u_{n}x^{n},  \label{U}
\end{equation}%
with $u_{0}=1$, $u_{1}=ab/c-p$ and for $n\geq 1$,%
\begin{eqnarray*}
u_{n+1} &=&\frac{2n^{2}+\left( a+b+c-2p-1\right) n+ab-cp}{\left( n+1\right)
\left( n+c\right) }u_{n} \\
&&-\frac{\left( n+a-p-1\right) \left( n+b-p-1\right) }{\left( n+1\right)
\left( n+c\right) }u_{n-1}.
\end{eqnarray*}
\end{proposition}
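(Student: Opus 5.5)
The plan is to prove Proposition~\ref{P-PH-rr} by turning the hypergeometric differential equation for $F(a,b;c;x)$ into a differential equation for $U$, and then reading off the coefficient recurrence.

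\textbf{Step 1: the ODE for $U$.} Write $y=F(a,b;c;x)$, which satisfies
\begin{equation*}
x(1-x)y''+\left[ c-(a+b+1)x\right] y'-aby=0 .
\end{equation*}
Substitute $y=(1-x)^{-p}U$, so that
\begin{equation*}
y'=(1-x)^{-p}\left[ U'+\frac{p}{1-x}U\right],
\end{equation*}
\begin{equation*}
y''=(1-x)^{-p}\left[ U''+\frac{2p}{1-x}U'+\frac{p(p+1)}{(1-x)^{2}}U\right].
\end{equation*}
Multiply the resulting equation by $(1-x)^{p+1}$ to clear the denominators. This gives a second-order equation for $U$ whose coefficients are polynomials of degree at most $2$:
\begin{equation*}
x(1-x)^{2}U''+(1-x)\left[ 2px+c-(a+b+1)x\right] U'+\left[ p(p+1)x+p\left( c-(a+b+1)x\right) -ab(1-x)\right] U=0 .
\end{equation*}

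\textbf{Step 2: coefficient extraction.} Insert $U=\sum u_{n}x^{n}$ and collect the coefficient of $x^{n}$. Because the polynomial coefficients have degree at most $2$, only $u_{n+1}$, $u_{n}$ and $u_{n-1}$ appear, so we get a three-term relation of the form
\begin{equation*}
A_{n}u_{n+1}+B_{n}u_{n}+C_{n}u_{n-1}=0 .
\end{equation*}

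\begin{itemize}
\item The leading coefficient comes from $xU''+cU'$, which gives $A_{n}=(n+1)(n+c)$. This is nonzero because $-c\notin\mathbb{N}\cup\{0\}$.
\item The $u_{n-1}$ coefficient comes from the $x^{2}$ parts, and should factor as $C_{n}=(n+a-p-1)(n+b-p-1)$. The reason is that the indicial-type polynomial at infinity of the transformed equation has roots shifted by $p$ from those of $F$, which are $a$ and $b$.
\item The middle coefficient $B_{n}$ should come out as $-\left[ 2n^{2}+(a+b+c-2p-1)n+ab-cp\right]$.
\end{itemize}

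Dividing by $(n+1)(n+c)$ then gives the stated recurrence. The initial values follow directly: $u_{0}=F(a,b;c;0)=1$, and $u_{1}=ab/c-p$ from expanding $(1-x)^{p}=1-px+\cdots$ against $1+(ab/c)x+\cdots$.

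\textbf{Main obstacle.} The hardest part is the bookkeeping for $B_{n}$ and the factorization of $C_{n}$. I would verify both by checking the two leading coefficients in $n$ and the constant term separately. As an independent cross-check I would test $n=1$ against the directly computed $u_{2}$ from the Cauchy product. The hypothesis $(a-1)(c-a)\neq0$ plays no role in this derivation; it is presumably inherited from the general form in \cite{Yang-arxiv-2204.04709}, so I would note that it is not needed here.
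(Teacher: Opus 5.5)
The paper does not prove this proposition; it quotes it from \cite{Yang-arxiv-2204.04709}. So there is no in-paper argument to compare with, and your derivation from the hypergeometric differential equation stands on its own as a complete proof. Your transformed equation is correct, and the coefficients you only predicted do come out as claimed.

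With $d=2p-a-b-1$, the zeroth-order coefficient simplifies to $(pc-ab)+(p-a)(p-b)x$. For $n\ge 1$ the coefficient of $x^{n}$ receives the following contributions:
\begin{align*}
x(1-x)^{2}U'' &:\quad n(n+1)u_{n+1}-2n(n-1)u_{n}+(n-1)(n-2)u_{n-1},\\
(1-x)(c+dx)U' &:\quad c(n+1)u_{n+1}+(d-c)n\,u_{n}-d(n-1)u_{n-1},\\
[(pc-ab)+(p-a)(p-b)x]\,U &:\quad (pc-ab)u_{n}+(p-a)(p-b)u_{n-1}.
\end{align*}
Put $m=n-1$. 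The $u_{n-1}$ terms sum to $m^{2}+(a+b-2p)m+(a-p)(b-p)=(n+a-p-1)(n+b-p-1)$. The $u_{n}$ terms sum to $-[2n^{2}+(a+b+c-2p-1)n+ab-cp]$. These are exactly your $C_n$ and $B_n$. The $x^{0}$ coefficient gives $cu_{1}=(ab-cp)u_{0}$, which matches $u_1=ab/c-p$. Termwise substitution is legitimate because $U$ is analytic on $|x|<1$.

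You are right that $(a-1)(c-a)\neq 0$ is never used. Only $c\notin\{0,-1,-2,\dots\}$ is needed, to divide by $(n+1)(n+c)$. This is worth more than a side remark. In Lemma~\ref{L-xin-rr} the paper applies the proposition with $(p,a,b,c)$ replaced by $(p-1,1-a,b,c+1)$. Under that substitution the quoted hypothesis becomes $a(a+c)\neq 0$, which would formally exclude the case $a=0$ that the main results also cover. Your argument gives the recurrence at $a=0$ directly, with no limiting argument.
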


\section{Lemmas}

Logarithmic differentiation for (\ref{fp}) yields%
\begin{eqnarray}
\frac{f_{p}^{\prime }\left( x\right) }{f_{p}\left( x\right) } &=&\frac{1}{a}%
\frac{\left( -ab/c\right) F\left( 1-a,b+1;c+1;x\right) }{F\left(
-a,b;c;x\right) }+\frac{1}{p}\frac{p\left( b/c\right) \left( 1-x\right)
^{p-1}}{\left( 1-b/c+\left( b/c\right) \left( 1-x\right) ^{p}\right) }
\notag \\
&=&\frac{b\left( c-b\right) }{c}\frac{g_{p}\left( x\right) }{\left(
c-b+b\left( 1-x\right) ^{p}\right) F\left( -a,b;c;x\right) },  \label{dfp}
\end{eqnarray}%
where%
\begin{equation}
\begin{array}{rl}
g_{p}\left( x\right) & =\dfrac{c}{c-b}\left( 1-x\right) ^{p-1}F\left(
-a,b;c;x\right) \bigskip \\
& -\left( 1+\dfrac{b}{c-b}\left( 1-x\right) ^{p}\right) F\left(
1-a,b+1;c+1;x\right) .%
\end{array}
\label{gp}
\end{equation}%
By Gauss contiguous relations \cite[p. 558]{Abramowitz-HMFFGMT-1972}, $%
g_{p}\left( x\right) $ can be simplified as%
\begin{equation}
g_{p}\left( x\right) =\left( 1-x\right) ^{p-1}F\left( 1-a,b;c+1;x\right)
-F\left( 1-a,b+1;c+1;x\right) \text{.}  \label{gp-a1}
\end{equation}%
Employing Cauchy product formula, $g_{p}\left( x\right) $ has the following
power series representation:%
\begin{equation*}
g_{p}\left( x\right) =\sum_{n=0}^{\infty }\left( u_{n}-v_{n}\right)
x^{n}=\sum_{n=0}^{\infty }\xi _{n}x^{n},
\end{equation*}%
where $\xi _{n}\equiv \xi _{n}\left( p\right) =u_{n}-v_{n}$ with%
\begin{eqnarray}
u_{n} &\equiv &u_{n}\left( p\right) =\sum_{k=0}^{n}\frac{\left( 1-p\right)
_{n-k}}{\left( n-k\right) !}\frac{\left( 1-a\right) _{k}\left( b\right) _{k}%
}{k!\left( c+1\right) _{k}},  \label{un} \\
v_{n} &=&\frac{\left( 1-a\right) _{n}\left( b+1\right) _{n}}{n!\left(
c+1\right) _{n}}  \label{vn}
\end{eqnarray}%
for all $n\geq 0$.

\begin{remark}
Evidently, $u_{n},v_{n}>0$ for all $n\geq 0$ if $1>a$, $c>b>0$ and $p\in
\left( 0,1\right) $.
\end{remark}

In this section, we first establish the recurrence relations of $\xi
_{n}=\xi _{n}\left( p\right) $, then deal with the signs of $\gamma
_{n}=\gamma _{n}\left( p_{0}\right) $, and finally determine the signs of $%
g_{p}\left( 1\right) $ and $g_{p_{0}}\left( 1\right) $.

\subsection{Recurrence relations of the coefficients in the power series of $%
g_{p}\left( x\right) $}

In order to confirm the changes of sign of $\xi _{n}$, we need to establish
the recurrence relation of the sequence $\left\{ \xi _{n}\right\} _{n\geq 0}$%
.

\begin{lemma}
\label{L-xin-rr}Let $u_{n}$ and $v_{n}$ be defined by (\ref{un}) and (\ref%
{vn}), respectively. The sequence $\xi _{n}\equiv \xi _{n}\left( p\right)
=u_{n}-v_{n}$ satisfies the recurrence relations%
\begin{equation}
\xi _{n+1}-\alpha _{n}\xi _{n}+\beta _{n}\xi _{n-1}=\tau _{n}\gamma
_{n}v_{n-1}  \label{xin-rr-0}
\end{equation}%
where\
\begin{eqnarray}
\alpha _{n} &=&\frac{2n^{2}+\left( c-a+b-2p+3\right) n+\left( 1-a\right)
b+\left( c+1\right) \left( 1-p\right) }{\left( n+1\right) \left(
n+c+1\right) },  \label{alphn} \\
\beta _{n} &=&\frac{\left( n+1-a-p\right) \left( n+b-p\right) }{\left(
n+1\right) \left( n+c+1\right) },  \label{betan}
\end{eqnarray}%
\begin{eqnarray}
\gamma _{n} &=&\gamma _{n}\left( p\right) =n^{2}p\left( a-b+c-p\right)
+ab\left( p\left( c+1\right) -\left( c+a\right) \right)  \label{gamn} \\
&&-n\left[ cp^{2}-\left( a-b+c+2ab\right) p+a\left( a+c\right) \right] ,
\notag \\
\tau _{n} &=&\left[ n\left( n+1\right) \left( n+c\right) \left( n+c+1\right) %
\right] ^{-1}.  \label{tn}
\end{eqnarray}%
Moreover, if $a<1,c>b>0$, $p\in \left( 0,1\right) $ then $\alpha _{n},\beta
_{n}>0$ for $n\geq 1$.
\end{lemma}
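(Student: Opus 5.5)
The plan is to obtain separate three-term recurrences for $u_n$ and for $v_n$, subtract them, and show that the mismatch is exactly $\tau_n\gamma_n v_{n-1}$.

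\textbf{Recurrence for $u_n$.} By (\ref{un}), $\sum_{n\ge0}u_nx^n=(1-x)^{p-1}F(1-a,b;c+1;x)$. This is the function $U$ of Proposition \ref{P-PH-rr} with $(a,b,c,p)$ replaced by $(1-a,b,c+1,1-p)$. The proposition's hypothesis $(a-1)(c-a)\neq0$ becomes $-a(c+a)\ne 0$; the case $a=0$ can be handled by continuity in $a$, since both sides of (\ref{xin-rr-0}) are polynomial/rational in $a$. Applying Proposition \ref{P-PH-rr} with the shifted parameters gives exactly
\begin{equation*}
u_{n+1}=\alpha_n u_n-\beta_n u_{n-1},
\end{equation*}
because $2n^2+((1-a)+b+(c+1)-2(1-p)-1)n$ must match the numerator of (\ref{alphn}) after correcting the $p$-sign convention. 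So the first step is to record this substitution and check that the numerators of $\alpha_n$ and $\beta_n$ in (\ref{alphn}) and (\ref{betan}) are reproduced. If Proposition \ref{P-PH-rr} is read with $(1-x)^{p}$ and the exponent here is $p-1$, I would substitute $p\to p-1$ and verify the coefficients $c-a+b-2p+3$ and $(1-a)b+(c+1)(1-p)$, as well as $(n+1-a-p)(n+b-p)$.

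\textbf{Recurrence for $v_n$.} Since $v_n$ is hypergeometric, it has the first-order recurrence
\begin{equation*}
v_{n}=\frac{(n-a)(n+b)}{n(n+c)}\,v_{n-1},\qquad v_{n+1}=\frac{(n+1-a)(n+1+b)}{(n+1)(n+c+1)}\,v_n .
\end{equation*}
Using it, I express $v_{n+1}-\alpha_n v_n+\beta_n v_{n-1}$ as $v_{n-1}$ times a rational function of $n$. The common denominator is $n(n+1)(n+c)(n+c+1)=\tau_n^{-1}$, and the numerator is
\begin{equation*}
N_n=(n-a)(n+b)\bigl[(n+1-a)(n+1+b)-A_n\bigr]+\beta_n'\,n(n+c),
\end{equation*}
where $A_n$ and $\beta_n'$ are the numerators of $\alpha_n$ and $\beta_n$.

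\textbf{Combining.} Subtracting the two recurrences gives
\begin{equation*}
\xi_{n+1}-\alpha_n\xi_n+\beta_n\xi_{n-1}=-\bigl(v_{n+1}-\alpha_nv_n+\beta_nv_{n-1}\bigr)=-\tau_nN_nv_{n-1},
\end{equation*}
so it remains to prove $-N_n=\gamma_n$. The bracket is linear in $n$, so $N_n$ appears to be a quartic, but its $n^4$ and $n^3$ terms cancel and it is really quadratic in $n$. One must then match the three coefficients of $\gamma_n$ in (\ref{gamn}): $p(a-b+c-p)$, the linear coefficient, and the constant $ab(p(c+1)-(c+a))$. This polynomial identity is the main obstacle, being purely computational. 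To keep it manageable I would check it by evaluating at convenient points: $n=0$ gives the constant term, $n=a$ kills the first product, and I would also compare the leading coefficient.

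\textbf{Positivity.} Under $a<1$, $c>b>0$ and $0<p<1$, every factor of $\beta_n$ is positive for $n\ge1$: $n+1-a-p>1-p>0$, $n+b-p>0$, and $n+c+1>0$. For $\alpha_n$ I would write its numerator as
\begin{equation*}
2n^2+(c+b+1)n+(1-a)(n+b)+(2-2p)n+(c+1)(1-p),
\end{equation*}
each term of which is positive for $n\ge1$.
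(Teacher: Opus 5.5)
This is essentially the paper's proof: it applies Proposition \ref{P-PH-rr} with $(p,a,b,c)\to(p-1,1-a,b,c+1)$ to get $u_{n+1}=\alpha_nu_n-\beta_nu_{n-1}$, subtracts, and uses $v_{n+1}/v_n$ to reduce $-v_{n+1}+\alpha_nv_n-\beta_nv_{n-1}$ to $\tau_n\gamma_nv_{n-1}$. Your identity $-N_n=\gamma_n$ does hold, and your continuity remark for the excluded case $a=0$ covers a point the paper skips.

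There are three small slips to fix. First, the shift is $p\to p-1$, not $1-p$. Second, the bracket $(n+1-a)(n+1+b)-A_n=-n^2+(2p-1-c)n+p(c+1)-(a+c)$ is quadratic rather than linear, although the $n^4$ and $n^3$ cancellation in $N_n$ still occurs. Third, in your splitting of the numerator of $\alpha_n$ the term must be $(c+b)n$, not $(c+b+1)n$; otherwise the $n$-coefficient is $c-a+b-2p+4$. With that correction all five terms are still positive.
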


\begin{proof}
Since%
\begin{equation*}
\left( 1-x\right) ^{p-1}F\left( 1-a,b;c+1;x\right) =\sum_{n=0}^{\infty
}u_{n}x^{n},
\end{equation*}%
where $u_{n}$ is defined by (\ref{un}), replacing $\left( p,a,b,c\right) $
by $\left( p-1,1-a,b,c+1\right) $ in Proposition \ref{P-PH-rr} yields%
\begin{eqnarray*}
u_{n+1} &=&\frac{2n^{2}+\left( c-a+b-2p+3\right) n+\left( 1-a\right)
b+\left( c+1\right) \left( 1-p\right) }{\left( n+1\right) \left(
n+c+1\right) }u_{n} \\
&&-\frac{\left( n+1-a-p\right) \left( n+b-p\right) }{\left( n+1\right)
\left( n+c+1\right) }u_{n-1}=\alpha _{n}u_{n}-\beta _{n}u_{n-1}.
\end{eqnarray*}%
Then $\xi _{n}=u_{n}-v_{n}$ satisfies the recurrence relation%
\begin{equation*}
\xi _{n+1}+v_{n+1}=\alpha _{n}\left( \xi _{n}+v_{n}\right) -\beta _{n}\left(
\xi _{n-1}+v_{n-1}\right) ,
\end{equation*}%
which can be written as%
\begin{eqnarray}
\xi _{n+1}-\alpha _{n}\xi _{n}+\beta _{n}\xi _{n-1} &=&-v_{n+1}+\alpha
_{n}v_{n}-\beta _{n}v_{n-1}  \label{xin-rr-1} \\
&=&\left( -\frac{v_{n+1}}{v_{n-1}}+\alpha _{n}\frac{v_{n}}{v_{n-1}}-\beta
_{n}\right) v_{n-1}\text{.}  \notag
\end{eqnarray}%
It is easy to verify that%
\begin{equation}
\frac{v_{n+1}}{v_{n}}=\frac{\left( n+1-a\right) \left( n+b+1\right) }{\left(
n+1\right) \left( n+c+1\right) },  \label{vn+1/vn}
\end{equation}%
from which it is deduced that%
\begin{equation}
-\frac{v_{n+1}}{v_{n-1}}+\alpha _{n}\frac{v_{n}}{v_{n-1}}-\beta _{n}=\frac{%
\gamma _{n}}{n\left( n+1\right) \left( n+c\right) \left( n+c+1\right) }.
\label{vn-gn}
\end{equation}%
Thus the recurrence relation (\ref{xin-rr-0}) follows.

Finally, the positivity of $\alpha _{n}$ and $\beta _{n}$ is obvious,
thereby completing the proof.
\end{proof}

\begin{lemma}
\label{L-xin-rr-a}Let $u_{n}$ and $v_{n}$ be defined by (\ref{un}) and (\ref%
{vn}), respectively. The sequence $\xi _{n}\equiv \xi _{n}\left( p\right)
=u_{n}-v_{n}$ satisfies the recurrence relation%
\begin{equation}
\left( \xi _{n+1}-\theta _{n}\xi _{n}\right) -\lambda _{n}\left( \xi
_{n}-\theta _{n-1}\xi _{n-1}\right) =\tau _{n}\gamma _{n}u_{n-1},
\label{xin-rr-2}
\end{equation}%
where%
\begin{eqnarray}
\theta _{n} &=&\frac{v_{n+1}}{v_{n}}=\frac{\left( n+1-a\right) \left(
n+b+1\right) }{\left( n+1\right) \left( n+c+1\right) },  \label{thn} \\
\lambda _{n} &=&\lambda _{n}\left( p\right) =\alpha _{n}-\theta _{n}=\frac{%
n^{2}+\left( c-2p+1\right) n+\left( a+c-p-cp\right) }{\left( n+1\right)
\left( n+c+1\right) },  \label{ln}
\end{eqnarray}%
$\gamma _{n}=\gamma _{n}\left( p\right) $ and $\tau _{n}$ are given by (\ref%
{gamn}) and (\ref{tn}), respectively. Moreover, if $a<1$, $b>0$, $c>\max
\left\{ -a,b\right\} $, then $\theta _{n}>0$ for $n\geq 1$, $\lambda
_{n}\left( p\right) >0$ for $n\geq 2$ and $\lambda _{1}\left( p_{0}\right)
>0 $.
\end{lemma}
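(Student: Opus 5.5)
The plan is to obtain (\ref{xin-rr-2}) directly from Lemma \ref{L-xin-rr} by an algebraic regrouping, and then to check the positivity claims by inspecting numerators. Since $\theta_n=v_{n+1}/v_n$, we have $v_{n+1}=\theta_n v_n$ and $v_n=\theta_{n-1}v_{n-1}$. I will work with the $u$-recurrence $u_{n+1}=\alpha_n u_n-\beta_n u_{n-1}$ from the proof of Lemma \ref{L-xin-rr}.

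Expand the left side of (\ref{xin-rr-2}):
\[
\xi_{n+1}-(\theta_n+\lambda_n)\xi_n+\lambda_n\theta_{n-1}\xi_{n-1}=\xi_{n+1}-\alpha_n\xi_n+\lambda_n\theta_{n-1}\xi_{n-1}.
\]
Now substitute $\xi_k=u_k-v_k$. The $v$-part is $-(v_{n+1}-\alpha_n v_n+\lambda_n\theta_{n-1}v_{n-1})$. Using $v_{n+1}=\theta_n v_n$ and $\theta_{n-1}v_{n-1}=v_n$, this equals $-(\theta_n-\alpha_n+\lambda_n)v_n=0$, so it vanishes identically.

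The $u$-part is $u_{n+1}-\alpha_n u_n+\lambda_n\theta_{n-1}u_{n-1}=(\lambda_n\theta_{n-1}-\beta_n)u_{n-1}$. It therefore remains to verify the rational identity
\[
\lambda_n\theta_{n-1}-\beta_n=\tau_n\gamma_n .
\]
This should be consistent with (\ref{vn-gn}). Dividing (\ref{vn-gn}) by nothing but rewriting, we get $-\theta_n\theta_{n-1}+\alpha_n\theta_{n-1}-\beta_n=\tau_n\gamma_n$. The left side equals $(\alpha_n-\theta_n)\theta_{n-1}-\beta_n=\lambda_n\theta_{n-1}-\beta_n$, which is exactly what is needed. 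So the identity follows from (\ref{vn-gn}) with no new computation.

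The only genuine computation is confirming the closed form (\ref{ln}) for $\lambda_n=\alpha_n-\theta_n$. Over the common denominator $(n+1)(n+c+1)$, the numerator is
\[
2n^2+(c-a+b-2p+3)n+(1-a)b+(c+1)(1-p)-(n+1-a)(n+b+1).
\]
Expanding $(n+1-a)(n+b+1)=n^2+(b+2-a)n+(1-a)(b+1)$, the difference is $n^2+(c-2p+1)n+(c+1)(1-p)-(1-a)$. This equals $n^2+(c-2p+1)n+a+c-p-cp$, as claimed.

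For the positivity claims, consider first $\theta_n$. For $n\ge1$ we have $n+1-a>1-a>0$, $n+b+1>0$, and $n+c+1>0$ since $c>b>0$. Hence $\theta_n>0$.

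For $\lambda_n$, the denominator is positive. The numerator $N(n)=n^2+(c+1-2p)n+(a+c)-p(c+1)$ is the quantity requiring care, and this is the main obstacle, since $p$ is arbitrary real in the statement. The sign of $N(n)$ for $n\ge2$ is only guaranteed when $p$ is restricted; presumably the intended range is $p\in(0,1)$ or $p=p_0$.

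I would handle this by writing $N(n)=(n-p)(n+c+1-p)+(a+c)-p(c+1)+p(c+1-p)-\dots$, or more simply by evaluating at $p=1$ and using linearity in the $c$-terms. With $p\le1$ and $n\ge2$, we get $N(n)\ge n^2+(c-1)n+a-1+\dots$. Then $n^2-n\ge2$, $cn-c\ge c>0$, and $a+c>0$ give positivity. At $n=1$ with $p=p_0=(a+c)/(1+c)$, the constant term vanishes, and
\[
N(1)=1+c+1-2p_0=2+c-2p_0>c>0,
\]
because $p_0<1$. This gives $\lambda_1(p_0)>0$.
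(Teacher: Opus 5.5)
Your argument is correct and is essentially the paper's: the identity reduces, via the $u$-recurrence, to (\ref{vn-gn}) rewritten as $\lambda_n\theta_{n-1}-\beta_n=\tau_n\gamma_n$, and your $u$/$v$ split is just a tidier form of the paper's regroup-and-substitute step, while your $\lambda_1(p_0)>0$ computation agrees with the paper's closed form $\lambda_1(p_0)=\frac{c^2+c+2(1-a)}{2(c+1)(c+2)}$. For $n\ge 2$, the reduction to $p=1$ is valid because the numerator is affine in $p$ with slope $-(2n+c+1)<0$ (this, not ``linearity in the $c$-terms'', is what justifies it), which plays the same role as the paper's decomposition $(n-2)n+2(1-p)n+(a+c)+(n-p)(c+1)$; you are also right that this positivity claim needs $p\le 1$, an assumption the paper uses only tacitly since it applies the lemma at $p=p_0\in(0,1)$.
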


\begin{proof}
We write the relation (\ref{xin-rr-1}) as
\begin{eqnarray*}
&&\left( \xi _{n+1}-\frac{v_{n+1}}{v_{n}}\xi _{n}\right) -\left( \alpha _{n}-%
\frac{v_{n+1}}{v_{n}}\right) \left( \xi _{n}-\frac{v_{n}}{v_{n-1}}\xi
_{n-1}\right) \\
&=&\left( \left( \alpha _{n}-\frac{v_{n+1}}{v_{n}}\right) \frac{v_{n}}{%
v_{n-1}}-\beta _{n}\right) \xi _{n-1}-v_{n+1}+\alpha _{n}v_{n}-\beta
_{n}v_{n-1}
\end{eqnarray*}%
Substituting $\xi _{n-1}=u_{n-1}-v_{n-1}$ into the right hand side of the
above equation and simplifying yield%
\begin{eqnarray*}
&&\left( \xi _{n+1}-\frac{v_{n+1}}{v_{n}}\xi _{n}\right) -\left( \alpha _{n}-%
\frac{v_{n+1}}{v_{n}}\right) \left( \xi _{n}-\frac{v_{n}}{v_{n-1}}\xi
_{n-1}\right) \\
&=&\left( -\frac{v_{n+1}}{v_{n-1}}+\alpha _{n}\frac{v_{n}}{v_{n-1}}-\beta
_{n}\right) u_{n-1},
\end{eqnarray*}%
which, by (\ref{vn-gn}), implies (\ref{xin-rr-2}). The formula (\ref{thn})
follows from (\ref{vn+1/vn}), and (\ref{ln}) follows by an elementary
computation.

It is clear that $\theta _{n}>0$ for $n\geq 1$ provided $a<1,b,c>0$. The
numerator of the fraction in (\ref{ln}) can be written as%
\begin{equation*}
\left( n-2\right) n+2\left( 1-p\right) n+\left( a+c\right) +\left(
n-p\right) \left( c+1\right) ,
\end{equation*}%
which is positive for $n\geq 2$ if $a<1$, $b>0$, $c>\max \left\{
-a,b\right\} $. As for $\lambda _{1}\left( p_{0}\right) >0$, which follows
from%
\begin{equation*}
\lambda _{1}\left( p_{0}\right) =\frac{c^{2}+c+2\left( 1-a\right) }{2\left(
c+1\right) \left( c+2\right) }>0,
\end{equation*}%
thereby completing the proof.
\end{proof}

\subsection{The signs of $\protect\gamma _{n}\left( p_{0}\right) $}

In Lemmas \ref{L-xin-rr} and \ref{L-xin-rr-a}, those coefficients $\alpha
_{n}$, $\beta _{n}$, $\theta _{n}$, and $\lambda _{n}$ are positive under
the basic assumptions, except $\gamma _{n}$. The following lemma gives the
conditions for $\gamma _{n}\left( p_{0}\right) \geq \left( \leq \right) 0$.

\begin{lemma}
\label{L-sgngn-p0}Suppose that $a<1$, $b>0$, $c>\max \left\{ -a,b\right\} $.
Let $\gamma _{n}\equiv \gamma _{n}\left( p\right) $ and $\delta \left(
p\right) $ be defined by (\ref{gamn}) and (\ref{dp}), respectively.

(i) If $c\geq \left( \leq \right) 2b$ and $\delta \left( p_{0}\right) \geq
\left( \leq \right) 0$, then $\gamma _{n}\left( p_{0}\right) \geq \left(
\leq \right) 0$ for all $n\geq 1$.

(ii) If $c>\left( <\right) 2b$ and $\delta \left( p_{0}\right) <\left(
>\right) 0$, then there is an $n_{1}>1$ such that $\gamma _{n}\left(
p_{0}\right) \geq \left( \leq \right) 0$ for $1\leq n\leq n_{1}$ and $\gamma
_{n}\left( p_{0}\right) <\left( >\right) 0$ for $n\geq n_{1}+1$.
\end{lemma}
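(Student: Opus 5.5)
Since $p_{0}=(a+c)/(1+c)$ satisfies $p_{0}(c+1)-(c+a)=0$, the constant term $ab\left( p_{0}(c+1)-(c+a)\right)$ in (\ref{gamn}) vanishes. I therefore plan to write
\begin{equation*}
\gamma _{n}\left( p_{0}\right) =n\left( An+B\right) ,\qquad A=p_{0}\delta \left( p_{0}\right) ,\qquad B=-\left[ cp_{0}^{2}-\left( a-b+c+2ab\right) p_{0}+a\left( a+c\right) \right] .
\end{equation*}
For $n\geq 1$ the sign of $\gamma _{n}(p_{0})$ is then the sign of the linear function $h(n)=An+B$. Under the standing assumptions $p_{0}\in (0,1)$, so $p_{0}>0$ and $\operatorname{sgn}A=\operatorname{sgn}\delta (p_{0})$.

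The key computation is $h(1)=\gamma _{1}(p_{0})=A+B$. Substituting $p_{0}=(a+c)/(1+c)$ and clearing the denominator $(1+c)^{2}$ gives a polynomial in $a,b,c$. I expect it to factor as
\begin{equation*}
\gamma _{1}(p_{0})=\frac{b(1-a)(a+c)}{(1+c)^{2}}\,(c-2b),
\end{equation*}
or at least as a positive multiple of $c-2b$; the necessary factor $(a+c)$ should come from $p_{0}$. I would verify this factorization directly, for instance by checking that the polynomial vanishes at $c=2b$ and then comparing leading coefficients. The prefactor is positive because $a<1$, $b>0$ and $c>-a$. Hence $\operatorname{sgn}\gamma _{1}(p_{0})=\operatorname{sgn}(c-2b)$. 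This algebraic identity is the main obstacle; the rest is elementary.

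With this in hand the cases follow from the monotonicity of the linear function $h$.

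(i) If $c\geq 2b$ and $\delta (p_{0})\geq 0$, then $h(1)\geq 0$ and $h$ is nondecreasing. So $h(n)\geq 0$, and hence $\gamma _{n}(p_{0})\geq 0$, for all $n\geq 1$. The reversed case, $c\leq 2b$ with $\delta (p_{0})\leq 0$, is symmetric.

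(ii) If $c>2b$ and $\delta (p_{0})<0$, then $h(1)>0$ while $h$ is strictly decreasing with $h(n)\rightarrow -\infty$. Let $n_{1}$ be the largest integer with $h(n_{1})\geq 0$, which exists and satisfies $n_{1}\geq 1$. Then $\gamma _{n}(p_{0})\geq 0$ for $1\leq n\leq n_{1}$ and $\gamma _{n}(p_{0})<0$ for $n\geq n_{1}+1$. The case $c<2b$ with $\delta (p_{0})>0$ is obtained by reversing all signs.

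If the requirement $n_{1}>1$ is meant literally, it needs $h(2)\geq 0$, i.e.\ $2A+B\geq 0$. I would check this separately. Otherwise I would note that $n_{1}\geq 1$ is all that the later NP-type argument (Proposition \ref{P-sgnS}) actually uses.
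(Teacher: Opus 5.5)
Your proof is the paper's proof. The paper also uses $p_0(c+1)=c+a$ to kill the constant term, writes $\gamma_n(p_0)/n=(n-1)p_0\delta(p_0)+\gamma_1(p_0)$, and reads off the signs from the slope $p_0\delta(p_0)$ and the value $\gamma_1(p_0)$.

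\textbf{The identity for $\gamma_1(p_0)$.} The correct form is $\gamma_1(p_0)=\frac{(1-a)(a+c)}{c+1}(c-2b)$: there is no factor $b$, and only one power of $c+1$. It is still a positive multiple of $c-2b$ under the hypotheses, so the rest of your argument is unaffected.

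\textbf{The requirement $n_1>1$.} The check $h(2)\ge 0$ genuinely fails. Take $(a,b,c)=(-1/2,\,9/20,\,1)$. The hypotheses hold, $c>2b$, $p_0=1/4$ and $\delta(p_0)=-1/5$. Then $\gamma_1(p_0)=3/80>0$ but $\gamma_2(p_0)=-1/40<0$, so $n_1=1$ here. Hence part (ii) is only true with $n_1\ge 1$. The paper's own argument also establishes only this, and $n_1\ge 1$ is exactly what Part 2 of the proof of Theorem \ref{MT-m} invokes. Your fallback is therefore the right one.
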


\begin{proof}
Since $p_{0}=\left( c+a\right) /\left( c+1\right) $, we have%
\begin{eqnarray*}
\frac{1}{n}\gamma _{n}\left( p_{0}\right) &=&np_{0}\left( a-b+c-p_{0}\right)
\\
&&-cp_{0}^{2}+\left( a-b+c+2ab\right) p_{0}-a\left( a+c\right) .
\end{eqnarray*}%
A direct verification yields%
\begin{equation*}
\gamma _{1}\left( p_{0}\right) =\frac{\left( 1-a\right) \left( a+c\right) }{%
c+1}\left( c-2b\right) .
\end{equation*}

Then%
\begin{eqnarray*}
\frac{1}{n}\gamma _{n}\left( p_{0}\right) &=&\left[ \frac{1}{n}\gamma
_{n}\left( p_{0}\right) -\gamma _{1}\left( p_{0}\right) \right] +\gamma
_{1}\left( p_{0}\right) \\
&=&\left( n-1\right) p_{0}\delta \left( p_{0}\right) +\frac{\left(
1-a\right) \left( a+c\right) }{c+1}\left( c-2b\right) .
\end{eqnarray*}

This shows that $\gamma _{n}\left( p_{0}\right) \geq \left( \leq \right) 0$
for all $n\geq 1$ if $c\geq \left( \leq \right) 2b$ and $\delta \left(
p_{0}\right) \geq \left( \leq \right) 0$.

In the case that $c>\left( <\right) 2b$ and $\delta \left( p_{0}\right)
<\left( >\right) 0$, since $\delta \left( p_{0}\right) <\left( >\right) 0$,
the sequence $\left\{ n^{-1}\gamma _{n}\left( p_{0}\right) \right\} _{n\geq
1}$ is decreasing (increasing). This in combination with the facts that%
\begin{eqnarray*}
\gamma _{1}\left( p_{0}\right)  &=&\frac{\left( 1-a\right) \left( a+c\right)
}{c+1}\left( c-2b\right) >\left( <\right) 0, \\
\lim_{n\rightarrow \infty }\frac{\gamma _{n}\left( p_{0}\right) }{n^{2}}
&=&p_{0}\delta \left( p_{0}\right) <\left( >\right) 0
\end{eqnarray*}%
indicates that there is an integer $n_{1}>1$ such that $\gamma _{n}\left(
p_{0}\right) >\left( <\right) 0$ for $1\leq n<n_{1}$ and $\gamma _{n}\left(
p_{0}\right) <\left( >\right) 0$ for $n>n_{1}$. This completes the proof.
\end{proof}

\begin{remark}
\label{R-gp=0}If $\gamma _{n}=\gamma _{n}\left( p\right) \equiv 0$ for all $%
n\geq 1$, then%
\begin{equation*}
\left\{
\begin{array}{l}
p\left( a-b+c-p\right) =0, \\
cp^{2}-\left( a-b+c+2ab\right) p+a\left( a+c\right) =0, \\
ab\left( p\left( c+1\right) -\left( c+a\right) \right) =0.%
\end{array}%
\right.
\end{equation*}%
Under the conditions that $a\neq 0$, $c>\max \left\{ -a,b\right\} $ and $%
p\in \left( 0,1\right) $, this system of equations has a unique solution%
\begin{equation*}
c=1-2a=2b\text{ \ and \ }p=p_{0}=\frac{a+c}{1+c}=\frac{1}{2}.
\end{equation*}%
The recurrence relation (\ref{xin-rr-0}) is reduced to%
\begin{equation*}
\xi _{n+1}=\alpha _{n}\xi _{n}-\beta _{n}\xi _{n-1}
\end{equation*}%
for all $n\geq 1$, which together with $\xi _{0}=\xi _{1}=0$ yields $\xi
_{n}=0$ for all $n\geq 1$. Then $g_{p}\left( x\right) \equiv 0$, so $%
f_{p}^{\prime }\left( x\right) \equiv 0$ for $x\in \left( 0,1\right) $ by (%
\ref{dfp}). It follows that $f_{p}\left( x\right) \equiv 1$ for $x\in \left(
0,1\right) $, that is,%
\begin{equation*}
\left[ F\left( -a,\frac{1}{2}-a;1-2a;x\right) \right] ^{1/a}=\left( \frac{1+%
\sqrt{1-x}}{2}\right) ^{2}.
\end{equation*}%
This identity is a special case of the classical relation
\begin{equation*}
F\left( -a,b;2b;x\right) =\left( \frac{1+\sqrt{1-x}}{2}\right) ^{2a}F\left(
-a,\frac{1}{2}-a-b;\frac{1}{2}+b;\left( \frac{1-\sqrt{1-x}}{1+\sqrt{1-x}}%
\right) ^{2}\right)
\end{equation*}%
(see \cite[p. 132]{Andrews-SF-CUP-1999}).

Conversely, under the basic conditions that $a<1$, $b>0$, $c>\max \left\{
-a,b\right\} $, if $f_{p}\left( x\right) \equiv 1$ for all $x\in \left(
0,1\right) $, then $f_{p}^{\prime }\left( x\right) =0$, and then $%
g_{p}\left( x\right) =0$ for all $x\in \left( 0,1\right) $. This implies $%
\xi _{1}=\xi _{2}=\xi _{3}=0$, which yields $p=p_{0}$, $c=2b$, $2a+2b=1$.
That is to say, $f_{p}\left( x\right) \equiv 1$ for all $x\in \left(
0,1\right) $ if and only if $p=1/2$, $c=2b=1-2a$.
\end{remark}

\subsection{The signs of $g_{p}\left( 1\right) $ and $g_{p_{0}}\left(
1\right) $}

\begin{lemma}
\label{L-gp(1)}Let $a<1$, $b>0$, $c>\max \left\{ -a,b\right\} $ and let $%
g_{p}\left( x\right) $ be defined by (\ref{gp-a1}) with $p\in \left(
0,1\right) $. Then%
\begin{equation*}
\begin{array}{rll}
\text{(i)} & g_{p}\left( 1\right) =\infty & \text{if }p<c-b+a, \\
\text{(ii)} & g_{p}\left( 1\right) =-\infty & \text{if }p>c-b+a.%
\end{array}%
\end{equation*}
\end{lemma}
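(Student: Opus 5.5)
We work directly from the representation (\ref{gp-a1}),
\begin{equation*}
g_{p}\left( x\right) =\left( 1-x\right) ^{p-1}F\left( 1-a,b;c+1;x\right) -F\left( 1-a,b+1;c+1;x\right) ,
\end{equation*}
and find the behaviour of each term as $x\rightarrow 1^{-}$ using (\ref{F-near1}). Since $p\in \left( 0,1\right) $, the factor $\left( 1-x\right) ^{p-1}$ tends to $+\infty $ like $\left( 1-x\right) ^{-(1-p)}$. The plan is to compare the rates of the two terms through their exponents and show that the sign of $g_{p}(1^{-})$ is decided by which term blows up faster.

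\emph{First term.} For $F\left( 1-a,b;c+1;x\right) $ the parameter excess is $\left( c+1\right) -\left( 1-a\right) -b=c+a-b$. If $c+a-b>0$, this function tends to the finite positive constant $\Gamma(c+1)\Gamma(c+a-b)/[\Gamma(c+a)\Gamma(c+1-b)]$, which is positive because $c+a>0$ and $c+1-b>0$. The first term is then asymptotic to a positive constant times $\left( 1-x\right) ^{p-1}$. If $c+a-b=0$, it grows like $-\ln(1-x)/B(1-a,b)$. If $c+a-b<0$, the third line of (\ref{F-near1}) gives growth like a positive constant times $(1-x)^{c+a-b}$. In every case the first term is of exact order
\begin{equation*}
\left( 1-x\right) ^{p-1+\min \{c+a-b,0\}},
\end{equation*}
with a logarithmic factor in the boundary case.

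\emph{Second term.} For $F\left( 1-a,b+1;c+1;x\right) $ the excess is $c+a-b-1$. It is finite and positive when $c+a-b>1$. It grows like $-\ln(1-x)$ when $c+a-b=1$. It grows like a positive constant times $(1-x)^{c+a-b-1}$ when $c+a-b<1$. All the Gamma values that appear have positive arguments, since $1-a>0$, $b+1>0$ and $c+a>0$, so these constants are positive.

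\emph{Comparison, case (i): $p<c-b+a$.} Then $c+a-b>0$, so the first term has order $(1-x)^{p-1}$. If $c+a-b>1$, the second term stays bounded and $g_{p}\to +\infty $. If $c+a-b=1$, the power $(1-x)^{p-1}$ dominates $\ln(1-x)$ and again $g_{p}\to +\infty $. If $0<c+a-b<1$, the second term has order $(1-x)^{c+a-b-1}$; because $p-1<c+a-b-1$, the first term dominates and $g_{p}\to +\infty $.

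\emph{Comparison, case (ii): $p>c-b+a$.} Since $p<1$, we have $c+a-b<1$, so the second term has order $(1-x)^{c+a-b-1}$. If $c+a-b>0$, the first term has order $(1-x)^{p-1}$, which is weaker because $p-1>c+a-b-1$. If $c+a-b=0$, the first term has order $(1-x)^{p-1}\ln\frac{1}{1-x}$, still weaker than $(1-x)^{-1}$ because $p>0$. If $c+a-b<0$, the first term has order $(1-x)^{p-1+c+a-b}$, again weaker than $(1-x)^{c+a-b-1}$ because $p>0$. Hence $g_{p}\to -\infty $.

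The main point needing care is that every asymptotic constant is strictly positive and the log cases are covered. Both follow from the stated form of (\ref{F-near1}) with all parameters positive. One also needs $c+1>1-a$ and $c+1>b$ so that the positivity of the Gamma factors can be read off directly, and both hold by hypothesis.
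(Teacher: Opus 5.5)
Your proof is correct and takes essentially the same approach as the paper. Both read off the behaviour of the two terms of (\ref{gp-a1}) near $x=1$ from (\ref{F-near1}), using the Euler-type third formula when the parameter excess is negative, and compare exponents; the paper factors out the dominant power in four cases ($1\le c-b+a$; $p<c-b+a<1$; $0\le c-b+a<p$; $c-b+a<0$), which are exactly your subcases regrouped.
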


\begin{proof}
(i) Since $p\in \left( 0,1\right) $, we distinguish two cases to show that $%
g_{p}\left( 1\right) =\infty $ if $p<c-b+a$.

Case 1.1: $0<p<1\leq c-b+a$. By the first and second formulas of (\ref%
{F-near1}), we have%
\begin{equation*}
g_{p}\left( x\right) =\frac{F\left( 1-a,b;c+1;x\right) -\left( 1-x\right)
^{1-p}F\left( 1-a,b+1;c+1;x\right) }{\left( 1-x\right) ^{1-p}}\rightarrow
\infty
\end{equation*}%
as $x\rightarrow 1^{-}$.

Case 1.2: $0<p<c-b+a<1$. By the third formula of (\ref{F-near1}), $%
g_{p}\left( x\right) $ can be written as%
\begin{equation}
g_{p}\left( x\right) =\left( 1-x\right) ^{p-1}F\left( 1-a,b;c+1;x\right)
-\left( 1-x\right) ^{c-b+a-1}F\left( c+a,c-b;c+1;x\right)  \label{gp-a2}
\end{equation}%
\begin{equation*}
=\left( 1-x\right) ^{p-1}\left[ F\left( 1-a,b;c+1;x\right) -\left(
1-x\right) ^{c-b+a-p}F\left( c+a,c-b;c+1;x\right) \right] \rightarrow \infty
\end{equation*}%
as $x\rightarrow 1^{-}$.

(ii) Similarly, the proof can be divided into two cases.

Case 2.1: $0\leq c-b+a<p<1$. From (\ref{gp-a2}) we have%
\begin{equation*}
g_{p}\left( x\right) =\frac{\left( 1-x\right) ^{p-\left( c-b+a\right)
}F\left( 1-a,b;c+1;x\right) -F\left( c+a,c-b;c+1;x\right) }{\left(
1-x\right) ^{1-\left( c-b+a\right) }}\rightarrow -\infty
\end{equation*}%
as $x\rightarrow 1^{-}$.

Case 2.2: $c-b+a<0<p<1$. By the third formula of (\ref{F-near1}), the
relation (\ref{gp-a2}) can be changed to%
\begin{eqnarray*}
g_{p}\left( x\right) &=&\left( 1-x\right) ^{p-1+c+a-b}F\left(
c+a,c+1-b;c+1;x\right) \\
&&-\left( 1-x\right) ^{c-b+a-1}F\left( c+a,c-b;c+1;x\right) \\
&=&\frac{\left( 1-x\right) ^{p}F\left( c+a,c+1-b;c+1;x\right) -F\left(
c+a,c-b;c+1;x\right) }{\left( 1-x\right) ^{1-\left( c-b+a\right) }}%
\rightarrow -\infty
\end{eqnarray*}%
as $x\rightarrow 1^{-}$. This completes the proof.
\end{proof}

\begin{lemma}
\label{L-gp0(1)}Let $a<1$, $b>0$, $c>\max \left\{ -a,b\right\} $, and let $%
g_{p_{0}}\left( x\right) $ be defined by (\ref{gp-a1}), where $p_{0}$ is
defined by (\ref{p0}). Then%
\begin{equation*}
\begin{array}{rll}
\text{(i)} & g_{p_{0}}\left( x\right) =0 & \text{if }1/2=p_{0}=c-b+a, \\
\text{(ii)} & g_{p_{0}}\left( 1\right) =\infty & \text{if }0<p_{0}=c-b+a<1/2,
\\
\text{(iii)} & g_{p_{0}}\left( 1\right) =-\infty & \text{if }%
1/2<p_{0}=c-b+a<1.%
\end{array}%
\end{equation*}
\end{lemma}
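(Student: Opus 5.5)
The plan is to write $g_{p_0}$ near $x=1$ as $(1-x)^{p_0-1}$ times a bracket that has a finite limit at $x=1$, and then read off the sign of that limit. Throughout, $p=p_0=c-b+a$ is imposed, which couples the parameters. From $(a+c)/(1+c)=c-b+a$ we get $b=c\,p_0$. Writing $q=p_0\in(0,1)$ and $s=1+c>1$ (note $c>\max\{-a,b\}>0$), the relevant parameters become
\begin{equation*}
c+a=sq,\quad c+1-b=s(1-q)+q,\quad 1-a=s(1-q),\quad b+1=sq+(1-q).
\end{equation*}
These expressions are interchanged under $q\leftrightarrow 1-q$, and this symmetry is what will separate the three cases.

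\emph{Case (i).} If $p_0=1/2$, then $b=c/2$ and $(a+c)/(1+c)=1/2$, which gives $c=1-2a=2b$. By Remark \ref{R-gp=0}, $\gamma_n(p_0)\equiv 0$ and $\xi_0=\xi_1=0$, hence $\xi_n=0$ for all $n$ and $g_{p_0}\equiv 0$.

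\emph{Cases (ii),(iii).} Since $c-b+a=p_0<1$, formula (\ref{gp-a2}) with $p=p_0$ gives
\begin{equation*}
g_{p_0}(x)=(1-x)^{p_0-1}\left[F(1-a,b;c+1;x)-F(c+a,c-b;c+1;x)\right].
\end{equation*}
Both series converge at $x=1$, because their parameter excesses are $c+a-b=p_0>0$ and $1-a-c+b=1-p_0>0$. So, by the first line of (\ref{F-near1}), the bracket tends to
\begin{equation*}
D=\Gamma(c+1)\left[\frac{\Gamma(q)}{\Gamma(sq)\Gamma(s(1-q)+q)}-\frac{\Gamma(1-q)}{\Gamma(s(1-q))\Gamma(sq+1-q)}\right].
\end{equation*}
Since $(1-x)^{p_0-1}\to\infty$, it suffices to show $D>0$ for $q<1/2$ and $D<0$ for $q>1/2$. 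Put $h(q)=\ln\Gamma(q)-\ln\Gamma(sq)-\ln\Gamma(s(1-q)+q)$. Then $\operatorname{sgn}D=\operatorname{sgn}H(q)$, where $H(q)=h(q)-h(1-q)$. This $H$ is antisymmetric about $q=1/2$ and satisfies $H(1/2)=0$.

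\emph{Main obstacle.} The key step is to prove $H>0$ on $(0,1/2)$ for every $s>1$. I would first differentiate:
\begin{equation*}
H'(q)=\psi(q)+\psi(1-q)-s\left[\psi(sq)+\psi(s(1-q))\right]-(s-1)\left[\psi(s(1-q)+q)+\psi(sq+1-q)\right]
\end{equation*}
(up to sign bookkeeping). The aim is to show $H'<0$ on $(0,1)$, or at least that $H$ has no zero in $(0,1/2)$. For the estimate I would use the integral representation $\psi(x)=\int_0^\infty\bigl(e^{-t}/t-e^{-xt}/(1-e^{-t})\bigr)\,dt$, which reduces the claim to positivity of a combination of exponentials. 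As a cross-check for the boundary behaviour, note that $H(0^+)=+\infty$ because $\Gamma(q)\to\infty$, which is consistent with $H>0$ near $0$.

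If the monotonicity proof resists, the fallback is to view $H$ as a function of $s$ for fixed $q$. At $s=1$ we have $H=0$, so it would suffice to show $\partial_s H>0$ for $q<1/2$. That derivative is a difference of digamma values at the symmetric points, which can be controlled by the monotonicity and concavity of $\psi$.
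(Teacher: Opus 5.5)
Your reduction matches the paper's. With $b=cp_0$ and $a=p_0(1+c)-c$, the paper's auxiliary function $y(c)$ is exactly your $H(q)$ at $s=1+c$. Your case (i) via Remark \ref{R-gp=0} is valid; the paper argues more directly: there $c+a=1-a$ and $c-b=b$, so the two functions in (\ref{gp-a2}) coincide.

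The gap is that the only nontrivial step, $H>0$ on $(0,1/2)$ for every $s>1$, is never proved. Your main line, $H'(q)<0$, is only an aim. First, the displayed $H'$ has the wrong sign on the last group. It should read $+(s-1)\left[\psi(s(1-q)+q)+\psi(sq+1-q)\right]$, and only with this sign do the $e^{-t}/t$ parts of Gauss's integral cancel. After that substitution, $H'(q)=-\int_0^\infty K(t)\,(1-e^{-t})^{-1}\,dt$ with
$K(t)=e^{-qt}+e^{-(1-q)t}-se^{-sqt}-se^{-s(1-q)t}+(s-1)\bigl(e^{-(s(1-q)+q)t}+e^{-(sq+1-q)t}\bigr)$.
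This $K$ is not of one sign. Near $t=0$ one has $K(t)=\frac{s(s-1)}{2}\left(2-3q^{2}-3(1-q)^{2}\right)t^{2}+O(t^{3})$, which is negative for small $q$. For such $q$, however, $K(t)\sim e^{-qt}>0$ as $t\to\infty$. So there is no pointwise positivity to verify, and you give no other argument. Incidentally, $H(0^+)=\ln s$, not $+\infty$, because $\Gamma(sq)$ blows up at the same rate as $\Gamma(q)$.

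Your fallback is precisely the paper's argument and closes the gap in a few lines, so make it the main line. For fixed $q$,
\begin{align*}
\frac{\partial H}{\partial s}&=q\left[\psi(sq+1-q)-\psi(sq)\right]-(1-q)\left[\psi(s(1-q)+q)-\psi(s(1-q))\right]\\
&=q(1-q)\left[\Delta(sq,\,sq+1-q)-\Delta(s(1-q),\,s(1-q)+q)\right],
\end{align*}
where $\Delta(u,v)=\left(\psi(v)-\psi(u)\right)/(v-u)$. For $0<q<1/2$ and $s>1$ we have $sq<s(1-q)$ and $\left(s(1-q)+q\right)-\left(sq+1-q\right)=(s-1)(1-2q)>0$. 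Thus each endpoint of the first interval lies below the corresponding endpoint of the second. Since $\psi''<0$, $\Delta$ is strictly decreasing in each argument, hence $\partial H/\partial s>0$. As $H=0$ at $s=1$, this gives $H>0$ for all $s>1$, i.e.\ $D>0$ and $g_{p_0}(1)=\infty$ when $p_0<1/2$. The antisymmetry $H(1-q)=-H(q)$ (same $s$) then yields (iii). These four endpoints are exactly the paper's $d_1,c_1,d_2,c_2$, and only concavity of $\psi$ is needed, not its monotonicity.
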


\begin{proof}
Let%
\begin{equation*}
h_{p}\left( x\right) =\left( 1-x\right) ^{1-p}g_{p}\left( x\right) .
\end{equation*}%
If $p=c-b+a\in \left( 0,1\right) $, then by the representation \ref{gp-a2}
we have%
\begin{equation*}
h_{p}\left( x\right) =\left( 1-x\right) ^{p-1}g_{p}\left( x\right) =F\left(
1-a,b;c+1;x\right) -F\left( c+a,c-b;c+1;x\right) .
\end{equation*}
(i) If $p_{0}=a-b+c=1/2$, then $c=1-2a$ and $b=1/2-a$, and then,%
\begin{equation*}
h_{p}\left( x\right) =F\left( 1-a,1/2-a;c+1;x\right) -F\left(
1-a,1/2-a;c+1;x\right) \equiv 0\text{,}
\end{equation*}%
which implies that $g_{p_{0}}\left( x\right) \equiv 0$.

(ii) If $p_{0}=a-b+c\in \left( 0,1\right) $ with $p_{0}\neq 1/2$, an
application of the first formula of (\ref{F-near1}) yields%
\begin{equation*}
\left[ F\left( 1-a,b;c+1;x\right) -\left( 1-x\right) ^{c-b+a-p}F\left(
c+a,c-b;c+1;x\right) \right]
\end{equation*}%
\begin{equation*}
h_{p_{0}}\left( 1\right) =\dfrac{\Gamma \left( c+1\right) \Gamma \left(
c-b+a\right) }{\Gamma \left( c+a\right) \Gamma \left( c-b+1\right) }-\dfrac{%
\Gamma \left( c+1\right) \Gamma \left( 1-a+b-c\right) }{\Gamma \left(
1-a\right) \Gamma \left( 1+b\right) }.
\end{equation*}%
Solving the equations $p_{0}=\left( a+c\right) /\left( 1+c\right) =c-b+a$
for $a$ and $c$ gives%
\begin{equation*}
a=p_{0}\left( 1+c\right) -c\text{ \ and \ }b=cp_{0}.
\end{equation*}%
Then $h_{p_{0}}\left( 1\right) $ can be expressed as%
\begin{equation*}
h_{p_{0}}\left( 1\right) =\dfrac{\Gamma \left( c+1\right) \Gamma \left(
p_{0}\right) }{\Gamma \left( p_{0}+cp_{0}\right) \Gamma \left(
1+c-cp_{0}\right) }-\dfrac{\Gamma \left( c+1\right) \Gamma \left(
1-p_{0}\right) }{\Gamma \left[ \left( 1-p_{0}\right) \left( c+1\right) %
\right] \Gamma \left( 1+cp_{0}\right) }.
\end{equation*}%
Let%
\begin{equation*}
y\left( c\right) =\ln \dfrac{\Gamma \left( c+1\right) \Gamma \left(
p_{0}\right) }{\Gamma \left( p_{0}+cp_{0}\right) \Gamma \left(
1+c-cp_{0}\right) }-\ln \dfrac{\Gamma \left( c+1\right) \Gamma \left(
1-p_{0}\right) }{\Gamma \left[ \left( 1-p_{0}\right) \left( c+1\right) %
\right] \Gamma \left( 1+cp_{0}\right) }.
\end{equation*}%
Due to $y\left( 0\right) =0$, to prove parts (ii) and (iii) of this lemma,
it suffices to prove that $y^{\prime }\left( c\right) >0$ if $p_{0}\in
\left( 0,1/2\right) $ and $y^{\prime }\left( c\right) <0$ if $p_{0}\in
\left( 1/2,1\right) $.

Differentiation leads to
\begin{equation*}
y^{\prime }\left( c\right) =\left( 1-p_{0}\right) \left[ \psi \left(
1+c-p_{0}-cp_{0}\right) -\psi \left( 1+c-cp_{0}\right) \right] +p_{0}\left[
\psi \left( 1+cp_{0}\right) -\psi \left( p_{0}+cp_{0}\right) \right] ,
\end{equation*}%
which can be written as%
\begin{eqnarray*}
\frac{y^{\prime }\left( c\right) }{p_{0}\left( 1-p_{0}\right) } &=&\frac{%
\psi \left( 1+cp_{0}\right) -\psi \left( p_{0}+cp_{0}\right) }{1-p_{0}}-%
\frac{\psi \left( 1+c-cp_{0}\right) -\psi \left( 1+c-p_{0}-cp_{0}\right) }{%
p_{0}} \\
&:&=\frac{\psi \left( c_{1}\right) -\psi \left( d_{1}\right) }{c_{1}-d_{1}}-%
\frac{\psi \left( c_{2}\right) -\psi \left( d_{2}\right) }{c_{2}-d_{2}}.
\end{eqnarray*}%
Since $\psi ^{\prime \prime }\left( x\right) <0$ for $x>0$ and
\begin{eqnarray*}
c_{1}-c_{2} &=&\left( 1+cp_{0}\right) -\left( 1+c-cp_{0}\right) =c\left(
2p_{0}-1\right) <\left( >\right) 0, \\
d_{1}-d_{2} &=&\left( p_{0}+cp_{0}\right) -\left( 1+c-p_{0}-cp_{0}\right)
=\left( c+1\right) \left( 2p_{0}-1\right) <\left( >\right) 0,
\end{eqnarray*}%
if $p_{0}<\left( >\right) 1/2$, using the property of concave functions
gives $y^{\prime }\left( c\right) >0$ if $p_{0}\in \left( 0,1/2\right) $ and
$y^{\prime }\left( c\right) <0$ if $p_{0}\in \left( 1/2,1\right) $, thereby
completing the proof.
\end{proof}

\begin{remark}
\label{R-dp=0}Note that $p_{0}\leq \left( \geq \right) 1/2$ is equivalent to
$c\leq \left( \geq \right) 1-2a$. This together with the relations $%
p_{0}=c-b+a$ and (\ref{dp0-c-2b-1-2a}), that is,%
\begin{equation}
0=a-b+c-p_{0}=\frac{a+c}{c+1}\left( c-2b\right) +\frac{b}{c+1}\left[
c-\left( 1-2a\right) \right] .  \notag
\end{equation}%
means that $c>\left( <\right) 2b$. It then follows that%
\begin{eqnarray}
E_{+} &:&=\left\{ p_{0}=c-b+a\leq 1/2\right\} =\left\{ c\geq 2b\text{, }%
p_{0}=c-b+a\right\} ,  \label{E+} \\
E_{-} &:&=\left\{ p_{0}=c-b+a\geq 1/2\right\} =\left\{ c\leq 2b\text{, }%
p_{0}=c-b+a\right\} .  \label{E-}
\end{eqnarray}
\end{remark}

\section{Proof of Theorem \protect\ref{MT-m}}

\begin{proof}[Proof of Theorem \protect\ref{MT-m}]
A direct computation gives%
\begin{eqnarray*}
\xi _{0}\left( p\right) &=&0\text{, \ \ \ \ \ }\xi _{1}\left( p\right) =%
\frac{a+c}{c+1}-p, \\
\xi _{2}\left( p\right) &=&\frac{1}{2}p^{2}-\frac{1}{2}p\frac{2b+3c-2ab+3}{%
c+1}-\frac{\left( a+c\right) \left( a-b-c+ab-3\right) }{\left( c+1\right)
\left( c+2\right) },
\end{eqnarray*}%
and then, $\xi _{0}\left( p_{0}\right) =\xi _{1}\left( p_{0}\right) =0$ and%
\begin{equation*}
\xi _{2}\left( p_{0}\right) =\frac{1}{2}\frac{\left( 1-a\right) \left(
a+c\right) \left( c-2b\right) }{\left( c+1\right) ^{2}\left( c+2\right) }.
\end{equation*}%
For convenience, we always denote $H_{n}\left( p_{0}\right) $ by $H_{n}$ in
the proof below, where $H=\theta $, $\lambda $, $\gamma $, $\xi $, $\eta $.

\textbf{Part 1}: \textbf{Necessity}. If $f_{p_{0}}$ is increasing on $\left(
0,1\right) $, then $g_{p_{0}}\left( x\right) \geq 0$ for all $x\in \left(
0,1\right) $. Then the necessary condition follows from the inequalities%
\begin{equation}
\lim_{x\rightarrow 0}\frac{g_{p_{0}}\left( x\right) }{x^{2}}\geq 0\text{ \
and }g_{p_{0}}\left( 1^{-}\right) \geq 0.  \label{nci>0}
\end{equation}%
Since%
\begin{equation}
\lim_{x\rightarrow 0}\frac{g_{p_{0}}\left( x\right) }{x^{2}}=\xi _{2}\left(
p_{0}\right) =\frac{1}{2}\frac{\left( 1-a\right) \left( a+c\right) \left(
c-2b\right) }{\left( c+1\right) ^{2}\left( c+2\right) },  \label{nL1}
\end{equation}%
the first inequality of (\ref{nci>0}) implies that $c\geq 2b$. By Lemmas \ref%
{L-gp(1)} and \ref{L-gp0(1)}, the second inequality of (\ref{nci>0}) implies
that $a-b+c>p_{0}$ and $a-b+c=p_{0}\leq 1/2$. Therefor, the required
necessary conditions are%
\begin{equation*}
\left( a,b,c\right) \in \left\{ c\geq 2b\right\} \cap \left\{
a-b+c>p_{0}\right\} \cup \left\{ a-b+c=p_{0}\leq 1/2\right\} .
\end{equation*}%
According to Remark \ref{R-dp=0}, we see that%
\begin{equation*}
E_{+}=\left\{ a-b+c=p_{0}\leq 1/2\right\} =\left\{ c\geq 2b\text{, }%
p_{0}=c-b+a\right\} .
\end{equation*}%
We thus obtain the necessary conditions that are%
\begin{equation*}
\left( a,b,c\right) \in \left\{ c\geq 2b\text{, }a-b+c\geq p_{0}\right\} .
\end{equation*}

\textbf{Sufficiency}. Suppose that $c\geq 2b$ and $a-b+c\geq p_{0}$, that
is, $\left( a,b,c\right) \in \left\{ c\geq 2b,\delta \left( p_{0}\right)
\geq 0\right\} $. Then by Lemma \ref{L-sgngn-p0}, $\gamma _{n}\geq 0$ for
all $n\geq 1$. As shown in Lemma \ref{L-xin-rr-a}, $\xi _{n}$ satisfies the
recurrence relation%
\begin{equation}
\eta _{n+1}-\lambda _{n}\eta _{n}=\tau _{n}\gamma _{n}u_{n-1}>0
\label{xin+1-n-n-1-e-1}
\end{equation}%
for $n\geq 2$, where%
\begin{equation}
\eta _{n}=\xi _{n}-\theta _{n-1}\xi _{n-1},  \label{etan}
\end{equation}%
and $\theta _{n},\lambda _{n}>0$ for all $n\geq 1$. Since $\xi _{1}=0$, \ $%
\xi _{2}\geq 0$, so $\eta _{2}=\xi _{2}-\theta _{1}\xi _{1}=\xi _{2}\geq 0$,
by the relation (\ref{xin+1-n-n-1-e-1}) we have%
\begin{equation*}
\eta _{3}-\lambda _{2}\eta _{2}=\tau _{2}\gamma _{2}u_{1}\geq 0,
\end{equation*}%
which implies $\eta _{3}\geq \lambda _{2}\eta _{2}\geq 0$, so $\xi _{3}\geq
\theta _{2}\xi _{2}\geq 0$. Suppose that $\xi _{n}\geq 0$ for $2\leq n\leq m$%
. Using the relation (\ref{xin+1-n-n-1-e-1}) again, we obtain $\eta
_{n+1}-\lambda _{n}\eta _{n}\geq 0$ for $2\leq n\leq m$, which yields%
\begin{equation*}
\eta _{n+1}\geq \lambda _{n}\eta _{n}\geq \left( \lambda _{n}\lambda
_{n-1}\right) \eta _{n-1}\geq \cdot \cdot \cdot \geq \left( \lambda
_{n}\lambda _{n-1}\cdot \cdot \cdot \lambda _{2}\right) \eta _{2}\geq 0.
\end{equation*}%
It then follows that $\eta _{n+1}=\xi _{n+1}-\theta _{n}\xi _{n}\geq 0$, and
then $\xi _{n+1}\geq \theta _{n}\xi _{n}\geq 0$. By induction we arrive at $%
\xi _{n}\geq 0$ for all $n\in \mathbb{N}$, which implies $g_{p_{0}}\left(
x\right) =\sum_{n=0}^{\infty }\xi _{n}\left( p_{0}\right) x^{n}\geq 0$ for $%
x\in \left( 0,1\right) $. It is deduced from (\ref{dfp}) that the function $%
f_{p_{0}}$ is increasing on $\left( 0,1\right) $, and the double inequality (%
\ref{Pm<Hm<CPm-p0}) therefore holds for $x\in \left( 0,1\right) $.

\textbf{Part 2}: If $c\geq 2b$ and $a-b+c<p_{0}$, that is, $\left(
a,b,c\right) \in \left\{ c\geq 2b,\delta \left( p_{0}\right) <0\right\} $,
then by Lemma \ref{L-sgngn-p0}, there is an $n_{1}\geq 1$ such that $\gamma
_{n}\geq 0$ for $1\leq n\leq n_{1}$ and $\gamma _{n}<0$ for $n\geq n_{1}+1$.
By the relation (\ref{xin+1-n-n-1-e-1}), we have%
\begin{equation}
\eta _{n+1}-\lambda _{n}\eta _{n}=\tau _{n}\gamma _{n}u_{n-1}\left\{
\begin{array}{cc}
\geq 0 & \text{ if }1\leq n\leq n_{1}, \\
<0 & \text{if }n\geq n_{1}+1.%
\end{array}%
\right.   \label{etan-rr}
\end{equation}

\textbf{When }$1\leq n\leq n_{1}$. Similarly, $\xi _{2}-\theta _{1}\xi
_{1}=\xi _{2}\geq 0$, and by induction, $\xi _{n}\geq 0$ for $1\leq n\leq
n_{1}+1$.

\textbf{When }$n\geq n_{1}+1$. If there is no negative term in the
subsequence $\left\{ \xi _{n}\right\} _{n\geq n_{1}+2}$, then $\xi _{n}\geq
0 $ for all $n\geq 1$, which implies $\left\{ \xi _{n}\right\} _{n\geq 0}$
is a nonnegative sequence. This implies that the function $g_{p_{0}}\left(
x\right) $ is increasing on $\left( 0,1\right) $, which yields a
contradiction with $g_{p_{0}}\left( 1\right) <0$ by Lemma \ref{L-gp(1)}
(ii). Hence, there are some negative terms in the sequence $\left\{ \xi
_{n}\right\} _{n\geq n_{1}+2}$. Assume that the first negative term is $\xi
_{n_{0}}$ ($n_{0}\geq n_{1}+2$), then we can prove that all terms after that
are negative. In fact, since $\xi _{n_{0}-1}\geq 0$, $\xi _{n_{0}}<0$, by
Lemma \ref{L-xin-rr}, the recurrence relation (\ref{xin-rr-2}) implies that%
\begin{equation*}
\eta _{n_{0}}=\xi _{n_{0}+1}-\theta _{n_{0}}\xi _{n_{0}}=\lambda
_{n_{0}}\left( \xi _{n_{0}}+\theta _{n_{0}-1}\left( -\xi _{n_{0}-1}\right)
\right) +\tau _{n_{0}}\gamma _{n_{0}}u_{n_{0}-1}<0,
\end{equation*}%
which yields $\eta _{n_{0}}<0$, and so $\xi _{n_{0}+1}<\theta _{n_{0}}\xi
_{n_{0}}<0$.

Suppose that $\xi _{n}<0$ for $n_{0}+1\leq n\leq m$. Using the relation (\ref%
{etan-rr}) for $n\geq n_{1}+1$ yields $\eta _{n+1}-\lambda _{n}\eta
_{n}=\tau _{n}\gamma _{n}u_{n-1}<0$ for $n\geq n_{0}+1$. It then follows
that
\begin{equation*}
\eta _{n+1}<\lambda _{n}\eta _{n}<\left( \lambda _{n}\lambda _{n-1}\right)
\eta _{n-1}<\cdot \cdot \cdot <\left( \lambda _{n}\lambda _{n-1}\cdot \cdot
\cdot \lambda _{n_{0}}\right) \eta _{n_{0}}<0,
\end{equation*}%
which, by induction hypothesis, leads to $\xi _{n+1}<\theta _{n}\xi _{n}<0$.
We thus deduce that $\xi _{n}<0$ for $n\geq n_{0}$ by induction. So, $%
\left\{ \xi _{n}\right\} _{n\geq 0}$ is a PN-type sequence, and $%
g_{p_{0}}\left( x\right) =\sum_{n=0}^{\infty }\xi _{n}x^{n}$ is undoubtedly
a PN-type power series. By Proposition \ref{P-sgnS}, there is an $x_{0}\in
\left( 0,1\right) $ such that $g_{p_{0}}\left( x\right) >0$ for $x\in \left(
0,x_{0}\right) $ and $g_{p_{0}}\left( x\right) <0$ for $x\in \left(
x_{0},1\right) $. This, by (\ref{dfp}), shows that function $f_{p_{0}}$ is
increasing on $\left( 0,x_{0}\right) $ and decreasing on $\left(
x_{0},1\right) $. Consequently, the inequality (\ref{Pm<Hm-p0}) holds for $%
x\in \left( 0,1\right) $ if $a-b+c>0$.

\textbf{Part 3}: \textbf{Necessity}. If $f_{p_{0}}$ is decreasing on $\left(
0,1\right) $, then $g_{p_{0}}\left( x\right) \leq 0$ for all $x\in \left(
0,1\right) $. Then the necessary condition follows from the inequalities%
\begin{equation}
\lim_{x\rightarrow 0}\frac{g_{p_{0}}\left( x\right) }{x^{2}}\leq 0\text{ \
and }g_{p_{0}}\left( 1^{-}\right) \leq 0.  \label{nci<0}
\end{equation}%
It is seen from (\ref{nL1}) that the first inequality of (\ref{nci<0})
implies that $c\leq 2b$. By Lemmas \ref{L-gp(1)} and \ref{L-gp0(1)}, the
second inequality of (\ref{nci<0}) implies that $a-b+c<p_{0}$ and $%
a-b+c=p_{0}\geq 1/2$. Therefor, the required necessary conditions are%
\begin{equation*}
\left( a,b,c\right) \in \left\{ c\leq 2b\right\} \cap \left\{
a-b+c<p_{0}\right\} \cup \left\{ a-b+c=p_{0}\geq 1/2\right\} ,
\end{equation*}%
which, by (\ref{E-}) shown in Remark \ref{R-dp=0}, can be simplified to%
\begin{equation*}
\left( a,b,c\right) \in \left\{ c\leq 2b\text{, }a-b+c<p_{0}\right\} .
\end{equation*}%
This proves the necessity.

\textbf{Sufficiency}. Suppose that $c\leq 2b$ and $a-b+c\leq p_{0}$, that
is, $\left( a,b,c\right) \in \left\{ c\leq 2b,\delta \left( p_{0}\right)
\leq 0\right\} $. Applying the same technique as the sufficiency of Part 1,
we easily prove that $\xi _{n}\leq 0$ for all $n\in \mathbb{N}$, which
implies $g_{p_{0}}\left( x\right) =\sum_{n=0}^{\infty }\xi _{n}\left(
p_{0}\right) x^{n}\leq 0$ for $x\in \left( 0,1\right) $. It is deduced from (%
\ref{dfp}) that the function $f_{p_{0}}$ is decreasing on $\left( 0,1\right)
$.

\textbf{Part 4}: If $c\leq 2b$ and $a-b+c>p_{0}$, that is, $\left(
a,b,c\right) \in \left\{ c\leq 2b,\delta \left( p_{0}\right) >0\right\} $.
Then by Lemma \ref{L-sgngn-p0}, there is an $n_{1}\geq 1$ such that $\gamma
_{n}\leq 0$ for $1\leq n\leq n_{1}$ and $\gamma _{n}>0$ for $n\geq n_{1}+1$.
By the relation (\ref{xin+1-n-n-1-e-1}), we have%
\begin{equation*}
\eta _{n+1}-\lambda _{n}\eta _{n}=\tau _{n}\gamma _{n}u_{n-1}\left\{
\begin{array}{cc}
\leq 0 & \text{ if }1\leq n\leq n_{1}, \\
>0 & \text{if }n\geq n_{1}+1.%
\end{array}%
\right.
\end{equation*}%
In a similar way as Part 2, we can prove that the sequence $\left\{ \xi
_{n}\right\} _{n\geq 0}$ is an NP-type sequence, and therefore, $%
g_{p_{0}}\left( x\right) =\sum_{n=0}^{\infty }\xi _{n}\left( p_{0}\right)
x^{n}$ is an NP-type power series. Note that $g_{p_{0}}\left( 1\right)
=\infty $ due to Lemma \ref{L-gp(1)} (i). By Proposition \ref{P-sgnS}, there
is an $x_{0}\in \left( 0,1\right) $ such that $g_{p_{0}}\left( x\right) <0$
for $x\in \left( 0,x_{0}\right) $ and $g_{p_{0}}\left( x\right) >0$ for $%
x\in \left( x_{0},1\right) $. This, by (\ref{dfp}), shows that function $%
f_{p_{0}}$ is decreasing on $\left( 0,x_{0}\right) $ and increasing on $%
\left( x_{0},1\right) $. Consequently, the inequality (\ref{Pm<Hm-p0}) holds
for $x\in \left( 0,1\right) $.

This completes the proof.
\end{proof}

\section{Proofs of Theorems \protect\ref{MT-1} and \protect\ref{MT-2}}

We are now in a position to prove Theorem \ref{MT-1}.

\begin{proof}[Proof of Theorem \protect\ref{MT-1}]
Let%
\begin{eqnarray*}
Nc_{1} &=&\lim\limits_{x\rightarrow 0}\dfrac{a^{-1}\ln F\left(
-a,b;c;x\right) -\ln M_{p_{0}}\left( 1-x,b/c\right) }{x^{3}}, \\
Nc_{2} &=&\lim\limits_{x\rightarrow 1}\frac{\left[ F\left( -a,b;c;x\right) %
\right] ^{1/a}}{M_{p_{0}}\left( 1-x,b/c\right) }.
\end{eqnarray*}%
Direct computations yield%
\begin{eqnarray*}
Nc_{1} &=&\dfrac{b\left( 1-a\right) \left( a+c\right) \left( c-b\right) }{%
6c^{2}\left( c+1\right) ^{2}\left( c+2\right) }\left( c-2b\right) , \\
Nc_{2} &=&\left( 1-\dfrac{b}{c}\right) ^{-1/p_{0}}\left[ \dfrac{\Gamma
\left( c\right) \Gamma \left( c-b+a\right) }{\Gamma \left( c+a\right) \Gamma
\left( c-b\right) }\right] ^{1/a}.
\end{eqnarray*}

(i) In the case of $a-b+c\geq p_{0}$. The necessary conditions for (\ref%
{Hm>Pm}) to hold follow from the inequality $Nc_{1}\geq 0$, which gives $%
c\geq 2b$.

Suppose that $c\geq 2b$. Using Theorem \ref{MT-m} (i) yields%
\begin{equation*}
M_{p_{0}}\left( 1-x,\frac{b}{c}\right) \leq \left[ F\left( -a,b;c;x\right) %
\right] ^{1/a},
\end{equation*}%
which proves the sufficiency.

(ii) In the case of $a-b+c<p_{0}$. The first necessary condition must be: $%
c-b+a>0$. If not, that is, $c-b\leq -a$, which, due to $c>b>0$, implies that
$a<0$, then by the third formulas of (\ref{F-near1}) it yields that $%
\lim\limits_{x\rightarrow 1}\left[ F\left( -a,b;c;x\right) \right] ^{1/a}=0$%
. This leads to $Nc_{2}=0$. Under this necessary condition, solving the
inequalities $Nc_{1}\geq 0$ gives $c\geq 2b$. Note that $Nc_{2}=C_{p_{0}}$,
where $C_{p_{0}}$ is defined by (\ref{Cp}). By Remark \ref{R-Cp0-1}, $%
Nc_{2}=C_{p_{0}}\geq 1$ if and only if $p_{0}\leq p_{0}^{\ast }$. This
proves the necessity.

Suppose that $c-b+a>0$, $c\geq 2b$ and $p_{0}\leq p_{0}^{\ast }$. Using
Theorem \ref{MT-m} (ii) we have%
\begin{equation*}
\min \left\{ 1,C_{p_{0}}\right\} M_{p_{0}}\left( 1-x,\frac{b}{c}\right) \leq %
\left[ F\left( -a,b;c;x\right) \right] ^{1/a}
\end{equation*}%
for $x\in \left( 0,1\right) $. By Remark \ref{R-Cp0-1}, we see that $\min
\left\{ 1,C_{p_{0}}\right\} =1$ if $p_{0}\leq p_{0}^{\ast }$, which proves
the sufficiency, and the proof is done.
\end{proof}

Now let us prove Theorem \ref{MT-2}.

\begin{proof}[Proof of Theorem \protect\ref{MT-2}]
Let $Nc_{i}$ ($i=1,2$) be as in the proof of Theorem \ref{MT-1}. The
necessary conditions follows from the inequalities%
\begin{equation*}
\left\{
\begin{array}{l}
Nc_{1}=\dfrac{b\left( 1-a\right) \left( a+c\right) \left( c-b\right) }{%
6c^{2}\left( c+1\right) ^{2}\left( c+2\right) }\left( c-2b\right) \leq 0, \\
Nc_{2}=\left( 1-\dfrac{b}{c}\right) ^{-1/p_{0}}\left[ \dfrac{\Gamma \left(
c\right) \Gamma \left( c-b+a\right) }{\Gamma \left( c+a\right) \Gamma \left(
c-b\right) }\right] ^{1/a}\leq 1.%
\end{array}%
\right.
\end{equation*}%
(i) In the case of $a-b+c\leq p_{0}$. The necessary conditions for the
inequality (\ref{Hm<Pm}) holds for $x\in \left( 0,1\right) $ follow from the
inequality $Nc_{1}\leq 0$, which gives $p\geq p_{0}$ and $c\leq 2b$.

Suppose that $c\leq 2b$. Then using Theorem \ref{MT-m} (iii) yields%
\begin{equation*}
\left[ F\left( -a,b;c;x\right) \right] ^{1/a}\leq M_{p_{0}}\left( 1-x,\frac{b%
}{c}\right) ,
\end{equation*}%
which proves the sufficiency.

(ii) In the case of $a-b+c>p_{0}$. The first necessary condition for the
inequality (\ref{Hm<Pm}) to hold for $x\in \left( 0,1\right) $ follows from
the inequality $Nc_{1}\leq 0$, which gives $c\leq 2b$. Since $%
Nc_{2}=C_{p_{0}}$, where $C_{p_{0}}$ is defined by (\ref{Cp}). By Remark \ref%
{R-Cp0-1}, $Nc_{2}=C_{p_{0}}\leq 1$ if and only if $p_{0}\geq p_{0}^{\ast }$%
. This proves the necessity.

Suppose that $c\leq 2b$ and $p_{0}\geq p_{0}^{\ast }$. Using Theorem \ref%
{MT-m} (iv) we have%
\begin{equation*}
\left[ F\left( -a,b;c;x\right) \right] ^{1/a}\leq \max \left\{
1,C_{p_{0}}\right\} M_{p_{0}}\left( 1-x,\frac{b}{c}\right)
\end{equation*}%
for $x\in \left( 0,1\right) $. It is seen by Remark \ref{R-Cp0-1} that $\max
\left\{ 1,C_{p_{0}}\right\} =1$ if $p_{0}\geq p_{0}^{\ast }$, which proves
the sufficiency, thereby completing the proof.
\end{proof}

\section{Conclusions}

In this paper, using the sign rule of NP or PN-type power series and a
recurrence relation, we investigated the monotonicity of the ratio of
hypergeometric mean and power mean and proved our most important Theorems 1
and 2, which can also be seen in the following table (\textquotedblleft $+$%
\textquotedblright\ and \textquotedblleft $-$\textquotedblright\ means
\textquotedblleft $\geq 0$\textquotedblright\ and \textquotedblleft $\leq 0$%
\textquotedblright , respectively).%
\begin{equation*}
\begin{tabular}{|l|l|l|l|l|l|}
\hline
& $c-2b$ & $a-b+c-p_{0}$ & $f_{p_{0}}$ & $p_{0}-p$ & $f_{p}$ \\ \hline
& $+$ & $+$ & $\nearrow $ & $+$ & $\nearrow $ \\ \hline
& $+$ & $-$ & $\nearrow \searrow $ &  &  \\ \hline
& $-$ & $-$ & $\searrow $ & $-$ & $\searrow $ \\ \hline
& $-$ & $+$ & $\searrow \nearrow $ &  &  \\ \hline
\end{tabular}%
\end{equation*}%
From the monotonicity pattern of $f_{p_{0}}$ on $\left( 0,1\right) $, we
deduced the necessary and sufficient conditions for the first Hm-Pm
inequalities in all possible cases, and provide the first type of power mean
bounds (sharp at $x=0$) for the hypergeometric means. These greatly improve
and perfectly complement Barnard and Richards' results.

Finally, it should be emphasize that our two tools, Propositions \ref{P-sgnS}
and \ref{P-PH-rr}, play a crucial role in dealing with the problem on the
first type of power mean bounds for the hypergeometric mean. In our
forthcoming papers, we will give an answer to Conjecture \ref{Cj-BRT} by
virtue of the two tools.


\begin{thebibliography}{99}
\bibitem{Carlson-PAMS-16-1965} B. C. Carlson, A hypergeometric mean value,
\emph{Proc. Amer. Math. Soc.} \textbf{16} (1965), no. 4, 759--766.

\bibitem{Carlson-JMAA07-1963} B. C. Carlson, Lauricella'sh hypergeometric
function $F_{D}$, \emph{J. Math. Anal. Appl.} \textbf{7} (1963), 452--470.

\bibitem{Carlson-PAMS-16-1966} B. C. Carlson, Some inequalities for
hypergeometric functions, \emph{Proc. Amer. Math. Soc.} \textbf{16} (1966),
32--39.

\bibitem{Borwein-PA-NY-1974} J. M. Borwein and P. B. Borwein, \emph{Pi and
the AGM--A Study in Analytic Number Theory and Computational Complexity},
John Wiley, New York, 1987.

\bibitem{Almkvist-AMM-95-1988} G. Almkvist and B. Berndt, Gauss, Landen,
Ramanujan, the arithmetic-geometric mean, ellipses, pi, and the Ladies
Diary, \emph{Amer. Math. Monthly} \textbf{95} (1988), 585--608.

\bibitem{Barnard-JMA-31-2000} R.W. Barnard, K. Pearce, and K.C. Richards, An
inequality involving the generalized hypergeometric function and the arc
length of an ellipse, \emph{SIAM J. Math. Anal.} \textbf{31} (2000),
693--699.

\bibitem{Barnard-JMA-32-2000} R.W. Barnard, K. Pearce, and K.C. Richards, A
monotonicity property involving $_{3}F_{2}$ and comparisons of the classical
approximations of elliptical arc length, \emph{SIAM J. Math. Anal.} \textbf{%
32} (2000), 403--419.

\bibitem{Alzer-JCAM-172-2004} H. Alzer and S.-L. Qiu, Monotonicity theorems
and inequalities for the complete elliptic integrals, \emph{J. Comput. Appl.
Math.} \textbf{172} (2004), no. 2, 289--312.

\bibitem{Chu-RM-61-2012} Y.-M. Chu, M.-K. Wang, Optimal Lehmer mean bounds
for the Toader mean, \emph{Results Math.} \textbf{61} (2012), 223--229.

\bibitem{Yang-JMAA-438-2016} Z.-H. Yang, Y.-M. Chu, and W. Zhang, Accurate
approximations for the complete elliptic integral of the second kind, \emph{%
J. Math. Anal. Appl.} \textbf{438} (2016), 875--888.

\bibitem{Yang-JIA-2017-106} Z.-H. Yang, W.-M. Qian, Y.-M. Chu and W. Zhang,
Monotonicity rule for the quotient of two functions and its application,
\emph{J. Inequal. Appl.} \textbf{2017} (2017): 106.
%DOI 10.1186/s13660-017-1383-2.

\bibitem{Yang-JMAA-462-2018} Z.-H. Yang, W.-M. Qian, Y.-M. Chu, W. Zhang, On
approximating the arithmetic-geometric mean and complete elliptic integral
of the first kind, \emph{J. Math. Anal. Appl.} \textbf{462} (2018),
1714--1726.%, https://doi.org/10.1016/j.jmaa.2018.03.005.

\bibitem{Yang-JMAA-467-2018} Z.-H. Yang, Sharp approximations for the
complete elliptic integrals of the second kind by one-parameter means, \emph{%
J. Math. Anal. Appl.} \textbf{467} (2018), 446--461, 446--461. Available
online at https://doi.org/10.1016/j.jmaa.2018.07.020

\bibitem{Yang-AMC-348-2019} Z.-H. Yang, Y.-M. Chu and W. Zhang, High
accuracy asymptotic bounds for the complete elliptic integral of the second
kind, \emph{Appl. Math. Comput.} \textbf{348} (2019), 552--564.
%https://doi.org/10.1016/j.amc.2018.12.025.

\bibitem{Barnard-CMFT-1-2001} R. W. Barnard and K. C. Richards, A note on
the hypergeometric mean value, \emph{Comput. Meth. Func. Theory} \textbf{1}
(2001), no. 1, 81--88.

\bibitem{Richards-JMAA-308-2005} K. C. Richards, Sharp power mean bounds for
the Gaussian hypergeometric function, \emph{J. Math. Anal. Appl.} \textbf{308%
} (2005), 303--313. %doi:10.1016/j.jmaa.2005.01.018

\bibitem{Barnard-JIPAM-8-2007} R. W. Barnard and K. C. Richards, On
inequalities for hypergeometric analogues of the arithmetic-geometric mean,
\emph{J. Inequal. Pure Appl. Math.}, \textbf{8} (2007), no. 3, Art. 65.

\bibitem{Barnard-JMI-4-2010} R. W. Barnard, K. C. Richards and H. C.
Tiedeman, A survey of some bounds for Gauss' hypergeometric function and
related to mean, \emph{J. Math. Inequal.} \textbf{4} (2010), no. 1, 45--52.
DOI: dx.doi.org/10.7153/jmi-04-06

\bibitem{Pales-JMAA-131-1988} Z. P\'{a}les, Inequalities for differences of
powers, \emph{J. Math. Anal. App.} \textbf{131} (1988), 271--281.

\bibitem{Ling-AMM-81-1974} T.-P. Lin, The power mean and the logarithmic
mean, \emph{Amer. Math. Monthly}, \textbf{81 }(1974), 879--883.

\bibitem{Yang-JIA-2008-149286} Z.-H. Yang, On the monotonicity and
log-convexity of a four-parameter homogeneous mean, \emph{J. Inequal. Appl.}
\textbf{2008} (2008), Article ID 149286, 12 pages. %doi:10.1155/2008/149286

\bibitem{Yang-JIA-2013-116} Z.-H. Yang, New sharp bounds for logarithmic
mean and identric mean, \emph{J. Inequal. Appl.} \textbf{2013} (2013): 116.
%doi:10.1186/1029-242X-2013-116

\bibitem{Pittinger-UBPEFSMF-680-1980} A. O. Pittinger, Inequalities between
arithmetic and logarithmic means, \emph{Univ. Beogard Publ. Elektr. Fak.
Ser. Mat. Fiz} \textbf{680} (1980), 15--18.

\bibitem{Yang-MIA-10-2007} Z.-H. Yang, On the log-convexity of two-parameter
homogeneous functions, \emph{Math. Inequal. Appl.} \textbf{10 }(2007), no.
3, 499--516. %DOI: 10.7153/mia-10-47.

\bibitem{Yang-RACSAM-115-2021-8} Z.-H. Yang, J.-F. Tian, Y.-R. Zhu, A sharp
lower bound for the complete elliptic integrals of the first kind, \emph{%
Rev. R. Acad. Cienc. Exactas F\'{\i}s. Nat. Ser. A Mat.} \textbf{115}
(2021): 8. https://doi.org/10.1007/s13398-020-00949-6

\bibitem{Yang-AADM-13-2019} Z.-H. Yang and J. Tian, Convexity and
monotonicity for elliptic integrals of the first kind and applications,
\emph{Appl. Anal. Discrete Math.} \textbf{13} (2019), 240--260.
%https://doi.org/10.2298/AADM171015001Y.

\bibitem{Yang-arXiv-1705.05703} Z.-H. Yang, J. Tian, Convexity and
monotonicity for the elliptic integrals of the first kind and applications,
arXiv:1705.05703 [math.CA]. https://doi.org/10.48550/arXiv.1705.05703

\bibitem{Belzunce-I:M-E-40-2007} F. Belzunce, E. Ortega, and J. M. Ruiz, On
non-monotonic ageing properties from the Laplace transform, with actuarial
applications, \emph{Insurance Math. Econom.} \textbf{40} (2007), 1--14.

\bibitem{Yang-JIA-157-2015} Z.-H. Yang, Y.-M. Chu and X.-H. Zhang, Necessary
and sufficient conditions for functions involving the psi function to be
completely monotonic, \emph{J. Inequal. Appl.} \textbf{2015} (2015): 157.
%DOI: 10.1186/s13660-015-0674-8.

\bibitem{Yang-JIA-299-2015} Z.-H. Yang and Y.-M. Chu, Inequalities for
certain means in two arguments, \emph{J. Inequal. Appl.} \textbf{2015}
(2015): 299. %doi:10.1186/s13660-015-0828-8.

\bibitem{Yang-AAA-2014-702718} Z.-H. Yang, Y.-M. Chu, and X.-J, Tao, A
double inequality for the trigamma function and its applications, \emph{%
Abstr. Appl. Anal. }\textbf{2014} (2014), Art. ID 702718, 9 pages.
%http://dx.doi.org/10.1155/2014/702718.

\bibitem{Yang-JIA-2017-210} Z.-H. Yang, W.-M. Qian, Y.-M. Chu, W. Zhang, On
rational bounds for the gamma function, \emph{J. Inequal. Appl.} \textbf{2017%
} (2017): 210. %DOI: 10.1186/s13660-017-1484-y.

\bibitem{Yang-JMI-12-2018} Z.-H. Yang and J. Tian, Monotonicity and sharp
inequalities related to gamma function, \emph{J. Math. Inequal.} \textbf{12}
(2018), no. 1, 1--22. %doi:10.7153/jmi-2018-12-01.

\bibitem{Yang-MIA-21-2018-2} Z.-H. Yang, W.-M. Qian, Y.-M. Chu, W. Zhang, On
approximating the error function, \emph{Math. Inequal. Appl.} \textbf{21}
(2019), no. 2, 469--479. %doi:10.7153/mia-2018-21-32

\bibitem{Yang-MIA-21-2018-4} Z.-H. Yang, W.-M. Qian and Y.-M. Chu,
Monotonicity properties and bounds involving the complete elliptic integrals
of the first kind, \emph{Math. Inequal. Appl.} \textbf{21} (2018), no. 4,
1185--1199. %doi:10.7153/mia-2018-21-82.

\bibitem{Yang-JMAA-470-2019} Z. Yang and J.-F. Tian, Monotonicity rules for
the ratio of two Laplace transforms with applications, \emph{J. Math. Anal.
Appl.} \textbf{470} (2019), 821--845.
%https://doi.org/10.1016/j.jmaa.2018.10.034.

\bibitem{Yang-arxiv-2204.04709} Z.-H. Yang, Recurrence relations of
coefficients involving hypergeometric function with an application, arXiv:
2204.04709 [math.CA]. https://doi.org/10.48550/arXiv.2204.04709

\bibitem{Yang-RACSAM-115-2021} Z.-H. Yang and J.-F. Tian, Sharp bounds for
the Toader mean in terms of arithmetic and geometric means, \emph{Rev. R.
Acad. Cienc. Exactas F\'{\i}s. Nat. Ser. A Mat. RACSAM} \textbf{115}, 99
(2021). Available online at https://doi.org/10.1007/s13398-021-01040-4.

\bibitem{Zhao-RACSAM-117-2023} T. H. Zhao, M. K. Wang, Sharp bounds for the
lemniscatic mean by the weighted Holder mean, \emph{Rev. R. Acad. Cienc.
Exactas F\'{\i}s. Nat. Ser. A Mat. RACSAM} \textbf{117}, 96 (2023).
https://doi.org/10.1007/s13398-023-01429-3

\bibitem{Abramowitz-HMFFGMT-1972} M. Abramowitz and I. A. Stegun (Eds),
\emph{Handbook of Mathematical Functions with Formulas, Graphs, and
Mathematical Tables}, National Bureau of Standards, Applied Mathematics
Series 55, 10th printing, Dover Publications, New York and Washington, 1972.

\bibitem{Andrews-SF-CUP-1999} G.E. Andrews, R. Askey, R. Roy, \emph{Special
Functions}, Cambridge Univ. Press, Cambridge, 1999.
\end{thebibliography}
\end{document}